\documentclass[11pt]{amsart}

\title{Hyperbolicity Beyond Convexity in the Alexandrov--Fenchel Inequality}

\author{Leo Brauner}
	\address{Institut f\"ur Mathematik \newline%
		\indent Goethe-Universit\"at Frankfurt \newline%
		\indent Robert-Mayer-Str. 10, 60325 Frankfurt am Main, Germany}
	\email{brauner@math.uni-frankfurt.de}
	
	\author{Oscar Ortega-Moreno}
	\address{Departamento de Matem\'aticas
        \newline%
		\indent CUNEF Universidad
        \newline%
		\indent Madrid, Spain}
	\email{oscar.ortegamoreno@cunef.edu}
    
\usepackage[english]{babel}
\usepackage[T1]{fontenc}
\usepackage{lmodern}
\usepackage{amsmath,amsthm,amssymb,amsfonts,mathrsfs}
\usepackage{thmtools}
\usepackage[abbrev,nobysame]{amsrefs}
\usepackage{float}
\usepackage{hyperref}
\usepackage[capitalise]{cleveref}
\usepackage[left=3.5cm,right=3.5cm,top=3cm,bottom=3cm,includeheadfoot]{geometry}
\usepackage{xcolor}
\usepackage{xcolor}
\usepackage{enumerate,enumitem}
\usepackage{bbm}
\usepackage{enumitem}
\usepackage{bm}

\hypersetup{
	colorlinks=true,
	linkcolor=teal,
	citecolor=orange
}

\BibSpec{article}{%
	+{}{\PrintAuthors}  		{author}
	+{,}{ \textit}     		{title}
	+{,}{ }             		{journal}
	+{}{ \textbf}       		{volume}
	+{}{ \parenthesize} 		{date}
	+{}{, no. } 		{number}
	+{,}{ }      	      		{conference}
	+{,}{ }      	      		{book}
	+{,}{ }            		{pages}
	+{,}{ }            	 	{note}
	+{,}{ }            	 	{status}
	+{,}{  \texttt } {eprint}
	+{.}{}              {transition}
}

\BibSpec{book}{%
	+{}{\PrintAuthors}  {author}
	+{,}{ \textit}      {title}
	+{,}{ }             {publisher}
	+{,}{ }             {place}
	+{,}{ }             {date}
	+{.}{}              {transition}
}

\BibSpec{incollection}{%
	+{}  {\PrintAuthors}                {author}
	+{,} { \textit}                     {title}
	+{.} { }                            {part}
	+{:} { \textit}                     {subtitle}
	+{,} { \PrintContributions}         {contribution}
	+{,} { \PrintConference}            {conference}
	+{,} { }                            {booktitle}
	+{,} { pp.~}                        {pages}
	+{,}{ }       		                {series}
	+{}{ \textbf}                         {volume}
	+{,} { }                            {publisher}
	+{,} { }                 {date}
	+{,} { }                            {status}
	+{,} { \PrintDOI}                   {doi}
	+{,} { available at \eprint}        {eprint}
	+{}  { \parenthesize}               {language}
	+{}  { \PrintTranslation}           {translation}
	+{;} { \PrintReprint}               {reprint}
	+{.} { }                            {note}
	+{.} {}                             {transition}
}

\theoremstyle{plain}
\newtheorem{thm}{Theorem}[section]
\newtheorem*{thm*}{Theorem}
\newtheorem{lem}[thm]{Lemma}
\newtheorem{prop}[thm]{Proposition}

\crefname{thm}{Theorem}{Theorems}
\crefname{lem}{Lemma}{Lemmas}
\crefname{prop}{Proposition}{Propositions}
\crefname{cor}{Corollary}{Corollaries}

\newtheorem{thmintro}{Theorem}
\newtheorem*{thmintro*}{Theorem}

\newtheorem*{corintro*}{Corollary}

\crefname{thmintro}{Theorem}{Theorems}
\crefname{thmintro*}{Theorem}{Theorems}
\crefname{corintro}{Corollary}{Corollaries}
\crefname{corintro*}{Corollary}{Corollaries}

\theoremstyle{definition}
\newtheorem{defi}[thm]{Definition}

\crefname{defi}{Definition}{Definitions}

\theoremstyle{remark}
\newtheorem{exl}[thm]{Example}
\newtheorem{rem}[thm]{Remark}

\crefname{exl}{Example}{Examples}
\crefname{rem}{Remark}{Remarks}

\numberwithin{equation}{section}
\crefname{equation}{}{}

\newcommand{\N}{\mathbb{N}}
\newcommand{\R}{\mathbb{R}}
\renewcommand{\S}{\mathbb{S}}

\newcommand{\K}{\mathcal{K}}

\newcommand{\MSO}{\mathrm{M}}

\newcommand{\BB}{\mathbb{B}}

\newcommand{\pair}[2]{\langle #1,#2 \rangle}

\newcommand{\Sym}{\mathrm{Sym}}

\DeclareMathOperator{\tr}{tr}

\DeclareMathOperator{\diag}{diag}
\DeclareMathOperator{\rad}{rad}

\newcommand{\D}{\mathsf{D}}
\newcommand{\V}{\mathsf{V}}

\begin{document}

    \begin{abstract}
    	We establish an Alexandrov--Fenchel type inequality in which one of the
    	reference bodies is replaced by a function whose Hessian satisfies a balancing condition on its eigenvalues, without being required to be positive semidefinite.
    	The proof hinges on a new extension of Alexandrov's inequality for mixed
    	discriminants. As applications, we derive a log-concavity principle for
    	linear functionals of area measures, answering a question of Colesanti,
    	Hug, and Saorín-Gómez, as well as Brunn--Minkowski type inequalities for
    	intrinsic volumes of mean section bodies, answering a question of Schuster.
	\end{abstract}
	
    \maketitle

    \thispagestyle{empty}

    \section{Introduction}
    
    Let $\K^n$ denote the space of convex bodies, that is, nonempty compact convex subsets of Euclidean space $\R^n$. A classical theorem of Minkowski states that the volume of a Minkowski combination is a homogeneous polynomial of degree $n$: for all $m\in\N$, $K_1,\ldots,K_m\in\K^n$, and $\lambda_1,\ldots,\lambda_m\geq 0$,
    \begin{equation*}
    	\mathrm{vol}_n(\lambda_1K_1+\cdots+\lambda_mK_m)
    	=
    	\sum_{i_1,\ldots,i_n\in[m]}
    	\V(K_{i_1},\ldots,K_{i_n})
    	\lambda_{i_1}\cdots\lambda_{i_n}.
    \end{equation*}
    The coefficients $\V(K_{i_1},\ldots,K_{i_n})$ are the \emph{mixed volumes}. Many of the fundamental geometric quantities associated with a convex body arise as special cases of mixed volumes. In particular, volume, surface area, and mean width are obtained in this way, and more generally the intrinsic volumes are, up to dimensional constants, given by $\V(K^{[j]},{\BB}^{[n-j]})$, where $K$ appears $j$ times and the Euclidean unit ball $\BB$ appears $n-j$ times.  We refer to Schneider's monograph~\cite{Schneider2014} for a comprehensive account of the theory of mixed volumes.
    
    Inequalities between mixed volumes encode a wide range of geometric information about convex bodies. Arguably the most fundamental among these inequalities is the Alexandrov--Fenchel inequality. First proved by Alexandrov~\cites{Aleksandrov1937,Alexandroff1938}, it states that for all $K,L,M_1,\ldots,M_{n-2}\in\K^n$,
    \begin{equation}\label{eq:Alexandrov-Fenchel_ineq}
    	\V(K,L,M_1,\ldots,M_{n-2})^2
    	\geq
    	\V(K,K,M_1,\ldots,M_{n-2})\,
    	\V(L,L,M_1,\ldots,M_{n-2}).
    \end{equation}
    The importance of this inequality lies in the range of classical results that follow from it such as the Brunn--Minkowski inequality, Minkowski's inequalities for mixed volumes, and the isoperimetric inequality for convex bodies. 
    
    Despite its classical origins, the inequality remains a focus of current research. For instance, Shenfeld and van Handel~\cite{Shenfeld2019} gave a concise modern proof, and their subsequent study of its equality cases led to two major breakthroughs~\cites{Shenfeld2020,Shenfeld2022}, although a complete characterization remains open in full generality (see also \cites{Hug2024c} for a generalization of these results to a broader class of bodies). Extensions of the inequality continue to be actively studied as well; for instance, see \cites{Kotrbaty2021,Kotrbaty2022,Bernig2024} for the development of Hodge--Riemann relations in the setting of valuations on convex bodies.
    
    A fundamental point of view, going back to the work of Minkowski, is to regard the Alexandrov--Fenchel inequality as a statement about quadratic forms: for fixed reference bodies $M_1,\ldots,M_{n-2}$, the Alexandrov--Fenchel inequality may be viewed as a statement about the quadratic form
    \[
    (h_K,h_L)\mapsto \V(K,L,M_1,\ldots,M_{n-2}),
    \] 
    where $h_K$  denotes the support function of $K\in \K^n$. Hilbert~\cite{Hilbert1912} developed this perspective further, providing an analytic method to it in dimension $n=3$.
    More precisely, Hilbert observed that, the inequality can be reduced to a statement about the signature of the associated quadratic form. His key idea was to show that this quadratic form is hyperbolic: it is positive on some line and negative semidefinite on a codimension-one subspace of $C^2(\S^{n-1})$. Thus, the Alexandrov--Fenchel inequality can be viewed as a consequence of this spectral signature. The same idea appears in Alexandrov's proof of the general inequality via his mixed discriminant inequality~\cite{Alexandroff1938}, as well as in the more recent proof of Shenfeld and van Handel~\cite{Shenfeld2019}.
    
    The question we address in this paper concerns extending the Alexandrov--Fenchel
    inequality beyond mixed volumes of convex bodies, a topic on which relatively little appears to be known apart from a few very special results and a general structural theorem; see, e.g.~\cites{Abardia2015,Ross2025}. More precisely, we
    ask whether some of the reference bodies can be replaced by general  functions on $\S^{n-1}$ while preserving the hyperbolicity of the resulting quadratic form. To make this precise, recall that every $f\in C^2(\S^{n-1})$ can be written
    as a difference of two support functions. Minkowski multilinearity therefore
    extends the mixed volume uniquely to a multilinear form on
    $C^2(\S^{n-1})$. Consequently, we may consider the quadratic form
    $$
    (f,g)\mapsto \V(f,g,\zeta,M_2,\ldots,M_{n-2}),
    $$
    where a convex body is replaced by an arbitrary $C^2$ function, and ask for which functions $\zeta$ it is hyperbolic. The main purpose of this article is to show that this is possible under a spectral condition on the Hessian of $\zeta$, provided that some of the remaining reference bodies are Euclidean balls.
    
    To state our main result, we introduce one piece of notation. For
    $\zeta\in C^2(\S^{n-1})$ and $u\in\S^{n-1}$, let $D^2\zeta(u)$ denote the Hessian of the one-homogeneous extension of $\zeta$, restricted to
    $u^\perp$. Recall that $\zeta$ is a support function if and only if
    $D^2\zeta(u)$ is positive semidefinite at every point $u\in\S^{n-1}$. 
    
    We now introduce the spectral condition that will allow us to extend this hyperbolicity phenomenon beyond the setting of convexity. If $Z$ is a symmetric matrix with ordered eigenvalues $ \lambda_1\leq\cdots\leq\lambda_n$ (counted with multiplicity),    
    we say that $Z$ is \emph{spectrally $k$-balanced} if
    $$
    \lambda_1+\cdots+\lambda_{k+1}\geq\lambda_n.
    $$
    Roughly speaking, this condition requires the smaller eigenvalues, taken collectively, to be large enough to control the largest one. We can now state our main result.
    
    \begin{thmintro}\label{thmintro:Alexandrov-Fenchel}
    	Let $2<j\leq n-1$ and $\zeta\in C^2(\S^{n-1})$ be such that, for every
    	$u\in\S^{n-1}$, $D^2\zeta(u)$ is positive semidefinite or
    	spectrally $(n-j)$-balanced. Then, for all
    	$K,L\in\K^n$ and $\bm M=(M_1,\ldots,M_{j-2})\subseteq\K^n$,
    	\begin{align}\label{eq:thmintro:Alexandrov-Fenchel}
    		\V(K,L,\bm M,\zeta,{\BB}^{[n-j-1]})^2
    		&\geq
    		\V(K,K,\bm M,\zeta,{\BB}^{[n-j-1]})
    		\V(L,L,\bm M,\zeta,{\BB}^{[n-j-1]}).
    	\end{align}
    \end{thmintro}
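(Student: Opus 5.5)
We follow Alexandrov's route through mixed discriminants, as the abstract indicates. By approximation and multilinearity it suffices to treat $K,L,M_i$ with smooth support functions of positive curvature. For $C^2$ functions the mixed volume is an integral
\[
\V(f_1,\ldots,f_n)=\frac1n\int_{\S^{n-1}} f_1\,\D\bigl(D^2f_2,\ldots,D^2f_n\bigr)\,du,
\]
where $\D$ is the mixed discriminant of $(n-1)\times(n-1)$ symmetric matrices. Write $A_i=D^2h_{M_i}$ and $Z=D^2\zeta$. The Hessian of the ball is the identity. The quadratic form to study is then
\[
Q(f,g)=\int f\,\D\bigl(D^2g,A_1,\ldots,A_{j-2},Z,I^{[n-j-1]}\bigr)\,du .
\]

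\textbf{Step 1 (pointwise algebra).} This is the key new ingredient, an extension of Alexandrov's mixed discriminant inequality. Take positive definite $A_1,\ldots,A_{j-2}$ and a matrix $Z$ that is either positive semidefinite or spectrally $(n-j)$-balanced; note the identity appears $n-j-1$ times in dimension $n-1$. The claim is that the form $X\mapsto \D(X,X,A_1,\ldots,A_{j-2},Z,I^{[n-j-1]})$ is Lorentzian. That is, it is positive at some positive definite $X$, and it satisfies $\D(X,Y,\ldots)^2\ge \D(X,X,\ldots)\D(Y,Y,\ldots)$ for positive definite $X$.

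\begin{itemize}
\item If $Z$ is positive semidefinite, this is the classical Alexandrov inequality.
\item Otherwise, use the fact that $\D(\cdot,\ldots,Z,I^{[m]})$ with $m=n-j-1$ is an elementary symmetric (hyperbolic) polynomial structure. Specifically, $\D(X_1,\ldots,X_{n-m-1},I^{[m]})$ is, up to constant, the polarization of $\sigma_{n-1-m}$ of the matrix. Combined with one further argument $Z$, we need the form to be a $\sigma_{k}$-type hyperbolic polynomial $X\mapsto \sigma_{k}(X)$ polarized in the direction $Z$.
\item Diagonalize $Z$. Gårding's theory says polarizing a hyperbolic polynomial with respect to a vector in the hyperbolicity cone preserves hyperbolicity. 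The $\sigma_{j-1}$ cone $\Gamma_{j-1}$ in $\R^{n-1}$ is larger than the positive orthant, so it suffices to check that the eigenvalue vector of $Z$ lies in the closure of the Gårding cone of $\sigma_{n-1-m}=\sigma_{j}$ (appropriately indexed).
\item The condition $\lambda_1+\cdots+\lambda_{k+1}\ge\lambda_{n}$ should be exactly, or sufficient for, the required membership. I expect to verify it by showing $\sigma_\ell(\lambda)\ge0$ for the relevant range, using ordering and induction on the number of negative eigenvalues. Only $\lambda_1$ can be very negative once the balance holds.
\end{itemize}

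I expect this membership verification, and the correct index bookkeeping, to be the main obstacle. If Gårding cone membership fails to be implied, I would instead prove hyperbolicity directly by induction on dimension à la Alexandrov: restrict to $Z$ diagonal, expand $\D$ along the diagonal using $\D(\ldots,E_{ii},\ldots)$ reductions, and use the balance condition to keep the coefficients nonnegative.

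\textbf{Step 2 (globalization).} With pointwise Lorentzian signature, follow Alexandrov/Shenfeld--van Handel. Consider the operator
\[
\mathcal A f=\frac{\D(D^2f,A_1,\ldots,Z,I^{[m]})}{\D(I,A_1,\ldots,Z,I^{[m]})},
\]
which is elliptic on $\S^{n-1}$. Pointwise ellipticity follows from Step 1: the linear form $X\mapsto\D(X,\ldots)$ is positive on positive definite $X$, since the hyperbolicity cone contains them. The form $Q$ is then self-adjoint in $L^2(\mu)$ with the positive weight $\D(I,\ldots)$. Show that $\mathcal A$ has a one-dimensional top eigenspace and no other nonnegative eigenvalue except on linear functions, which form a kernel. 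This follows by the Alexandrov–Shenfeld–van Handel continuity/homotopy argument: deform $A_i$ and $Z$ to $I$ (balancedness is preserved along $(1-t)Z+tI$ since eigenvalues shift uniformly). At $t=1$ the spectrum is explicit via spherical harmonics, and eigenvalues can cross zero only if a nontrivial kernel appears. The kernel is excluded by the pointwise Lorentzian inequality, which gives a Bochner/Hilbert-type integral inequality $Q(f,f)\le$ something forcing $f$ linear.

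\textbf{Step 3.} Hyperbolicity of $Q$ with $Q(h_K,h_K)>0$ gives the reverse Cauchy–Schwarz inequality, which is exactly the asserted inequality. Degenerate cases follow by continuity.
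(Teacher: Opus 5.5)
Your proposal has a genuine gap in Step~1, which carries all of the theorem's new content, and the route you propose for it cannot work.

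\textbf{Step~1 fails.} First fix the bookkeeping. One reference body, say $M_1$, must supply the positive vector $Y=D^2h_{M_1}$. The pointwise form on $\Sym^2(u^\perp)$ is therefore $X\mapsto\D(X,X,A_2,\ldots,A_{j-2},Z,I^{[n-j-1]})$, with $n-1$ slots; yours has $n$. For $j=3$ there are no $A_i$, and this form is, up to a constant, the quadratic form $X\mapsto D_Z\sigma_3(X)$ on $\Sym^2(\R^{n-1})$. G\r{a}rding's theorem applies to it only when $\lambda(Z)\in\overline{\Gamma_3}$, and spectral balance does not give this. Take $n=5$, $j=3$ and $\lambda(Z)=(-\tfrac12,1,1,1)$ on $u^\perp\cong\R^4$. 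The three smallest eigenvalues sum to $\tfrac32>1=\lambda_{\max}$, so $Z$ is even strictly $2$-balanced. Yet $\sigma_3(\lambda)=3\cdot(-\tfrac12)+1=-\tfrac12<0$. More generally, $\lambda(Z)=(-(n-j-1)+\varepsilon,1,\ldots,1)$ is strictly $(n-j)$-balanced but has $\sigma_j<0$ for every $3\le j\le n-2$. So your planned check ``$\sigma_\ell(\lambda)\ge0$'' is false throughout the new range. Nevertheless the form is hyperbolic for this $Z$. With $Z=\diag(\lambda_k)$ and $\tr X=0$ one has $\D(X,X,Z,I)=-c\sum_{k,l}x_{kl}^2(\tr Z-\lambda_k-\lambda_l)$ with $c>0$, and $\tr Z-\lambda_k-\lambda_l\ge\tr Z-2\lambda_{\max}>0$. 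The theorem thus lives genuinely outside G\r{a}rding's hyperbolicity cones. For the same reason, your derivation of ellipticity from ``the hyperbolicity cone contains the positive definite matrices'' must be replaced. What works is that balance implies spectral $(n-j)$-positivity, which makes $\D(X,A_1,\ldots,A_{j-2},Z,I^{[n-j-1]})>0$ for positive definite $X$.

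\textbf{Your fallback lacks the key mechanism, and Step~2 depends on it.} Expanding along the diagonal of $Z$ writes the form as a multiple of $\sum_i\lambda_i\,\D(X^{\langle i\rangle},X^{\langle i\rangle},\ldots)$, and the coefficients $\lambda_i$ can be negative whatever balance says. Regrouping into subset sums $\sum_{i\in I}\lambda_i\ge0$ only gives a positive combination of Lorentzian forms, which need not be Lorentzian. What is needed is an induction on $j$ that lowers the dimension by one. The base case is the trace-zero computation above. In the induction step you diagonalize a positive definite reference matrix $C$, \emph{not} $Z$, and expand the form as a positive combination $\sum_i c_i\,\D(A^{\langle i\rangle},A^{\langle i\rangle},\ldots,Z^{\langle i\rangle},I^{[n-j-1]})$. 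Since $Z$ is then not diagonal, you need strict $(n-j)$-balance to pass to every principal submatrix $Z^{\langle i\rangle}$ in an arbitrary basis. This follows from the basis-free characterization $\sum_{i=1}^{n-j+1}\pair{u_i}{Zu_i}>\pair{u_0}{Zu_0}$ for all orthonormal systems $(u_0,\ldots,u_{n-j+1})$. The induction hypothesis and its equality case then give nondegeneracy of the form. Finally, homotope $Z$ to $I$ inside the convex cone of strictly balanced matrices. Along this path, nondegeneracy and $Q(I,I)>0$ make hyperbolicity an open and closed condition, so everything reduces to Alexandrov's classical inequality at $Z=I$. Your global homotopy in Step~2 would also need this pointwise nondegeneracy, in order to identify the kernel with the linear functions. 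The paper avoids that step by normalizing $\mathscr{A}f=h\,\D(D^2f,\ldots)/\D(D^2h,\ldots)$ with $h=h_{M_1}$, so that $\mathscr{A}h=h$. Integrating the pointwise inequality then gives $\lVert\mathscr{A}f\rVert_{L^2(\mu)}^2\ge\pair{f}{\mathscr{A}f}_{L^2(\mu)}$, which directly rules out eigenvalues in $(0,1)$.
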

        
    If $\zeta=h_{\BB}$, then $D^2\zeta(u)=I_{u^\perp}$, and~\cref{eq:thmintro:Alexandrov-Fenchel} is the classical Alexandrov--Fenchel inequality for $K,L,\bm{M}$ and $n-j$ copies of the unit ball. More generally, positivity of $D^2\zeta$ reduces the result to the classical case. The novelty is that $D^2\zeta(u)$ may be indefinite, provided it is spectrally $(n-j)$-balanced. Since spectral $k$-balance implies spectral $(k+1)$-balance, the condition weakens as $n-j$ increases; for $j=n-1$ it forces positive semidefiniteness, so the genuinely new range is $3\leq j\leq n-2$. The exclusion of $j=2$ is an artifact of our method; this case is covered by a different argument in forthcoming work of Brauner, Ortega-Moreno, and Schuster~\cite{Brauner2026}.
    
    The significance of \cref{thmintro:Alexandrov-Fenchel} is not merely that it extends the Alexandrov--Fenchel inequality beyond support functions. Its greater flexibility has concrete geometric consequences. As in the classical theory, it yields new Minkowski and Brunn--Minkowski type inequalities for a large family of geometric operators. In particular, our applications include new Brunn--Minkowski type inequalities for mean section operators conjectured by Schuster, showing that \cref{thmintro:Alexandrov-Fenchel} captures genuinely new geometric phenomena.
    
    \subsection*{A mixed discriminant inequality}

    Our proof follows the same general strategy as Hilbert's: we recast the desired reverse Cauchy--Schwarz inequality as a statement about the spectral signature of an associated quadratic form. To implement this approach in the present setting, we develop a version of Alexandrov's mixed discriminant inequality adapted to matrices satisfying the spectral balance condition above. This extension is the main local ingredient that allows Hilbert's method to go beyond the classical setting of support functions.
    
    The mixed discriminant polarizes the determinant in the same way that the mixed volume polarizes the volume of convex bodies.
    Here and in the following, we denote by $\Sym^2(\R^n)$ the vector space of symmetric $n$-dimensional matrices, by $\Sym^2_+(\R^n)$ the subset of positive semidefinite matrices, and by $I_n$ the $n$-dimensional identity matrix.
    
    As was shown by Alexandrov~\cite{Alexandroff1938}, mixed discriminants satisfy a reverse Cauchy--Schwarz inequality that mirrors~\cref{eq:Alexandrov-Fenchel_ineq}: for all matrices $A, B , C_1,\ldots,C_{n-2} \in \Sym^2_{+}(\R^n)$,
    \begin{equation}\label{eq:Alexandrov_matrix_ineq}
    	\D(A,B,C_1,\ldots,C_{n-2})^2
    	\geq \D(A,A,C_1,\ldots,C_{n-2})\D(B,B,C_1,\ldots,C_{n-2}).
    \end{equation}
    This inequality lies at the heart of several proofs of the Alexandrov--Fenchel inequality. 
    
    Our second main result generalizes~\cref{eq:Alexandrov_matrix_ineq} in the same way that \cref{thmintro:Alexandrov-Fenchel} generalizes~\cref{eq:Alexandrov-Fenchel_ineq}: one of the matrices is replaced by a spectrally balanced matrix that need not be positive semidefinite, at the cost of $n-j-1$ copies of the identity matrix.
    
    \begin{thmintro}\label{thmintro:mixed_discr_ineq}
    	Let $2\leq j\leq n-1$ and $Z\in\Sym^2(\R^n)$ be spectrally $(n-j)$-balanced. Then for all matrices $A,B\in\Sym^2_{+}(\R^n)$ and $\bm{C}=(C_1,\ldots,C_{j-2}) \subseteq \Sym^2_{+}(\R^{n})$,
    	\begin{align}\label{eq:thmintro:mixed_discr_ineq}
    		\D(A,B,\bm{C},Z,I_n^{[n-j-1]})^2
    		\geq \D(A,A,\bm{C},Z,I_n^{[n-j-1]}) \D(B,B,\bm{C},Z,I_n^{[n-j-1]}).
    	\end{align}
    \end{thmintro}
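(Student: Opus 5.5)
The plan is to follow Alexandrov's original route to \cref{eq:Alexandrov_matrix_ineq}: prove a \emph{signature statement} for the quadratic form
\[
Q_Z(X,Y)=\D(X,Y,\bm C,Z,I_n^{[n-j-1]}),\qquad X,Y\in\Sym^2(\R^n),
\]
and then obtain \cref{eq:thmintro:mixed_discr_ineq} by the usual reverse Cauchy--Schwarz argument. By continuity we may assume that $C_1,\ldots,C_{j-2}$ are positive definite. We may also replace $Z$ by $Z+\varepsilon I_n$, since the balance condition only improves, and then let $\varepsilon\to0$. Since $\D$ is invariant under simultaneous orthogonal conjugation and $I_n$ is fixed by it, we may take $Z=\diag(\lambda_1,\ldots,\lambda_n)$ with $\lambda_1\le\cdots\le\lambda_n$.

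The signature statement is that $Q_Z$ is positive at $I_n$ and has exactly one positive eigenvalue, i.e.\ it is negative semidefinite on the $Q_Z(I_n,\cdot)$-orthogonal complement of $I_n$. Once this holds, the standard two-dimensional argument gives \cref{eq:thmintro:mixed_discr_ineq}. Namely, for $A,B\succeq0$ with $Q_Z(A,A)>0$, the restriction of $Q_Z$ to $\mathrm{span}(A,B)$ cannot be positive definite, so its $2\times2$ Gram determinant is $\le0$. The case $Q_Z(A,A)\le0$ will be handled by approximation once we know $Q_Z(A,A)\ge0$ for $A\succeq0$.

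\textbf{Step 1: positivity.} Expand the mixed discriminant with $n-j-1$ identities and one diagonal matrix into principal minors. Up to a positive constant,
\[
Q_Z(X,X)=\sum_{|T|=j}\Big(\sum_{i\notin T}\lambda_i\Big)\,\D_T(X_T,X_T,\bm C_T),
\]
where $T\subseteq[n]$ and the subscript denotes the principal $T\times T$ block. The weights $\mu_T=\sum_{i\notin T}\lambda_i$ are sums of $n-j$ eigenvalues, so they need not be nonnegative. The balance condition $\lambda_1+\cdots+\lambda_{n-j+1}\ge\lambda_n$ must therefore enter as a pairing argument. Each negative weight should be compensated by the weight of a set $T'$ that swaps an index of small eigenvalue for the index $n$. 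Combined with the positivity and monotonicity of principal-minor mixed discriminants for $X,\bm C\succeq0$, this should give $Q_Z(X,X)\ge0$ for $X\succeq0$, and $Q_Z(I_n,I_n)>0$ directly.

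\textbf{Step 2: deformation.} The function $Z\mapsto(\lambda_1+\cdots+\lambda_{k+1})(Z)-\lambda_n(Z)$ is concave, being a sum of smallest eigenvalues minus the largest one. Hence the set of spectrally $k$-balanced matrices is a convex cone containing $I_n$. Consider the path $Z_t=(1-t)I_n+tZ$. At $t=0$ we are in the classical case, where $Q_{I_n}$ has Lorentzian signature by \cref{eq:Alexandrov_matrix_ineq}. The number of positive eigenvalues of $Q_{Z_t}$ can only change if $Q_{Z_t}$ becomes degenerate on $\Sym^2(\R^n)$, or if $Q_{Z_t}(I_n,I_n)$ vanishes; the latter is excluded by Step 1. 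So the whole proof reduces to showing that $Q_{Z_t}$ is nondegenerate for every $t$.

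\textbf{Step 3: nondegeneracy, the main obstacle.} Suppose $Q_{Z}(X,\cdot)\equiv0$. First I would test against rank-one matrices $vv^\top$. Using $\D(X,vv^\top,\ldots)=\tfrac1n\,\D_{v^\perp}(\ldots)$-type reduction formulas, this expresses the vanishing as a family of identities for mixed discriminants on hyperplanes $v^\perp$. Those identities involve the compression of $Z$ to $v^\perp$ and $n-j-1$ identities, i.e.\ an instance of the same problem in dimension $n-1$, which I would handle by induction on $n$. The difficulty is that the compression of $Z$ need not inherit the right balance index. By Cauchy interlacing, the compressed eigenvalues $\mu_1\le\cdots\le\mu_{n-1}$ satisfy $\mu_i\ge\lambda_i$ and $\mu_{n-1}\le\lambda_n$, which gives $\mu_1+\cdots+\mu_{n-j+1}\ge\lambda_n\ge\mu_{n-1}$. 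Since the dimension drops by one while the number of identities is unchanged, the relevant index in dimension $n-1$ is $(n-1)-(j-1)$. So I would choose to let $j$ drop by one in the induction, possibly removing one $C_i$ via a rank-one test direction, so that the balance index stays $n-j$. This is presumably why $j\ge2$, and why $j=2$ is the base case where only $Z$ and identities remain and the claim is checked directly on eigenvalues.

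Having the identities on every hyperplane, I would conclude $X=0$ as in Alexandrov's proof: the inductive Lorentzian signature on each $v^\perp$ forces $X$ restricted to $v^\perp$ to be a multiple of a fixed positive matrix, and then the trace-type identity $Q_Z(X,I_n)=0$ forces that multiple to vanish. I expect this bookkeeping to be the delicate point. In particular, it must verify that the interlacing estimate really lands in the balanced class needed by the induction hypothesis rather than a weaker one.
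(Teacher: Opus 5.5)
Your architecture is the paper's. You reduce to strictly balanced $Z$ and positive definite $C_i$, deform $Z$ to $I_n$ inside the convex cone of strictly $(n-j)$-balanced matrices (the signature cannot change while the form stays nondegenerate), and prove nondegeneracy by an induction in which $n$ and $j$ drop together. Your interlacing estimate $\mu_1+\cdots+\mu_{n-j+1}\ge\lambda_1+\cdots+\lambda_{n-j+1}\ge\lambda_n\ge\mu_{n-1}$ is correct; it is a slicker substitute for the orthonormal-system characterization in \cref{lem:spec_balanced_iff}. So the worry at the end of your Step~3 is already settled. In Step~1 the weights $\mu_T$ are in fact nonnegative, since balance implies spectral $(n-j)$-positivity (\cref{lem:spec_balanced_implies_positive}). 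Positivity on $\Sym^2_+(\R^n)$ is therefore immediate, and no pairing argument is needed.

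The genuine gap is the concluding argument of Step~3. Write $\bm{C'}=(C_2,\ldots,C_{j-2})$ and $Q^{(i)}=\D(\cdot,\cdot,{\bm{C'}}^{\langle i\rangle},Z^{\langle i\rangle},I_{n-1}^{[n-j-1]})$. Testing the radical element $X$ against $e_ie_i^{\ast}$ gives only the linear relations $Q^{(i)}(X^{\langle i\rangle},C_1^{\langle i\rangle})=n\,Q_Z(X,e_ie_i^{\ast})=0$. Combined with the inductive Lorentzian signature of $Q^{(i)}$, these yield only $Q^{(i)}(X^{\langle i\rangle},X^{\langle i\rangle})\le 0$. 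They do not force $X^{\langle i\rangle}$ to be a multiple of anything, and the relation $Q_Z(X,I_n)=0$ is not what closes the argument.

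The missing idea is the quadratic identity obtained by diagonalizing $C_1=\sum_i c_i e_ie_i^{\ast}$ with $c_i>0$:
\[
0=Q_Z(X,X)=\frac1n\sum_{i=1}^n c_i\,Q^{(i)}\big(X^{\langle i\rangle},X^{\langle i\rangle}\big).
\]
Every summand is $\le 0$, so all of them vanish. Hence equality holds in the reverse Cauchy--Schwarz inequality for $Q^{(i)}$ applied to $(X^{\langle i\rangle},C_1^{\langle i\rangle})$. By \cref{lem:hyperbolic_equal_case} with coefficient $a=0$, $X^{\langle i\rangle}\in\rad Q^{(i)}$. Nondegeneracy of $Q^{(i)}$ then gives $X^{\langle i\rangle}=0$ for all $i$, hence $X=0$. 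This nondegeneracy must be carried along in the induction hypothesis, not just the count of positive eigenvalues. The relevant hyperplanes are the $e_i^\perp$ for an eigenbasis of $C_1$, not arbitrary $v^\perp$.

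Your base case $j=2$ also still needs an actual argument. The paper shows that on trace-zero $X$ the form $\D(X,X,Z,I_n^{[n-3]})$ is a negative multiple of $\sum_{k,l}x_{kl}^2(\tr Z-\lambda_k-\lambda_l)$. Strict $(n-2)$-balance gives $\tr Z>2\lambda_{\max}$, so the form is negative definite on $\ker\tr$, which yields both hyperbolicity and nondegeneracy.
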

    
    The two theorems are related as follows. At each point $u\in\S^{n-1}$, the proof of \cref{thmintro:Alexandrov-Fenchel} requires \cref{thmintro:mixed_discr_ineq} in the $(n-1)$-dimensional space $u^\perp$, with $j-1$ in place of $j$ and $Z=D^2\zeta(u)$. Since $n-j=(n-1)-(j-1)$, the balance condition in \cref{thmintro:mixed_discr_ineq} is exactly the one imposed on $D^2\zeta(u)$ in \cref{thmintro:Alexandrov-Fenchel}. As \cref{thmintro:mixed_discr_ineq} is only available for $j-1\geq 2$, this is where the restriction $j>2$ in \cref{thmintro:Alexandrov-Fenchel} originates; for $j=2$, the pointwise inequality that our approach would require fails (see \cref{rem:homogeneity_2}).
    
    Note that the set of spectrally $(n-j)$-balanced matrices is a closed convex cone in $\Sym^2(\R^n)$. It has nontrivial intersection with the cone $\Sym^2_+(\R^n)$ but, unless $j=n-1$, is not contained in it. The proof of \cref{eq:thmintro:mixed_discr_ineq} is by an induction on $j\geq 2$, where the induction step involves a reduction of the dimension, similar to Alexandrov's original argument.
    
    
    \subsection*{Application I: Functionals of area measures}

    The \emph{area measure} of order $j\in\{1,\ldots,n-1\}$ of a convex body $K\in\K^n$, denoted by $S_j(K,{}\cdot{})$, is a classical geometric measure on the unit sphere $\S^{n-1}$. If $K$ has smooth boundary, $S_j(K,{}\cdot{})$ is given in terms of the Gauss map by the $j$-th elementary symmetric function of the reciprocal principal curvatures. If $K$ is a polytope, $S_j(K,{}\cdot{})$ is supported on the normal directions to the $j$-dimensional faces of $K$. For a given function $\zeta\in C^2(\S^{n-1})$, define $\mathcal{F}_{\zeta} : \K^n \to \R$ by
    \begin{equation}\label{eq:functional_area_measure}
    	 \mathcal{F}_{\zeta}(K):=\frac{1}{n}\int_{\S^{n-1}} \zeta(u)\, dS_j(K,u).
    \end{equation}
    The functional $\mathcal{F}_\zeta$ can be written as a mixed volume, interpeted as a multilinear form on $C^2(\S^{n-1})$:
    \begin{equation}\label{eq:F_zeta_mixed_vol}
    	\mathcal{F}_\zeta(K) = \V(K^{[j]},\zeta,{\BB}^{[n-j-1]}).
    	\qquad K\in\K^n.
    \end{equation}
    Whenever $\zeta$ is a support function, this is of course a mixed volume of convex bodies.
    
    Colesanti, Hug and Saorín-Gómez~\cites{Colesanti2014,Colesanti2017e} showed that the functional $\mathcal{F}_\zeta$ is monotone (with respect to set inclusion) if and only if at every point $u\in\S^{n-1}$, the matrix $D^2\zeta(u)$ is \emph{spectrally $(n-j)$-positive}, meaning that the sum of any $n-j$ of its eigenvalues is nonnegative. Moreover, they showed that spectral positivity is necessary for log-concavity.
    
    \begin{thm*}[{\cites{Colesanti2014,Colesanti2017e}}]
    	Let $2 \leq j \leq n-1$ and $\zeta\in C^2(\S^{n-1})$. If $\mathcal{F}_{\zeta}$ is log-concave, then $D^2\zeta(u)$ is spectrally $(n-j)$-positive for every $u\in\S^{n-1}$.
    \end{thm*}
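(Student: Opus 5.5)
The plan is to turn log-concavity into a sign condition on a mixed area measure involving $\zeta$, and then test that measure at a single point with well-chosen smooth bodies. Throughout, log-concavity is understood in the usual sense: $\mathcal{F}_\zeta\geq 0$ on $\K^n$ and $\mathcal{F}_\zeta((1-t)K+tL)\geq \mathcal{F}_\zeta(K)^{1-t}\mathcal{F}_\zeta(L)^t$.

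\emph{Step 1: from log-concavity to monotonicity along Minkowski sums.} By \cref{eq:F_zeta_mixed_vol}, $\mathcal{F}_\zeta$ is positively homogeneous of degree $j$. I would use the standard fact that a nonnegative, $j$-homogeneous, log-concave functional has $\mathcal{F}_\zeta^{1/j}$ concave. (Rescale $K$ and $L$ so that both have value $1$, apply log-concavity, and undo the scaling.) Concavity together with $1$-homogeneity gives superadditivity:
\[
\mathcal{F}_\zeta(K+tL)^{1/j}\geq \mathcal{F}_\zeta(K)^{1/j}+t\,\mathcal{F}_\zeta(L)^{1/j}\geq \mathcal{F}_\zeta(K)^{1/j}, \qquad t\geq 0.
\]
Since $t\mapsto \mathcal{F}_\zeta(K+tL)$ is a polynomial, its right derivative at $t=0$ is nonnegative. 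By multilinearity this derivative is
\[
j\,\V(L,K^{[j-1]},\zeta,\BB^{[n-j-1]})=\frac{j}{n}\int_{\S^{n-1}} h_L\, dS(K^{[j-1]},\zeta,\BB^{[n-j-1]}).
\]
Here $S(\cdots)$ is the mixed area measure, extended to $\zeta$ by linearity. Because differences of support functions are dense in $C(\S^{n-1})$, this signed measure must be nonnegative for every $K\in\K^n$.

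\emph{Step 2: localize via densities.} For $K$ of class $C^2_+$, the measure $S(K^{[j-1]},\zeta,\BB^{[n-j-1]})$ has a continuous density with respect to spherical Lebesgue measure. At $u$, this density is the mixed discriminant
\[
\D\bigl(D^2h_K(u)^{[j-1]},D^2\zeta(u),I_{u^\perp}^{[n-j-1]}\bigr),
\]
computed in the $(n-1)$-dimensional space $u^\perp$. A nonnegative measure with continuous density has a pointwise nonnegative density, so this mixed discriminant is $\geq 0$ at every $u$ and for every such $K$.

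\emph{Step 3: choice of test bodies.} Fix $u_0$ and diagonalize $Z=D^2\zeta(u_0)$ on $u_0^\perp$ with eigenvalues $\lambda_1\leq\cdots\leq\lambda_{n-1}$. Let $F$ be the span of the eigenvectors for $\lambda_1,\ldots,\lambda_{n-j}$, let $E=F^\perp\cap u_0^\perp$, which has dimension $j-1$, and let $P$ be the orthogonal projection onto $E$.
\begin{itemize}
\item I need a smooth strictly convex body with $D^2h_K(u_0)=P+\varepsilon I$. I would realize it by an ellipsoid with support function $\sqrt{x^\top A x}$ and $A$ chosen suitably, since the Hessian of this function at $u_0$ on $u_0^\perp$ is an explicit rescaling of $A$ restricted to $u_0^\perp$. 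Adding $\varepsilon h_\BB$ handles the $\varepsilon I$ term.
\item Letting $\varepsilon\to0$, multilinearity and continuity give $\D(P^{[j-1]},Z,I^{[n-j-1]})\geq0$.
\item Expanding with the block structure, the $j-1$ copies of $P$ use up the $E$-block. What remains is, up to a positive combinatorial constant, the mixed discriminant on $F$ of $Z|_F$ with $n-j-1$ identities. This equals $\tfrac{1}{n-j}\tr(Z|_F)=\tfrac{1}{n-j}(\lambda_1+\cdots+\lambda_{n-j})$.
\end{itemize}
Hence the sum of the $n-j$ smallest eigenvalues is nonnegative, and so is the sum of any $n-j$ eigenvalues. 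Since $u_0$ was arbitrary, this is spectral $(n-j)$-positivity.

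The main obstacle I expect is the bookkeeping in Step 3. One must check that the off-diagonal blocks of $Z$ between $E$ and $F$ do not contribute in the limit, which holds because $P$ vanishes on $F$, and one must get the normalization of the mixed discriminant right so that the constant is strictly positive. Step 1 also needs care in the degenerate case $\mathcal{F}_\zeta(K)=0$, but the superadditivity inequality still gives a nonnegative derivative there.
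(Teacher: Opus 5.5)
\textbf{The proof breaks at the end of Step~1.} From $\int h_L\,dS(K^{[j-1]},\zeta,\BB^{[n-j-1]})\geq 0$ for all $L\in\K^n$ you cannot conclude that this signed measure is nonnegative. Density of differences of support functions is beside the point: positivity on a cone does not pass to positivity on all nonnegative functions. Moreover, support functions cannot be localized, since no nonzero support function vanishes outside a small cap. For instance, $\delta_{u_1}+\delta_{u_2}-|u_1+u_2|\,\delta_w$ with $w=(u_1+u_2)/|u_1+u_2|$ is nonnegative on every support function by subadditivity, but it is not a nonnegative measure.

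Nor can the gap be repaired using only what Step~1 produces. Let $\zeta=1+\psi$, where $\psi\geq0$ is a bump at $u_0$ narrow enough that $D^2\zeta(u_0)$ is negative definite; this is possible because its spherical Hessian is of order $-\delta^{-2}$ for width $\delta$, while $\psi\leq1$. Since $\zeta\geq 0$, symmetry of mixed volumes gives
\[
\V(L,K^{[j-1]},\zeta,\BB^{[n-j-1]})=\tfrac1n\int\zeta\,dS(L,K^{[j-1]},\BB^{[n-j-1]})\geq0
\]
for all $K,L$. So everything Step~1 delivers holds. Yet $\D(D^2h_K(u_0)^{[j-1]},D^2\zeta(u_0),I_{u_0^\perp}^{[n-j-1]})<0$ for every $K$ of class $C^2_+$.

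A telling symptom is that your argument never uses $j\geq2$. For $j=1$, $\mathcal{F}_\zeta$ is Minkowski linear, so for this $\zeta$ it is positive and log-concave, while $\tr D^2\zeta(u_0)<0$. Your proof would thus ``establish'' a false statement. Steps~2 and~3 are fine given their input, but that input is really monotonicity under set inclusion, i.e.\ nonnegative first variations in localized directions $\phi\geq0$. First-order consequences of log-concavity along Minkowski combinations cannot supply this.

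\textbf{The missing idea is second-order information.} Let $K$ be of class $C^2_+$ with $\mathcal{F}_\zeta(K)>0$, and let $\phi\in C^2(\S^{n-1})$ have arbitrary sign. Then $h_K+t\phi$ is a support function for small $|t|$, and $(\log\mathcal{F}_\zeta)''\leq 0$ at $t=0$ reads
\[
(j-1)\,\mathcal{F}_\zeta(K)\,\V(\phi,\phi,K^{[j-2]},\zeta,\BB^{[n-j-1]})\leq j\,\V(\phi,K^{[j-1]},\zeta,\BB^{[n-j-1]})^2 .
\]
This has content only when $j\geq2$. Insert $\phi_\varepsilon=\varepsilon^2\chi(u)\cos(\pair{u}{\xi}/\varepsilon)$, with $\chi$ a cutoff near $u_0$ and $\xi\in u_0^\perp$ a unit vector. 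The right-hand side is $o(\varepsilon^4)$. By \cref{eq:mixed_volume_integral}, the left-hand side equals
\[
-\frac{(j-1)\mathcal{F}_\zeta(K)}{2n}\,\varepsilon^2\int\chi^2\,\D(P_{u^\perp}\xi\otimes P_{u^\perp}\xi,D^2h_K^{[j-2]},D^2\zeta,I_{u^\perp}^{[n-j-1]})\,d\omega+o(\varepsilon^2).
\]
Shrinking the support of $\chi$ gives
\[
\D(\xi\otimes\xi,D^2h_K(u_0)^{[j-2]},D^2\zeta(u_0),I_{u_0^\perp}^{[n-j-1]})\geq0 .
\]
By part (iv) of \cref{lem:mixed_discr_properties}, this is a mixed discriminant on $\xi^\perp\cap u_0^\perp$. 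Your Step~3 computation there shows that, for every hyperplane of $u_0^\perp$, the $n-j$ smallest eigenvalues of the compression of $D^2\zeta(u_0)$ have nonnegative sum. Since $n-j\leq n-2$, the hyperplane can be chosen to contain the bottom $n-j$ eigenvectors of $D^2\zeta(u_0)$, which gives spectral $(n-j)$-positivity.

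Two details remain. Bodies with $\mathcal{F}_\zeta(K)>0$ and prescribed $D^2h_K(u_0)$ can be obtained by a local modification of a body with positive value, using continuity of $\mathcal{F}_\zeta$. The case $\mathcal{F}_\zeta\equiv0$ is degenerate.
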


    In~\cite{Colesanti2017e}, the authors raise the question of finding natural sufficient conditions on $\zeta$, other than being a support function, under which $\mathcal{F}_{\zeta}$ is log-concave. In the light of~\cref{eq:F_zeta_mixed_vol}, \cref{thmintro:Alexandrov-Fenchel} provides such a condition.
    
    \begin{thmintro}\label{thmintro:Brunn-Minkowski}
    	Let $2 < j \leq n-1$ and $\zeta\in C^2(\S^{n-1})$ be such that the matrix $D^2\zeta(u)$ is positive semidefinite or spectrally $(n-j)$-balanced at every point $u\in\S^{n-1}$. Then for all $K,L\in\K^n$ and $\lambda \in (0,1)$,
    	\begin{equation}\label{eq:thmintro:Brunn-Minkowski}
    		\mathcal{F}_{\zeta}((1-\lambda)K+\lambda L)
    		\geq \mathcal{F}_{\zeta}(K)^{1-\lambda} \mathcal{F}_{\zeta}(L)^{\lambda}.
    	\end{equation}
    \end{thmintro}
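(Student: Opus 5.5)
The plan is to derive the statement from \cref{thmintro:Alexandrov-Fenchel} in the standard way the Brunn--Minkowski inequality follows from the Alexandrov--Fenchel inequality, via the representation \cref{eq:F_zeta_mixed_vol}. Set $K_\lambda=(1-\lambda)K+\lambda L$ and $\phi(\lambda)=\mathcal{F}_\zeta(K_\lambda)=\V(K_\lambda^{[j]},\zeta,\BB^{[n-j-1]})$. By multilinearity and symmetry of the mixed volume extended to $C^2(\S^{n-1})$, $\phi$ is a polynomial in $\lambda$ of degree at most $j$. The claim \cref{eq:thmintro:Brunn-Minkowski} is log-concavity of $\phi$ on $[0,1]$, which reduces to showing $\phi\,\phi''\le(\phi')^2$ wherever $\phi>0$.

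First, positivity. I expect $\phi\ge 0$ for all $K$. One way to see this is from Colesanti--Hug--Saorín-Gómez: the spectral balance condition should imply spectral $(n-j)$-positivity. Indeed, if $\lambda_1\le\cdots\le\lambda_{n-1}$ are the eigenvalues, then any sum of $n-j$ of them is at least $\lambda_1+\cdots+\lambda_{n-j}$. This should be nonnegative once one notes that $\lambda_1+\cdots+\lambda_{n-j+1}\ge\lambda_{n-1}$ forces enough mass among the small eigenvalues; this needs a small check. Alternatively, positivity can be argued via the smooth-body representation $\frac1n\int\zeta\,dS_j$ together with an integration by parts, giving $\int s_{1}$-type mixed terms with spectral positivity. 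If $\phi$ vanishes somewhere, the inequality is trivial there, and the case where $\phi$ vanishes at an endpoint is handled by continuity/approximation.

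Second, the derivative computation. Writing $K_\lambda=K+\lambda(L-K)$ formally, with $f=h_L-h_K\in C^2$ after approximating $K,L$ by smooth bodies, we get
\[
\phi'(\lambda)=j\,\V(f,K_\lambda^{[j-1]},\zeta,\BB^{[n-j-1]}),\qquad \phi''(\lambda)=j(j-1)\,\V(f,f,K_\lambda^{[j-2]},\zeta,\BB^{[n-j-1]}).
\]
The needed inequality is then $\V(f,K_\lambda,\bm M)^2\ge \V(K_\lambda,K_\lambda,\bm M)\V(f,f,\bm M)$, with $\bm M=(K_\lambda^{[j-2]},\zeta,\BB^{[n-j-1]})$, together with $(j-1)/j\le1$. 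The factor $(j-1)/j$ helps: $\phi\phi''=j(j-1)\V(KK)\V(ff)\le j(j-1)\V(Kf)^2\le j^2\V(Kf)^2=(\phi')^2$ when $\V(ff)\ge0$, and the bound is trivial when $\V(ff)<0$.

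The main obstacle is that $f$ is a difference of support functions, while \cref{thmintro:Alexandrov-Fenchel} is stated for bodies $K,L$. I handle this by applying \cref{thmintro:Alexandrov-Fenchel} to the bodies $K_\lambda$ and $L$ (or $K$) directly and expanding by bilinearity. Write $Q(x,y)=\V(x,y,K_\lambda^{[j-2]},\zeta,\BB^{[n-j-1]})$, which is a symmetric bilinear form with $Q(K_\lambda,K_\lambda)=\phi\ge 0$. The reverse Cauchy--Schwarz inequality $Q(x,y)^2\ge Q(x,x)Q(y,y)$ holds for all support functions by \cref{thmintro:Alexandrov-Fenchel}. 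By the standard Hilbert argument, when $Q(K_\lambda,K_\lambda)>0$ this forces $Q$ to be negative semidefinite on the $Q$-orthogonal complement of $K_\lambda$ within differences of support functions. Here every $f\in C^2$ is $h_{K_\lambda+tB}$-shifted: $f=h_{M}-c\,h_{K_\lambda}$ is not quite right, so instead I use that $f+t h_{\BB}$ is a support function for large $t$. Quadratic polynomials in $t$ then give the inequality for $f$ by comparing leading behaviour, which is the usual extension of AF to differences. I expect this density/limit bookkeeping, including the approximation of general bodies by smooth ones and the continuity of $\mathcal{F}_\zeta$, to be the technical core. Integrating $(\log\phi)''\le0$ then yields \cref{eq:thmintro:Brunn-Minkowski}.
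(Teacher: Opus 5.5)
Your route is different from the paper's, and its main line is valid. The paper never differentiates. It sets $a_i=\V(K^{[i]},L^{[j-i]},\zeta,\BB^{[n-j-1]})$ and uses \cref{thmintro:Alexandrov-Fenchel} with reference bodies $(K^{[i-1]},L^{[j-i-1]})$ to get log-concavity of the finite sequence $(a_i)$. From this it derives the Minkowski type inequality $a_i^j\ge a_j^ia_0^{j-i}$ (\cref{thm:Minkowsi-ineq_zeta}). Inserting that into the binomial expansion of $\phi(\lambda)$ gives concavity of $\phi^{1/j}$ (\cref{thm:B-M-ineq_zeta:additive}), and AM--GM finishes.

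You instead apply \cref{thmintro:Alexandrov-Fenchel} at each $\lambda$ with reference bodies $K_\lambda^{[j-2]}$. This local argument only needs $\phi>0$ on $[0,1]$, whereas the quotients $a_i/a_{i+1}$ in the paper tacitly need the intermediate mixed terms to be positive. The paper's route in turn yields the Minkowski type inequality as a by-product. You also lose nothing in strength if you keep the factor $(j-1)/j$. Your computation gives $\phi\phi''=j(j-1)Q(h_{K_\lambda},h_{K_\lambda})Q(f,f)\le\frac{j-1}{j}(\phi')^2$ directly, with no case split on the sign of $Q(f,f)$, and this is exactly concavity of $\phi^{1/j}$, the additive form. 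Your ``small check'' also goes through: $\lambda_1+\cdots+\lambda_{n-j}\ge\lambda_{n-1}-\lambda_{n-j+1}\ge 0$, as in \cref{lem:spec_balanced_implies_positive}.

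Two parts of the write-up need repair.

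\emph{The ``main obstacle''.} This is not a real obstacle, and the fix you finally settle on fails. Knowing the reverse Cauchy--Schwarz inequality for $(h_{K_\lambda},f+t h_{\BB})$ only for large $t$ controls the $t^2$-coefficient, which is Alexandrov--Fenchel for the pair $(K_\lambda,\BB)$. It says nothing about the constant term you need. What works is what you wrote first, applying \cref{thmintro:Alexandrov-Fenchel} to $K_\lambda$ and $L$ and expanding, together with the identity you dismissed as ``not quite right'': $f=(1-\lambda)^{-1}(h_L-h_{K_\lambda})$. Under a change of basis $P$ of a plane, the Gram determinant $Q(x,x)Q(y,y)-Q(x,y)^2$ changes only by the factor $\det(P)^2>0$. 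Since $(h_{K_\lambda},f)$ arises from $(h_K,h_L)$ by a matrix of determinant $1$, the needed inequality is literally \cref{thmintro:Alexandrov-Fenchel} for $K,L$ with $\bm M=K_\lambda^{[j-2]}$. No smoothing is needed either: $\phi$ is a polynomial for arbitrary $K,L$ by multilinearity, and $\phi',\phi''$ are read off from its coefficients.

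\emph{Zeros of $\phi$.} Your treatment is wrong. If $\phi(0),\phi(1)>0$ and $\phi$ vanished at an interior point, the inequality would fail rather than hold trivially. The differential inequality $\phi\phi''\le(\phi')^2$ alone does not exclude this; take $\phi(\lambda)=(\lambda-\tfrac12)^2$. You must show $\phi>0$ on $[0,1]$ before integrating $(\log\phi)''\le 0$. This follows from the spectral $(n-j)$-positivity you already have, in either of two ways:
\begin{enumerate}[label=\upshape(\roman*)]
\item By the monotonicity criterion of Colesanti, Hug and Saor\'{\i}n-G\'omez quoted in the introduction, together with translation invariance and $j$-homogeneity, $\phi(\lambda)\ge\mathcal{F}_\zeta((1-\lambda)K)=(1-\lambda)^j\phi(0)$.
\item All coefficients $a_i$ are nonnegative, so $\phi(\lambda)\ge(1-\lambda)^ja_j+\lambda^ja_0$.
\end{enumerate}
If $\phi(0)=0$ or $\phi(1)=0$, the claim follows from $\phi\ge 0$.
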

    
    Since spectral balance implies spectral positivity, \cref{thmintro:Brunn-Minkowski} gives a sufficient condition that strengthens the necessary condition above. The authors believe that the condition in \cref{eq:thmintro:Brunn-Minkowski} can be further relaxed. Whether spectral positivity alone suffices remains open.
    
    \subsection*{Application II: Mean section operators}
    
    As a second application, we obtain a family of Brunn--Minkowski type inequalities for a Minkowski mean of the $k$-dimensional sections of the convex bodies involved. Given $k\in\{0,\ldots,n\}$ and a convex body $K\in\K^n$, Goodey and Weil~\cite{Goodey1992} defined its \emph{$k$-th mean section body} $\MSO_k K$ by
    \begin{equation}\label{eq:MSO_defi}
    	h(\MSO_k K,u)
    	= \int_{\mathrm{AGr}_k(\R^n)} h(K\cap E,u)\, d\mu_k(E),
    	\qquad u\in\S^{n-1},
    \end{equation}
    where $\mathrm{AGr}_k(\R^n)$ denotes the affine Grassmannian and $\mu_k$ is a suitably normalized motion invariant measure on $\mathrm{AGr}_k(\R^n)$.
    For $1\leq k\leq n$, the \emph{$k$-th mean section operator} $\MSO_k :\K^n \to \K^n$ is continuous with respect to the Hausdorff metric, homogeneous of degree~$n-k+1$, and commutes with all rotations. In the last decades, these operators have attracted considerable attention with regard to questions of injectivity, fixed points, isoperimetric and Minkowski type problems, as well as connections to integral geometry (see \cites{Berg2018,Goodey1992,Goodey2012,Goodey2014,Brauner2023,Brauner2024a,Brauner2024,Haberl2019,Hofstaetter2025}).

    Here we prove the following theorem, which gives an almost complete answer to a question posed by Schuster~\cite{Schuster2006} two decades ago:
    
    \begin{thmintro}\label{thmintro:B-M_mean_sec}
    	Let $1\leq i\leq n$ and $2\leq k < n-1$. Then for all $K,L\in\K^n$ and $\lambda\in (0,1)$,
    	\begin{equation}\label{eq:thmintro:B-M_mean_sec}
    		V_i(\MSO_k((1-\lambda)K+\lambda L))
    		\geq V_i(\MSO_k K)^{1-\lambda} V_i(\MSO_k L)^{\lambda}.
    	\end{equation}
    	If $K$ and $L$ are full-dimensional, equality holds if and only if they are translates.
    \end{thmintro}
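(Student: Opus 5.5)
Write $V_i(\MSO_k K)$ as a functional of the form $\mathcal{F}_\zeta$ and apply \cref{thmintro:Brunn-Minkowski}. The intrinsic volume $V_i$ is, up to a constant, $\V((\MSO_k K)^{[i]},\BB^{[n-i]})$. This is polynomial in $h(\MSO_k K,\cdot)$, so we first need a usable formula for the support function of the mean section body. By the Goodey--Weil representation, $h(\MSO_k K,\cdot)$ is a rotation-equivariant convolution of an area measure with a zonal kernel:
\[
h(\MSO_k K,\cdot)=c_{n,k}\,\bigl(S_{n-k+1}(K,\cdot)*\breve g_{n,k}\bigr)+(\text{Steiner point term}),
\]
where $\breve g_{n,k}$ is a known zonal function. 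Translations only shift $\MSO_k K$, so the Steiner term can be ignored for $V_i$.

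The main case is $i=1$. Here $V_1(\MSO_k K)$ is a multiple of the integral of $h(\MSO_k K,\cdot)$ over the sphere. By Fubini and rotation invariance this equals a constant times $\int_{\S^{n-1}} dS_{n-k+1}(K,\cdot)$, which is a multiple of an intrinsic volume. Log-concavity then follows from the classical Alexandrov--Fenchel inequality. For general $i$ this does not suffice, so we use a polarization. $V_i$ is a mixed volume of $i$ copies of $\MSO_k K$, and each copy is a linear image of $S_{n-k+1}(K,\cdot)$. Thus
\[
V_i(\MSO_k K)=\int \zeta_K\, dS_{n-k+1}(K,\cdot),
\]
with a test function $\zeta_K$ built from the other $i-1$ copies of $\MSO_k K$ together with the kernel. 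The plan is then:

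\begin{enumerate}
\item Set $j=n-k+1$, so that $2<j\le n-1$ exactly when $2\le k<n-1$. Show that $\zeta=h(\MSO_k M,\cdot)*(\text{adjoint kernel})$, tested against $S_j(K,\cdot)$, has a Hessian that is pointwise positive semidefinite or spectrally $(k-1)$-balanced, i.e.\ spectrally $(n-j)$-balanced.
\item Apply \cref{thmintro:Alexandrov-Fenchel} to the multilinear form in the arguments $(K_1,\dots,K_j)$ with this $\zeta$. This gives Alexandrov--Fenchel type inequalities for mixed functionals $\V(K_1,\dots,K_j,\zeta,\BB^{[n-j-1]})$.
\item Deduce log-concavity along the Minkowski segment $(1-\lambda)K+\lambda L$ by the standard argument. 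The function $t\mapsto\phi(t)$ is a polynomial whose second logarithmic derivative is controlled by the inequality from step 2, applied at every point of the segment. The nonlinear dependence through $i-1$ copies of $\MSO_k$ is handled by a concavity-of-mixed-volumes argument: $V_i^{1/i}$ of Minkowski combinations is concave and $\MSO_k$ maps Minkowski sums to sums of mixed images.
\end{enumerate}

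The main obstacle is step 1, namely verifying spectral balance of the explicit zonal kernel. This requires computing the eigenvalues of $D^2$ of the Goodey--Weil function $\breve g_{n,k}$, which is singular at the poles. The approach is to write the one-homogeneous extension in terms of $t=\langle u,e\rangle$. The Hessian on $u^\perp$ then has one radial eigenvalue $(1-t^2)g''-tg'$ and $n-2$ equal tangential eigenvalues $g-tg'$, with the sign pattern read off from Berg's formula for $g_{n,k}$. Balance then reduces to the single one-variable inequality
\[
(n-j)\cdot(\text{tangential})+\dots\ \ge\ \text{radial},
\]
or the reverse ordering, to be checked from the explicit formula. Convolution with a convex body's measure preserves the closed convex cone of spectrally balanced matrices, because the rotated Hessians are averaged. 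The singularity at the poles would be handled by approximation.

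For the equality case, run the equality analysis in the $i=1$ case. Equality in Alexandrov--Fenchel with $\BB$ references forces $K$ and $L$ to be homothetic. Since $\MSO_k$ is homogeneous of degree $j\ne1$, strict log-concavity under dilation then rules out nontrivial dilates, so $K$ and $L$ must be translates.
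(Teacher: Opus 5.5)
Your overall architecture (polarize the $i-1$ other copies, apply \cref{thmintro:Brunn-Minkowski} to a convolution test function, verify spectral balance) matches the paper's. However, Step~1 has a genuine gap, and it is exactly the step you call the crux: the Hessian of the zonal kernel is \emph{not} pointwise spectrally $(k-1)$-balanced, so averaging matrices in the balanced cone cannot work.

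For $t=\pair{u}{v}$, the eigenvalues of $D^2g_k^v(u)$ on $u^\perp$ are $a_k(t)=g_k(t)-tg_k'(t)$ with multiplicity $n-2$, and $b_k(t)=(1-t^2)g_k''(t)+g_k(t)-tg_k'(t)$ (you dropped the $g_k$ term). Since $a_k>0$ and $b_k<a_k$ by \cref{lem:Berg_fct_concave}, Berg's ODE \eqref{eq:Berg_fct_ODE} shows that the sum of the $k$ smallest eigenvalues minus the largest is
\[
b_k(t)+(k-2)a_k(t)=-\frac{k-1}{\kappa_k}\,t .
\]
This is negative whenever $\pair{u}{v}>0$, and there $b_k<0$ as well. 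So on half the sphere the kernel Hessian is neither balanced nor positive semidefinite, and the one-variable inequality you plan to check is false.

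The missing idea is that this defect is exactly linear in $v$:
\[
\tr D^2g_k^v(u)=(n-k)\lambda_{\max}(D^2g_k^v(u))-\frac{k-1}{\kappa_k}\pair{u}{v}.
\]
Integrate against the density $f$ of $S_{i-1}(M,\cdot)$, which is nonnegative \emph{and} orthogonal to linear functions. The linear term vanishes. Using $f\ge 0$, one also has $\pair{w}{D^2\zeta(u)w}\le\int f(v)\lambda_{\max}(D^2g_k^v(u))\,dv$. Together these give $\tr D^2\zeta(u)\ge(n-k)\lambda_{\max}(D^2\zeta(u))$, which implies $(k-1)$-balance; this is \cref{prop:Berg_fct_convol_balanced}. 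It also shows that the test function must be built from an area measure, $\zeta=c_{n,k}S_{i-1}(M,\cdot)\ast g_k$ as in \cref{lem:Lutwaks_trick}. It cannot be built from $h(\MSO_kM,\cdot)$ as you wrote, since a support function has neither of the two properties used.

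Step~3 and the equality case are also incomplete. $\MSO_k((1-\lambda)K+\lambda L)$ is not a Minkowski combination of $\MSO_kK$ and $\MSO_kL$ (\cref{rem:B-M_mean_sec:nontrivial}). Moreover, the coefficients of the polynomial $t\mapsto h_t=h(\MSO_kK_t,\cdot)$, $K_t=(1-t)K+tL$, are not a priori support functions. So concavity of $V_i^{1/i}$ on Minkowski combinations does not apply as stated. An infinitesimal version does work: in $\phi''$, the term with the $\zeta$-slot frozen at $M=\MSO_kK_t$ is controlled by \cref{thmintro:Brunn-Minkowski}, and the term $\V(\dot h_t,\dot h_t,h_t^{[i-2]},\BB^{[n-i]})$ by the classical Alexandrov--Fenchel inequality. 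The paper's route is cleaner:
\begin{enumerate}
\item Freeze $M$ and get log-concavity of $K\mapsto\V(\MSO_kK,M^{[i-1]},\BB^{[n-i]})$.
\item Bound both factors via Minkowski's inequality \eqref{eq:aux_Minkowski_type_ineq} with $C=\MSO_kK$ and $C=\MSO_kL$.
\item Choose $M=\MSO_k((1-\lambda)K+\lambda L)$.
\end{enumerate}

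For equality, you cannot simply run the $i=1$ case; you need a mechanism transferring equality from $V_i$ to $V_1$. In the paper, equality in \eqref{eq:aux_Minkowski_type_ineq} gives $\MSO_kK=\alpha M+x$ and $\MSO_kL=\beta M+y$ with $\alpha^{1-\lambda}\beta^{\lambda}=1$. Combined with $V_1(\MSO_kC)=d_{n,j}V_j(C)$, this becomes equality in the Brunn--Minkowski inequality \eqref{eq:aux_Brunn-Minkowski_type_ineq} for $V_j$, which forces translates.
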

    
    Brunn--Minkowski type inequalities for geometric operators were first established by Lutwak~\cite{Lutwak1993a} for projection body maps, and his technique has since been developed further to handle other operators, including mean section operators. Inequality~\cref{eq:thmintro:B-M_mean_sec} was first shown by Schuster~\cite{Schuster2006} for $k=2$ and all $1\leq i\leq n$. It was later shown by Schuster~\cite{Schuster2010} to hold for $i=n-k+2$ and then extended by Parapatits and Schuster~\cite{Parapatits2012} to the range $i\leq n-k+2$. Hence, the new cases covered by \cref{thmintro:B-M_mean_sec} are $i>n-k+2$ and $2\leq k<n-1$. The case $k=n-1$ of means of hyperplane sections corresponds to $j=2$ above and is not covered by our approach.

    One important ingredient in our proof is an integral representation of mean section operators that was not available when these earlier results were established.
    Goodey and Weil~\cites{Goodey1992,Goodey2014} showed that for $2\leq k\leq n-1$ and every convex body $K\in\K^n$ of dimension at least $n-k+2$, the support function of $\MSO_k K$, up to a translation and dilation, is given by
    \begin{equation}
    	\int_{\S^{n-1}} g_k(\pair{u}{v}) \, dS_{n-k+1}(K,v),
    	\qquad u\in\S^{n-1},
    \end{equation}
    where the function $g_k \in C^{\infty}(-1,1)$ arises from the Green's function of some second-order differential operator on the unit sphere in $\R^k$. 
     Combining~\cref{eq:MSO_integral_rep} with a technique of Lutwak~\cite{Lutwak1990}, we express mixed volumes involving $\MSO_k K$ as $\V(K^{[j]},\zeta,{\BB}^{[n-j-1]})$, where $j=n-k+1$. This reduces~\cref{eq:thmintro:B-M_mean_sec} to \cref{thmintro:Brunn-Minkowski} for functions of the form
    \begin{equation*}
    	\zeta(u)
    	= \int_{\S^{n-1}} g_k(\pair{u}{v})f(v)\, dv,\qquad u\in\S^{n-1},
    \end{equation*}
    where $f\in C(\S^{n-1})$ is nonnegative and orthogonal to all linear functions. Through a careful analysis of Berg's functions, we show that $\zeta\in C^2(\S^{n-1})$ and that $D^2\zeta(u)$ is spectrally $(k-1)$-balanced for every $u\in\S^{n-1}$.

    \subsection*{Organization of the article}
    
    In \cref{sec:hyperbolicity}, we discuss hyperbolic quadratic forms, which provide the general mechanism behind reverse Cauchy--Schwarz inequalities. In \cref{sec:spec_pos_and_balanced}, we collect basic properties of spectrally positive and balanced matrices. We employ all of this in \cref{sec:mixed_discriminants} to prove the mixed discriminant inequality of \cref{thmintro:mixed_discr_ineq}. \cref{sec:mixed_vols} is devoted to mixed volume inequalities: we establish the Alexandrov--Fenchel type inequality of \cref{thmintro:Alexandrov-Fenchel} and derive Minkowski and Brunn--Minkowski type inequalities from it, in particular \cref{thmintro:Brunn-Minkowski}. Finally, in \cref{sec:mean_sec_ops}, we apply these results to mean section operators and prove \cref{thmintro:B-M_mean_sec}.

    \section{Hyperbolic quadratic forms}
    \label{sec:hyperbolicity}

    In this section, we discuss hyperbolic quadratic forms in an abstract setting. They provide the natural framework for understanding the reverse Cauchy--Schwarz inequalities that arise in Alexandrov's mixed discriminant inequality and the Alexandrov--Fenchel inequality. We begin by recalling the definition of hyperbolicity.

    \begin{defi}\label{defi:hyperbolic}
        A quadratic form $Q$ on a real vector space $X$ is \emph{hyperbolic} if it is negative semidefinite on a hyperplane of $X$ but not on the entire space $X$.
    \end{defi}

    Throughout this article we will identify a quadratic form with its associated bilinear form (that is, $Q(x,x)=Q(x)$).
    For the remaining section, we always assume that $X$ is a real vector space and $Q$ is a quadratic form on $X$.
    We want to emphasize that the theory laid out in this section does not require $X$ to be finite-dimensional nor to carry an inner-product structure.
    The following lemma (which can be regarded as a modification of \cite{Shenfeld2019}*{Lemma~2.9}) demonstrates the connection between hyperbolicity and reverse Cauchy--Schwarz inequalities.

    \begin{lem}\label{lem:hyperbolic_iff}
        The following are equivalent.
        \begin{enumerate}[label=\upshape(\alph*)]
			\item \label{lem:hyperbolic_iff:reverse_C-S}
            $Q(x,y)^2\geq Q(x,x)Q(y,y)$ for all $x,y \in X$ such that $Q(y,y) > 0$.
			\item \label{lem:hyperbolic_iff:hyperbolic}
            There exists a linear form $\xi : X \to \mathbb{R}$ such that $Q(x,x)\leq 0$ for all $x \in \ker\xi$.
		\end{enumerate}
    \end{lem}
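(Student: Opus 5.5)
The plan is to prove the two implications separately. Each is a short argument about a two-dimensional restriction of $Q$. Throughout I use bilinearity of the associated form, $Q(x+ty,x+ty)=Q(x,x)+2tQ(x,y)+t^2Q(y,y)$.

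\emph{(a) $\Rightarrow$ (b).} First suppose $Q(y,y)\leq 0$ for every $y\in X$. Then $\xi=0$ works, because $\ker\xi=X$. Otherwise fix some $y_0\in X$ with $Q(y_0,y_0)>0$ and define the linear form $\xi(x):=Q(x,y_0)$. For $x\in\ker\xi$ we have $Q(x,y_0)=0$, so (a) applied to the pair $(x,y_0)$ gives
\[
0=Q(x,y_0)^2\geq Q(x,x)\,Q(y_0,y_0).
\]
Since $Q(y_0,y_0)>0$, this forces $Q(x,x)\leq 0$, which is (b).

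\emph{(b) $\Rightarrow$ (a).} Let $\xi$ be as in (b), and let $x,y\in X$ with $Q(y,y)>0$. First, $\xi(y)\neq 0$: otherwise $y\in\ker\xi$, and then $Q(y,y)\leq 0$, a contradiction. Set $t_0:=-\xi(x)/\xi(y)$. Then $x+t_0y\in\ker\xi$, so the quadratic polynomial
\[
p(t):=Q(x+ty,x+ty)=Q(y,y)\,t^2+2Q(x,y)\,t+Q(x,x)
\]
satisfies $p(t_0)\leq 0$. Its leading coefficient $Q(y,y)$ is positive, so $p$ attains a nonpositive value only if it has a real root. Hence its discriminant is nonnegative:
\[
4Q(x,y)^2-4Q(x,x)Q(y,y)\geq 0,
\]
which is exactly (a).

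I do not expect a genuine obstacle. The only points needing care are the degenerate case in (a) $\Rightarrow$ (b), where $Q$ is nowhere positive and one takes $\xi=0$, and the observation $\xi(y)\neq0$ in (b) $\Rightarrow$ (a). Neither argument uses finite dimensionality or an inner product, as the paper emphasizes.
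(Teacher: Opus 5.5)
Your proof is correct and follows the paper's argument: the same choice $\xi=Q(\cdot,y_0)$ (or $\xi=0$) for (a)$\Rightarrow$(b), and for (b)$\Rightarrow$(a) the same reduction to $Q(x-ay,x-ay)\le 0$ with $a=\xi(x)/\xi(y)$. Where the paper concludes by applying Sylvester's criterion to the $2\times 2$ Gram matrix of $x,y$, you use the discriminant of $t\mapsto Q(x+ty,x+ty)$, which is the same computation. Your version also avoids the paper's loose wording that this matrix is ``negative semidefinite'', which cannot hold since $Q(y,y)>0$.
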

    \begin{proof}
        \ref*{lem:hyperbolic_iff:reverse_C-S}$\Rightarrow$\ref*{lem:hyperbolic_iff:hyperbolic}:
        If $Q$ is negative semidefinite, we may choose $\xi=0$. Otherwise, there exists a vector $y\in X$ such that $Q(y,y)>0$. The reverse Cauchy--Schwarz inequality asserts that $Q(x,x)\leq 0$ whenever $Q(x,y)=0$. Hence, the linear form $\xi :X\to\R:\xi(x)=Q(x,y)$ is such that $Q$ is negative semidefinie on $\ker\xi$.

        \ref*{lem:hyperbolic_iff:hyperbolic}$\Rightarrow$\ref*{lem:hyperbolic_iff:reverse_C-S}:
        Take $x,y\in X$ such that $Q(y,y)>0$. Then $\xi(y)\neq 0$ and $x - a y \in \ker\xi$, where we defined $a=\xi(x)/\xi(y)$. By our assumption,
        \begin{align*}
            \begin{pmatrix}
                1 & - a
            \end{pmatrix}
            \begin{pmatrix}
                Q(x,x) & Q(x,y) \\
                Q(y,x) & Q(y,y)
            \end{pmatrix}
            \begin{pmatrix}
                1 \\ - a
            \end{pmatrix}
            = Q(x - ay,x - ay)
            \leq 0.
        \end{align*}
        By Sylvester's criterion, the two-by-two matrix above is negative semidefinite, and thus,
        \begin{align*}
            Q(x,y)^2 - Q(x,x)Q(y,y)
            =  - \det
            \begin{pmatrix}
                Q(x,x) & Q(x,y) \\
                Q(y,x) & Q(y,y)
            \end{pmatrix}
            \geq 0.
        \end{align*}
        This is precisely the desired reverse Cauchy--Schwarz inequality.
    \end{proof}

    In particular, if $Q$ is not negative semidefinite on $X$, then $Q$ satisfies a reverse Cauchy--Schwarz inequality if and only if it is hyperbolic. In the finite-dimensional case, this can also be characterized in terms of the signature of $Q$.

    \begin{lem}\label{lem:hyperbolic_iff_signature}
        If $X$ is finite-dimensional, then $Q$ is hyperbolic if and only if it has signature $(+,-,\ldots,-)$.
    \end{lem}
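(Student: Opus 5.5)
The plan is to use Sylvester's law of inertia: diagonalize $Q$ in a suitable basis, so that $Q$ has $p$ positive, $q$ negative and $r$ zero eigenvalues, with $p+q+r=\dim X$. By Sylvester's theorem, $p$ equals the maximal dimension of a subspace on which $Q$ is positive definite. Likewise, $q+r$ equals the maximal dimension of a subspace on which $Q$ is negative semidefinite. I read the signature $(+,-,\ldots,-)$ as $p=1$, with the remaining diagonal entries nonpositive. This allows zeros, consistent with the hyperbolic definition, which only asks for negative \emph{semi}definiteness.

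For the forward direction, suppose $Q$ is hyperbolic. Then $Q$ is not negative semidefinite on $X$, so some $y$ has $Q(y,y)>0$, which gives $p\geq 1$. Now let $H$ be a hyperplane on which $Q\le 0$, and suppose $p\ge 2$. Take a $2$-dimensional subspace $P$ on which $Q$ is positive definite. Since $\dim P+\dim H>\dim X$, the intersection $P\cap H$ contains a nonzero vector $x$. This $x$ satisfies both $Q(x,x)>0$ and $Q(x,x)\le 0$, a contradiction. Hence $p=1$.

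For the converse, suppose $p=1$. Choose a diagonalizing basis $e_1,\ldots,e_m$ with $Q(e_1,e_1)>0$ and $Q(e_i,e_i)\le 0$ for $i\ge 2$. Set $H=\operatorname{span}(e_2,\ldots,e_m)$, which is a hyperplane. For $x=\sum_{i\ge2}x_ie_i$ we get $Q(x,x)=\sum_{i\ge 2} x_i^2Q(e_i,e_i)\le 0$, so $Q$ is negative semidefinite on $H$. Also $Q(e_1,e_1)>0$, so $Q$ is not negative semidefinite on all of $X$. Thus $Q$ is hyperbolic.

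The only step needing care is the dimension-counting intersection argument. The rest is routine linear algebra, and the one convention to fix is that "signature $(+,-,\ldots,-)$" permits zero eigenvalues.
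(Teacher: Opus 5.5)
Your proof is correct, and in the forward direction it takes a different route from the paper. The paper's argument is constructive. It takes $y$ with $Q(y,y)=1$ and a basis $b_1,\ldots,b_n$ of $\ker\xi$ with $Q(b_i,b_j)=-\delta_{ij}$, then sets $b_0=y+\sum_i Q(y,b_i)b_i$, which gives an explicit basis in which $Q=\diag(+1,-1,\ldots,-1)$. Its converse is the same as yours. You instead combine Sylvester's characterization of $p$ as the maximal dimension of a positive definite subspace with a dimension count against the hyperplane.

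The constructive route avoids the law of inertia and exhibits the diagonalizing basis. However, the normalization $Q(b_i,b_j)=-\delta_{ij}$ silently requires $Q$ to be negative \emph{definite} on $\ker\xi$, and hyperbolicity does not give this. Two examples show the problem:
\begin{enumerate}[label=\upshape(\roman*)]
\item $Q(x)=x_0^2$ on $\R^2$ is hyperbolic with signature $(+,0)$. The paper reads $(+,-,\ldots,-)$ strictly, as its converse $Q=\xi_0^2-\sum_{i\geq 1}\xi_i^2$ over a full basis of $X^\ast$ shows. Under that reading the forward implication is false.
\item For the nondegenerate form $Q(x)=x_0^2+2x_0x_1$ on $\R^2$, the hyperplane $\{x_0=0\}$ is an admissible $\ker\xi$, yet $Q\equiv 0$ on it, so no $b_1$ with $Q(b_1,b_1)=-1$ exists there.
\end{enumerate}

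Your dimension count works for any hyperplane on which $Q\leq 0$. Your convention that the remaining diagonal entries are merely nonpositive is exactly what makes the equivalence true without a nondegeneracy assumption. In the nondegenerate case the paper's argument can be repaired by choosing $\xi=Q(\cdot,y)$: by the Cauchy--Schwarz step in the proof of \cref{lem:hyperbolic_equal_case}, any $x\in\ker\xi$ with $Q(x,x)=0$ lies in $\rad Q=\{0\}$.
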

    \begin{proof}
        Let throughout $\dim X=n+1$.
        Suppose first that $Q$ is hyperbolic and choose a vector $y\in X$ and a linear form $\xi : X \to \mathbb{R}$ such that $Q(y,y)=1$ and $Q$ is negative semidefinite on $\ker\xi$. Then there exists a basis $\{b_1,\ldots,b_n\}$ of the hyperplane $\ker \xi$ such that $Q(b_i,b_j)=-\delta_{ij}$ for $i,j\in [n]$. Define a vector
        \begin{equation*}
            b_0 = y + \sum_{i\in [n]} Q(y,b_i)b_i.
        \end{equation*}
        Direct computation shows that $Q(b_0,b_i)=0$ for all $i\in [n]$ and $Q(b_0,b_0)=1$. This implies that $b_0\notin \ker\xi$, and thus, $\{b_0,b_1,\ldots,b_n\}$ is a basis of $X$. Since $(Q(b_i,b_j))_{i,j} = \diag(+1,-1,\ldots,-1)$, the signature of $Q$ is given by $(+,-,\ldots,-)$.

        Conversely, suppose now that $Q$ has signature $(+,-,\ldots,-)$. Then there exists a basis $\{\xi_0,\xi_1,\ldots,\xi_n\}$ of the dual space $X^{\ast}$ such that $Q$ can be expressed as
        \begin{equation*}
            Q(x,x) = \xi_0(x)^2 - \sum_{i\in [n]} \xi_i(x)^2,
            \qquad x\in X.
        \end{equation*}
        Clearly $Q$ is negative semidefinite on $\ker\xi_0$, but not on the entire space $X$.
    \end{proof}

    In practice, one is often interested in reverse Cauchy--Schwarz inequalities on a mere convex cone $\mathcal{C}\subseteq X$. The following lemma (taken from \cite{Burago1988}*{Proposition~27.2.4}) asserts that this is essentially still equivalent to hyperbolicity. In there, we say that $y$ is in the interior of $\mathcal{C}$ if for every $x\in X$, there exists $\varepsilon>0$ such that $y+(-\varepsilon,\varepsilon)x \subseteq \mathcal{C}$.

    \begin{lem}[{\cite{Burago1988}}]\label{lem:hyperbolic_on_cone}
        Let $\mathcal{C}\subseteq X$ be a convex cone and suppose there exists some $y$ in the interior of $\mathcal{C}$ such that $Q(y,y)>0$ and $Q(x,y)^2\geq Q(x,x)Q(y,y)$  for all $x\in \mathcal{C}$. Then $Q$ is hyperbolic.
    \end{lem}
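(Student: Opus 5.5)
We prove the lemma by reducing it to \cref{lem:hyperbolic_iff}: it suffices to show that $Q$ is negative semidefinite on the hyperplane $H:=\{x\in X: Q(x,y)=0\}$. This is indeed a hyperplane, since $Q(y,y)>0$ means $\xi(x):=Q(x,y)$ is a nonzero linear form. Moreover, $Q$ is not negative semidefinite on $X$ because $Q(y,y)>0$. Hence negative semidefiniteness on $H$ is exactly hyperbolicity in the sense of \cref{defi:hyperbolic}.

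So fix $x\in H$; we must show $Q(x,x)\leq 0$. The hypothesis applies only to vectors in $\mathcal{C}$, whereas $x$ is arbitrary, so the key step is to move $x$ into the cone using the interiority of $y$. By assumption there is $\varepsilon>0$ with $y+tx\in\mathcal{C}$ for all $|t|<\varepsilon$. Applying the reverse Cauchy--Schwarz inequality to $z=y+tx$ gives
\[
Q(y+tx,y)^2\geq Q(y+tx,y+tx)\,Q(y,y).
\]
Since $Q(x,y)=0$, the left side equals $Q(y,y)^2$. The right side equals $\bigl(Q(y,y)+2tQ(x,y)+t^2Q(x,x)\bigr)Q(y,y)=\bigl(Q(y,y)+t^2Q(x,x)\bigr)Q(y,y)$. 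Dividing by $Q(y,y)>0$ yields $Q(y,y)\geq Q(y,y)+t^2Q(x,x)$, that is, $t^2Q(x,x)\leq 0$. Choosing any $t\neq 0$ with $|t|<\varepsilon$ gives $Q(x,x)\leq 0$.

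Thus $Q$ is negative semidefinite on $\ker\xi=H$, and by the remark above $Q$ is hyperbolic. Alternatively one may invoke \cref{lem:hyperbolic_iff} to obtain the reverse Cauchy--Schwarz inequality on all of $X$. The only subtle point is that cone elements are reached along a whole line segment $y+(-\varepsilon,\varepsilon)x$, which the interiority definition supplies, and that orthogonality to $y$ kills the linear term in $t$. I expect no real obstacle; no finite-dimensionality or topology is needed, in line with the standing conventions of this section.
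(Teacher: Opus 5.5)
Your proof is correct and is essentially the paper's argument: both take $\xi=Q(\cdot,y)$, use interiority of $y$ to place $y+tx$ in $\mathcal{C}$ for $x\in\ker\xi$, and apply the reverse Cauchy--Schwarz inequality there. The paper merely rescales to $z=x+\frac{1}{t}y$ and writes $Q(x,x)=Q(z,z)-Q(z,y)^2/Q(y,y)\leq 0$, whereas your unscaled version is marginally cleaner because it never uses the cone property (or the sign of $t$).
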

    \begin{proof}
        Define $\xi:X\to \R: \xi(x)=Q(x,y)$ and take $x\in \ker \xi$. Since $y$ lies in the interior of $\mathcal{C}$, we may choose some $t\neq 0$  such that $y+tx\in\mathcal{C}$. Next, define $z=x+\frac{1}{t}y \in\mathcal{C}$ and $a=Q(z,y)/Q(y,y)$ and note that $x=z-ay$. Then we obtain
        \begin{align*}
            Q(x,x)
            = Q(z,z) - 2aQ(z,y)+ a^2 Q(y,y)
            = Q(z,z) - \frac{Q(z,y)^2}{Q(y,y)}
            \leq 0.
        \end{align*}
        Hence, $Q$ is negative semidefinite on $\ker\xi$ but not on $X$.
    \end{proof}

    In light of this lemma, we may interpret the classical quadratic mixed discriminant and mixed volume inequalities of Alexandrov in terms of hyperbolic quadratic forms.

    \begin{exl}\label{exl:Alexandrov_mixed_disxr_hyperbolic}
        In the case of Alexandrov's mixed discriminant inequality~\cref{eq:Alexandrov_matrix_ineq}, we may consider the vector space $X=\Sym^2(\R^n)$ of symmetric $n$-dimensional matrices and the convex cone $\mathcal{C}=\Sym^2_+(\R^n)$ of positive semidefinite matrices. It is easy to see that the identity matrix $I_n$ is in the interior of $\Sym_+^2(\R^n)$. For fixed reference matrices $C_1,\ldots,C_{n-2}\in\Sym_+^2(\R^n)$, the quadratic form $Q=\D({}\cdot{},{}\cdot{},C_1,\ldots,C_{n-2})$ is hyperbolic on $\Sym^2(\R^n)$.
    \end{exl}

    \begin{exl}\label{exl:A-F_ineq_hyperbolic}
        The Alexandrov--Fenchel inequality~\cref{eq:Alexandrov-Fenchel_ineq} can be obtained by approximation from convex bodies that are sufficiently smooth. We may consider the vector space $X=C^2(\S^{n-1})$ and the convex cone $\mathcal{C}\subseteq X$ of $C^2$~support functions. It is a well-known fact (see~\cref{sec:mixed_vols:preps}) that the support function of the unit ball is in the interior of $\mathcal{C}$. In fact, any $C^2(\S^{n-1})$~function can be written as the difference of $C^2$~support functions, and thus, the mixed volume extends to a multilinear form on $C^2(\S^{n-1})$. For fixed reference bodies $M_1,\ldots,M_{n-2}\in \K^n$ with $C^2$~support functions, the quadratic form $Q=\V({}\cdot{},{}\cdot{},M_1,\ldots,M_{n-2})$ is hyperbolic on $C^2(\S^{n-1})$.
    \end{exl}

    For our purposes, it will be also important to control the equality conditions in the reverse Cauchy--Schwarz inequality.

    \begin{defi}
         The \emph{radical} of $Q$ is the linear subspace $\rad Q\subseteq X$ defined as
         \begin{equation*}
            \rad Q = \{x\in X \colon Q(x,y)=0 \text{ for all } y\in X\}.
        \end{equation*}
        Moreover, $Q$ is called \emph{nondegenerate} if $\rad Q =\{0\}$.
    \end{defi}

    The next lemma shows another feature of the perspective of hyperbolic quadratic forms: namely, that the equality conditions of the reverse Cauchy--Schwarz inequality can be described straightforwardly in terms of radical.

    \begin{lem}\label{lem:hyperbolic_equal_case}
        Let $Q$ be hyperbolic. Then for all $x,y\in X$ such that $Q(y,y)>0$, the following are equivalent.
        \begin{enumerate}[label=\upshape(\alph*)]
            \item \label{lem:hyperbolic_equal_case:equality}
            $Q(x,y)^2 = Q(x,x)Q(y,y)$.
            \item \label{lem:hyperbolic_equal_case:radical}
            There exists some $a\in\R$ such that $x-ay\in\rad Q$.
        \end{enumerate}
        In this case, $a\in\R$ is unique and given by $ a = Q(x,y) / Q(y,y) $.
    \end{lem}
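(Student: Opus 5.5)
The plan is to prove \ref{lem:hyperbolic_equal_case:radical}$\Rightarrow$\ref{lem:hyperbolic_equal_case:equality} by a direct computation, and to prove the converse by exploiting negative semidefiniteness on a well-chosen hyperplane. Throughout, fix $x,y\in X$ with $Q(y,y)>0$.

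\emph{Radical implies equality.} Suppose $x-ay\in\rad Q$. Then $Q(x-ay,z)=0$ for every $z$, so $Q(x,z)=aQ(y,z)$ for all $z\in X$. Taking $z=y$ gives $Q(x,y)=aQ(y,y)$, which already forces $a=Q(x,y)/Q(y,y)$; this settles uniqueness and the formula for $a$. Taking $z=x$ gives $Q(x,x)=aQ(x,y)$. Hence
\[
Q(x,x)Q(y,y)=aQ(x,y)Q(y,y)=Q(x,y)^2 .
\]

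\emph{Equality implies radical.} Assume $Q(x,y)^2=Q(x,x)Q(y,y)$ and put $a=Q(x,y)/Q(y,y)$ and $w=x-ay$. Then
\[
Q(w,y)=0
\quad\text{and}\quad
Q(w,w)=Q(x,x)-\frac{Q(x,y)^2}{Q(y,y)}=0 .
\]
It remains to show $Q(w,z)=0$ for every $z\in X$.

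The key step is that $Q$ is negative semidefinite on the hyperplane $H=\ker\xi$, where $\xi(v)=Q(v,y)$. To see this, note that since $Q$ is hyperbolic, \cref{lem:hyperbolic_iff} gives the reverse Cauchy--Schwarz inequality for every $v$ against $y$. Then for $v\in H$ we get $0=Q(v,y)^2\geq Q(v,v)Q(y,y)$, so $Q(v,v)\leq 0$.

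Now take any $v\in H$. Since $w\in H$ and $Q(w,w)=0$, we have $Q(w+tv,w+tv)\leq 0$ for all real $t$, that is,
\[
2tQ(w,v)+t^2Q(v,v)\leq 0 .
\]
Dividing by $t$ and letting $t\to0^{\pm}$ forces $Q(w,v)=0$. This is the standard argument that isotropic vectors of a semidefinite form lie in its radical.

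Finally, any $z\in X$ decomposes as $z=v+by$ with $b=Q(z,y)/Q(y,y)$ and $v\in H$. Therefore
\[
Q(w,z)=Q(w,v)+bQ(w,y)=0,
\]
so $w\in\rad Q$.

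The main obstacle is obtaining negative semidefiniteness on the specific hyperplane $y^{\perp_Q}$, rather than on the unspecified hyperplane in the definition of hyperbolicity. That step is handled by combining \cref{lem:hyperbolic_iff} with the observation that hyperbolicity yields \ref{lem:hyperbolic_iff:hyperbolic}; after it, the argument is routine.
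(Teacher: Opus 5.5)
Your proof is correct and follows essentially the same route as the paper: set $a=Q(x,y)/Q(y,y)$, note that $w=x-ay$ is $Q$-isotropic and $Q$-orthogonal to $y$, use negative semidefiniteness on $\{z\in X: Q(z,y)=0\}$ to get $Q(w,\cdot)=0$ there, and then decompose a general $z$ as $(z-by)+by$. The differences are cosmetic. You get $Q(w,v)=0$ from the quadratic-in-$t$ argument where the paper quotes Cauchy--Schwarz for the semidefinite restriction, you justify via \cref{lem:hyperbolic_iff} the semidefiniteness on that specific hyperplane (which the paper just asserts ``by hyperbolicity''), and for the converse you compute directly instead of noting that $(1,-a)^{\top}$ lies in the kernel of the $2\times 2$ Gram matrix.
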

    \begin{proof}
        \ref*{lem:hyperbolic_equal_case:equality} $\Rightarrow$ \ref*{lem:hyperbolic_equal_case:radical}: Suppose that $Q(x,y)^2 = Q(x,x)Q(y,y)$ and set $ a = Q(x,y) / Q(y,y)$. Then we have
        \begin{equation*}
            Q(x-ay,y) = 0
            \qquad\text{and}\qquad
            Q(x-ay,x-ay) = Q(x,x) - \frac{Q(x,y)^2}{Q(y,y)} = 0.
        \end{equation*}
        By hyperbolicity, the bilinear form $Q$ is negative semidefinite on the hyperplane $\{z\in X: Q(z,y)=0\}\subseteq X$. Hence, for every $z\in X$ such that $Q(z,y)=0$, the Cauchy--Schwarz inequality yields
        \begin{equation*}
            Q(x-ay,z) ^2 \leq Q(x-ay,x-ay)Q(z,z) = 0,
        \end{equation*}
        and thus, $Q(x-ay,z)=0$. Now, take an arbirtray vector $z\in X$ and let $b=Q(z,y)/Q(y,y)$. Then $Q(z-by,y)=0$, and thus,
        \begin{equation*}
            Q(x-ay,z)
            = Q ( x-ay,z-by) + b Q(x-ay,y)
            = 0.
        \end{equation*}
        This shows that $x-ay\in\rad Q$.

        \ref*{lem:hyperbolic_equal_case:radical} $\Rightarrow$ \ref*{lem:hyperbolic_equal_case:equality}: If $x-ay \in\rad Q$, then $Q(x-ay,x)=Q(x-ay,y)=0$,       
        which implies that the vector $(1,-a)^{\top}\in\R^2$ lies in the kernel of the matrix   
        \begin{equation*}
            S =
            \begin{pmatrix}
                Q(x,x) & Q(x,y) \\
                Q(y,x) & Q(y,y)
            \end{pmatrix}.
        \end{equation*}
        Hence, the matrix $S$ is singular and $Q(x,x)Q(y,y) - Q(x,y)^2 = \det S = 0$.
        
        For the final part of the statement, note that the equation $Q(x-ay,y)=0$ shows that $a=Q(x,y)/Q(y,y)$.
    \end{proof}

    \begin{lem}\label{lem:radical_subset_ker_xi}
        Let $Q$ be hyperbolic and $\xi:X\to \R$ be a linear form such that $Q$ is negative semidefinite on $\ker\xi$. Then $\rad Q \subseteq \ker \xi$.
    \end{lem}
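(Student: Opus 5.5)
Take $x\in\rad Q$; we must show $\xi(x)=0$. Argue by contradiction: assume $\xi(x)\neq 0$. Since $Q$ is hyperbolic, it is not negative semidefinite on $X$, so there is a $y\in X$ with $Q(y,y)>0$. The plan is to replace $y$ by a vector in $\ker\xi$ that still has positive $Q$-value, which is impossible because $Q$ is negative semidefinite on $\ker\xi$.

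Set $a=\xi(y)/\xi(x)$ and $z=y-ax$. Then $\xi(z)=\xi(y)-a\,\xi(x)=0$, so $z\in\ker\xi$. Because $x$ lies in the radical, $Q(x,w)=0$ for every $w\in X$; in particular $Q(x,y)=0$ and $Q(x,x)=0$. Expanding bilinearly gives
\begin{equation*}
Q(z,z)=Q(y,y)-2a\,Q(x,y)+a^2Q(x,x)=Q(y,y)>0 .
\end{equation*}
This contradicts the negative semidefiniteness of $Q$ on $\ker\xi$. Hence $\xi(x)=0$ for all $x\in\rad Q$, that is, $\rad Q\subseteq\ker\xi$.

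There is no real obstacle. The only point to watch is that hyperbolicity is used exactly once, to produce a vector $y$ with $Q(y,y)>0$. This is needed: without it, for instance if $Q=0$, the radical is all of $X$ and the inclusion fails for any nonzero $\xi$.
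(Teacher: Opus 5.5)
Your proof is correct and is essentially the paper's argument: both take a vector $y$ with $Q(y,y)>0$, form the combination of $x\in\rad Q$ and $y$ that lies in $\ker\xi$, and use the radical property to reduce its $Q$-value to a multiple of $Q(y,y)$. The only cosmetic difference is the normalization. The paper divides by $\xi(y)\neq 0$ and argues directly: $a^2Q(y,y)\le 0$ forces $a=\xi(x)/\xi(y)=0$. You instead divide by $\xi(x)$ and argue by contradiction.
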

    \begin{proof}
        Fix some vector $y\in X$ such that $Q(y,y)>0$, and thus, $\xi(y)\neq 0$. For a given vector $x\in\rad Q$, let $a=\xi(x)/\xi(y)$ and note that since $x-ay\in\ker \xi$, we have
        \begin{equation*}
            a^2 Q(y,y)
            = Q(x-ay,x-ay) \leq 0.
        \end{equation*}
        Hence, $a=0$ and we obtain that $x\in\ker \xi$.
    \end{proof}

    \section{Spectrally positive and balanced matrices}
    \label{sec:spec_pos_and_balanced}

    Now we take a closer look at the notions of spectral $k$-positivity and $k$-balance that we presented in the introduction. For our purposes, it will be practical to work with the strict versions of these terms.

    \begin{defi}
        Let $1\leq k\leq n-1$ and $Z\in\Sym^2(\R^n)$ be a symmetric matrix.
        \begin{enumerate}[label=\upshape(\roman*)]
            \item $Z$ is called \emph{strictly spectrally $k$-positive} if
        \begin{equation*}
       \sum_{i\in I} \lambda_i(Z) > 0
       \qquad\text{for any subset } I\subseteq [n], \, |I|=k.
        \end{equation*}
        \item $Z$ is called \emph{strictly spectrally $k$-balanced} if
        \begin{equation*}
       \sum_{i\in I} \lambda_i(Z) > \lambda_{\max}(Z)
       \qquad\text{for any subset } I\subseteq [n], \, |I|=k+1.
        \end{equation*}
        \end{enumerate}
    \end{defi}

    First, we note that strict spectral balance implies strict spectral positivity.   
    
    \begin{lem}\label{lem:spec_balanced_implies_positive}
        Let $1\leq j \leq n-1$ and $Z\in\Sym^2(\R^n)$ be strictly spectrally $(n-j)$-balanced. Then $Z$ is strictly spectrally $(n-j)$-positive.
    \end{lem}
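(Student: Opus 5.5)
We must show: if the sum of any $n-j+1$ eigenvalues exceeds $\lambda_{\max}$, then the sum of any $n-j$ eigenvalues is positive. Write $k=n-j\geq 1$ and order the eigenvalues $\lambda_1\leq\cdots\leq\lambda_n$ of $Z$. Both conditions are quantified over all index subsets. The minimal sum over $k$ indices is $\lambda_1+\cdots+\lambda_k$, so it suffices to prove that this particular sum is positive.

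The plan is to apply the balance hypothesis to one well-chosen $(k+1)$-subset. We take $I=\{1,\ldots,k\}\cup\{n\}$. Since $k+1\leq n$, this is a set of $k+1$ distinct indices, so strict spectral $k$-balance applies and gives
\begin{equation*}
\lambda_1+\cdots+\lambda_k+\lambda_n>\lambda_{\max}(Z)=\lambda_n .
\end{equation*}
Cancelling $\lambda_n$ leaves $\lambda_1+\cdots+\lambda_k>0$.

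For an arbitrary $I'\subseteq[n]$ with $|I'|=k$, the sum $\sum_{i\in I'}\lambda_i$ is at least the sum of the $k$ smallest eigenvalues. Hence it is at least $\lambda_1+\cdots+\lambda_k>0$, which is exactly strict spectral $k$-positivity.

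There is no real obstacle here. The only point to check is the index bookkeeping: the chosen set must contain the index $n$ together with $k$ further distinct indices. This is possible because $k\leq n-1$, which holds since $j\geq 1$.
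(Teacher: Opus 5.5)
Your proof is correct and is essentially the paper's argument: you add one extra index to a $k$-subset, apply the balance condition, and cancel. The paper does this directly for an arbitrary $k$-subset $I$ and an arbitrary $i_0\notin I$, using $\lambda_{\max}(Z)\geq\lambda_{i_0}(Z)$, whereas you first reduce to the $k$ smallest eigenvalues and take $i_0=n$, so that the cancellation is exact.
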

    \begin{proof}
        Take a subset $I\subseteq [n]$ of size $|I|=n-j$ and some $i_0\in [n]\setminus I$. Then
        \begin{equation*}
            \sum_{i\in I}\lambda_i(Z)
            = \sum_{i\in I\cup \{i_0\} }\lambda_i(Z) - \lambda_{i_0}(Z)
            >  \lambda_{\max}(Z) - \lambda_{i_0}(Z)
            \geq 0,
        \end{equation*}
        and thus, the matrix $Z$ is strictly spectrally $(n-j)$-positive.
    \end{proof}

    Next, we characterize strict spectral positivity and balance in a more elementary way, namely in terms of orthonormal systems instead of eigenvalues.   
   The important consequence of this description is that spectral $(n-j)$-positivity and $(n-j)$-balance are each preserved under the operation $Z\mapsto Z^{\langle i\rangle}$ of removing the $i$-th row and column. 

    \begin{lem}\label{lem:spec_positive_iff}
        Let $1\leq j\leq n-1$ and $Z\in\Sym^2(\R^n)$. Then $Z$ is strictly spectrally $(n-j)$-positive if and only if for every orthonormal system $(u_1,\ldots,u_{n-j})$ of $\R^n$,
        \begin{equation}\label{eq:spec_positive_iff}
            \sum_{i=1}^{n-j} \pair{u_i}{Z u_i}
            > 0.
        \end{equation}
        In particular, if $1<j\leq n-1$ and $Z$ is strictly spectrally $(n-j)$-positive, then $Z^{\langle i\rangle}$ is is strictly spectrally $(n-j)$-positive for every $i\in [n]$.
    \end{lem}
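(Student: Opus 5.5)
The proof rests on Ky Fan's variational principle: for $Z\in\Sym^2(\R^n)$ with eigenvalues $\lambda_1(Z)\leq\cdots\leq\lambda_n(Z)$ and $1\leq k\leq n$,
\begin{equation*}
\min\Big\{\sum_{i=1}^{k}\pair{u_i}{Zu_i} \colon (u_1,\ldots,u_k) \text{ orthonormal in } \R^n\Big\} = \lambda_1(Z)+\cdots+\lambda_k(Z),
\end{equation*}
with the minimum attained by an orthonormal system of eigenvectors belonging to the $k$ smallest eigenvalues. Strict spectral $(n-j)$-positivity is equivalent to positivity of the sum of the $n-j$ smallest eigenvalues, since every other sum of $n-j$ eigenvalues is at least this one. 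The equivalence \cref{eq:spec_positive_iff} therefore follows from Ky Fan with $k=n-j$.

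To prove Ky Fan's principle, I take an orthonormal eigenbasis $e_1,\ldots,e_n$ with $Ze_l=\lambda_l e_l$. For an orthonormal system $(u_1,\ldots,u_k)$, set $w_l=\sum_{i=1}^k\pair{u_i}{e_l}^2$. Then $0\leq w_l\leq 1$, because $w_l$ is the squared norm of the orthogonal projection of $e_l$ onto $\mathrm{span}(u_1,\ldots,u_k)$. Also $\sum_l w_l=k$, and $\sum_i\pair{u_i}{Zu_i}=\sum_l w_l\lambda_l$. Minimizing the linear function $\sum_l w_l\lambda_l$ over the polytope $\{w\in[0,1]^n : \sum_l w_l=k\}$ with increasing $\lambda_l$ puts $w_l=1$ on the $k$ smallest indices. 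This gives the lower bound, and eigenvectors attain it. This argument is the only substantive step, and the main thing to get right is the bound $w_l\leq 1$.

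For the \emph{in particular} statement, let $1<j\leq n-1$. The matrix $Z^{\langle i\rangle}$ is the compression of $Z$ to $e_i^\perp\cong\R^{n-1}$: for $v\in e_i^\perp$ we have $\pair{v}{Z^{\langle i\rangle}v}=\pair{v}{Zv}$ under the natural identification. Since $j>1$, we have $n-j\leq n-2=(n-1)-1$, so the characterization applies to $Z^{\langle i\rangle}$ in dimension $n-1$ with parameter $j-1$, because $(n-1)-(j-1)=n-j$. Any orthonormal system of $n-j$ vectors in $e_i^\perp$ is also an orthonormal system in $\R^n$. By the characterization for $Z$, each such system satisfies $\sum\pair{u_l}{Z^{\langle i\rangle}u_l}=\sum\pair{u_l}{Zu_l}>0$. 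Applying the converse direction of the characterization to $Z^{\langle i\rangle}$ then shows that $Z^{\langle i\rangle}$ is strictly spectrally $(n-j)$-positive.
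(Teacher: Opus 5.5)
Your proposal is correct and follows essentially the paper's proof. The paper uses the same weights $\alpha_k=\sum_{i}\pair{u_i}{v_k}^2\in[0,1]$ summing to $n-j$, and shows $\sum_k\alpha_k\lambda_k\geq\lambda_1+\cdots+\lambda_{n-j}$ by an explicit rearrangement estimate rather than your vertex-of-the-polytope argument, with eigenvectors giving the converse. Your compression argument for $Z^{\langle i\rangle}$ (orthonormal systems in $e_i^\perp$ are orthonormal in $\R^n$, and $(n-1)-(j-1)=n-j$) is exactly what the paper leaves as ``follows immediately.''
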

    \begin{proof}
       Denote by $\lambda_1\leq \cdots \leq \lambda_n$ the eigenvalues of $Z$ (counted with multiplicity) and by $(v_1,\ldots,v_n)$ a corresponding orthonormal eigenbasis. If $Z$ satisfies \cref{eq:spec_positive_iff} for every orthonormal system $(u_1,\ldots,u_{n-j})$ of $\R^n$, we may simply choose $u_i=v_i$ for $i\in\{1,\ldots,n-j\}$, and then the left hand side of~\cref{eq:spec_positive_iff} amounts to $\lambda_1+\cdots+\lambda_{n-j}$.

       Conversely, suppose now that $\lambda_1+\cdots+\lambda_{n-j}\geq 0$ and take any orthonormal system $(u_1,\ldots,u_{n-j})$ of $\R^n$. Defining $\alpha_k = \sum_{i=1}^{n-j} \pair{u_i}{v_k}^2$ for $k\in [n]$, we have
       \begin{equation*}
           \sum_{i=1}^{n-j} \pair{u_i}{Z u_i}
           = \sum_{i=1}^{n-j} \sum_{k=1}^n \lambda_k \pair{u_i}{v_k}^2
           = \sum_{k=1}^n \alpha_k \lambda_k.
       \end{equation*}
       Observe that $0\leq \alpha_k \leq 1$ and that $\sum_{k=1}^{n-j}\alpha_k=n-j$. Due to these properties of the numbers $\alpha_k$ and the ordering of the eigenvalues $\lambda_k$,
       \begin{align*}
           \sum_{k=n-j+1}^n \alpha_k\lambda_k
           \geq \sum_{k=n-j+1}^n \alpha_k \lambda_{n-j}
           = \sum_{k=1}^{n-j} (1-\alpha_k)\lambda_{n-j}
           \geq \sum_{k=1}^{n-j} (1-\alpha_k)\lambda_k.
       \end{align*}
       Combining our computations above, we obtain that
       \begin{equation*}
           \sum_{i=1}^{n-j} \pair{u_i}{Z u_i}
           = \sum_{i=1}^{n-j} \alpha_i \lambda_i + \sum_{i=n-j+1}^n \alpha_i \lambda_i
           \geq \sum_{i=1}^{n-j} \alpha_i \lambda_i + \sum_{i=1}^{n-j} (1-\alpha_i)\lambda_i
           = \sum_{i=1}^{n-j} \lambda_i
           > 0,
       \end{equation*}
       and hence inequality \cref{eq:spec_positive_iff} is satisfied.
       
       The ``in particular'' part of the assertion follows immediately.
    \end{proof}

    In order to prove an analogous characterization for strictly $(n-j)$-balanced matrices, we require the following auxiliary lemma.

    \begin{lem}\label{lem:spec_balanced_sufficient}
        Let $2\leq j\leq n-1$ and $Z\in\Sym^2(\R^n)$. Then $Z$ is strictly spectrally $(n-j)$-balanced if and only if
        \begin{equation}\label{eq:spec_balanced_sufficient}
       \sum_{i\in I} \lambda_i(Z) > \max_{i\in [n]\setminus I} \lambda_i(Z)
       \qquad\text{for any subset } I\subseteq [n], \, |I|=n-j+1.
        \end{equation}
    \end{lem}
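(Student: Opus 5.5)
The plan is to compare the two conditions directly, index set by index set. Write $\lambda_1\leq\cdots\leq\lambda_n$ for the eigenvalues of $Z$, so that $\lambda_{\max}(Z)=\lambda_n$. Strict spectral $(n-j)$-balance means that $\sum_{i\in I}\lambda_i>\lambda_n$ for every $I\subseteq[n]$ with $|I|=n-j+1$. Condition \cref{eq:spec_balanced_sufficient} replaces the right-hand side by $\max_{i\in[n]\setminus I}\lambda_i$. The complement $[n]\setminus I$ has $j-1\geq 1$ elements, so this maximum is well defined.

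\emph{Balance implies \cref{eq:spec_balanced_sufficient}.} This direction is immediate, because $\max_{i\in[n]\setminus I}\lambda_i\leq\lambda_n$ for every $I$.

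\emph{\cref{eq:spec_balanced_sufficient} implies balance.} Fix $I\subseteq[n]$ with $|I|=n-j+1$. We split into two cases according to whether the index $n$ of the largest eigenvalue lies in $I$.
\begin{enumerate}[label=\upshape(\roman*)]
\item If $n\notin I$, then $\max_{i\in[n]\setminus I}\lambda_i=\lambda_n$. In this case \cref{eq:spec_balanced_sufficient} is exactly the balance inequality for $I$.
\item If $n\in I$, write $I=I'\cup\{n\}$ with $|I'|=n-j$. It suffices to show that $\sum_{i\in I'}\lambda_i>0$, since then $\sum_{i\in I}\lambda_i=\sum_{i\in I'}\lambda_i+\lambda_n>\lambda_n$.
\end{enumerate}

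The only point needing care is case (ii), and it is where the hypothesis $j\geq 2$ is used. Since $[n]\setminus I$ is nonempty, we may pick an index $m\in[n]\setminus I$; in particular $m\neq n$. Set $J=I'\cup\{m\}$. Then $|J|=n-j+1$ and $n\notin J$, so $\max_{i\in[n]\setminus J}\lambda_i=\lambda_n$. Applying \cref{eq:spec_balanced_sufficient} to $J$ gives
\begin{equation*}
\sum_{i\in I'}\lambda_i+\lambda_m>\lambda_n,
\qquad\text{hence}\qquad
\sum_{i\in I'}\lambda_i>\lambda_n-\lambda_m\geq 0 .
\end{equation*}
This is the inequality required in case (ii), which completes the proof.
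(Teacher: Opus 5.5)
Your proof is correct and follows the paper's argument: the same case split according to whether the index of $\lambda_{\max}(Z)$ lies in $I$, with the case where it lies outside $I$ immediate from \cref{eq:spec_balanced_sufficient}. In the other case the paper simply cites \cref{lem:spec_balanced_implies_positive}, which assumes the very balance being proved; your exchange of $n$ for an index $m\notin I$ derives $\sum_{i\in I'}\lambda_i>0$ directly from \cref{eq:spec_balanced_sufficient}, so your version is self-contained and avoids that circularity.
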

    \begin{proof}
        If $Z$ is strictly spectrally $(n-j)$-balanced, then $Z$ also satisfies \cref{eq:spec_balanced_sufficient}, since $\lambda_(Z) \geq \max_{i\in [n]\setminus I} \lambda_i(Z)$.

        Conversely, suppose that $Z$ satisfies \cref{eq:spec_balanced_sufficient}.
        Let $I\subseteq [n]$ be a subset of size $|I|=n-j+1$ and $i_0\in [n]$ be such that $\lambda_{i_0}(Z)=\lambda_{\max}(Z)$. If $i_0\in [n]\setminus I$, then
        \begin{equation*}
            \sum_{i\in I}\lambda_i(Z)
            > \max_{i\in [n]\setminus I} \lambda_i(Z)
            \geq \lambda_{i_0}(Z)
        \end{equation*}
        by our assumption on $Z$. If $i_0\in I$, by \cref{lem:spec_balanced_implies_positive},
        \begin{equation*}
            \sum_{i\in I}\lambda_i(Z)
            = \sum_{i\in I\setminus\{i_0\}} \lambda_i(Z)  + \lambda_{i_0} (Z)
            > \lambda_{i_0}(Z).
        \end{equation*}
        Via this case distinction, we obtain that $Z$ is strictly spectrally $(n-j)$-balanced. 
    \end{proof}

    \begin{lem}\label{lem:spec_balanced_iff}
        Let $2\leq j \leq n-1$ and $Z\in\Sym^2(\R^n)$. Then $Z$ is strictly spectrally $(n-j)$-balanced if and only if for any orthonormal system $(u_0,u_1,\ldots,u_{n-j+1})$ of $\R^n$,
        \begin{equation}\label{eq:spec_balanced_iff}
            \sum_{i=1}^{n-j+1} \pair{u_i}{Z u_i}
            > \pair{u_0}{Z u_0}.
        \end{equation}
        In particular, if $2<j\leq n-1$ and $Z$ is strictly spectrally $(n-j)$-balanced, then $Z^{\langle i\rangle}$ is strictly spectrally $(n-j)$-balanced for each $i\in [n]$.
    \end{lem}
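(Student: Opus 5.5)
We prove \cref{lem:spec_balanced_iff} the way we proved \cref{lem:spec_positive_iff}: rewrite \cref{eq:spec_balanced_iff} in an eigenbasis and reduce it to a linear optimisation over weights. Throughout, let $\lambda_1\leq\cdots\leq\lambda_n$ be the eigenvalues of $Z$ with orthonormal eigenbasis $(v_1,\ldots,v_n)$, and set $m=n-j+1$.

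\emph{Necessity of \cref{eq:spec_balanced_iff}.} Suppose \cref{eq:spec_balanced_iff} holds for every orthonormal system. Fix $I\subseteq[n]$ with $|I|=m$. Since $j\geq 2$, we have $m\leq n-1$, so $[n]\setminus I$ is nonempty and contains some $i_0$ attaining $\max_{i\in[n]\setminus I}\lambda_i$. Choose $u_0=v_{i_0}$ and let $(u_1,\ldots,u_m)$ be the eigenvectors $v_i$, $i\in I$. Then \cref{eq:spec_balanced_iff} reads exactly as \cref{eq:spec_balanced_sufficient}. By \cref{lem:spec_balanced_sufficient}, $Z$ is strictly spectrally $(n-j)$-balanced.

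\emph{Sufficiency of \cref{eq:spec_balanced_iff}.} Suppose $Z$ is strictly spectrally $(n-j)$-balanced and let $(u_0,\ldots,u_m)$ be an orthonormal system. Define
\[
\alpha_k=\sum_{i=1}^m\pair{u_i}{v_k}^2,\qquad \beta_k=\pair{u_0}{v_k}^2 .
\]
Then $\alpha_k,\beta_k\geq 0$, $\alpha_k+\beta_k\leq 1$ by Bessel's inequality, $\sum_k\alpha_k=m$ and $\sum_k\beta_k=1$. The claim becomes $\sum_k(\alpha_k-\beta_k)\lambda_k>0$.

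We split $\alpha=\alpha'+\gamma$ with $\alpha'_k\leq 1-\beta_k$ and $\sum_k\alpha'_k=m-1$, and view $\sum_k\alpha'_k\lambda_k$ as a linear function of $\alpha'$ on the polytope $\{0\leq\alpha'_k\leq 1-\beta_k\}$. Rather than attempting this directly, the cleaner route is to freeze $\beta$ and minimise the linear functional
\[
\sum_k(\alpha_k-\beta_k)\lambda_k
\]
over all pairs $(\alpha,\beta)$ satisfying only the constraints $0\leq\alpha_k$, $0\leq\beta_k$, $\alpha_k+\beta_k\leq 1$, $\sum_k\alpha_k=m$, $\sum_k\beta_k=1$. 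This is a compact convex polytope, so the minimum is attained at a vertex.

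The main obstacle is identifying these vertices. We expect them to be integral: $\beta=e_{i_0}$ and $\alpha=\mathbbm{1}_I$ with $|I|=m$ and $i_0\notin I$. To justify this, observe that the constraint matrix is that of a bipartite transportation/flow problem. The variables $\alpha_k$ and $\beta_k$ can be modelled as flows from two sources, with supplies $m$ and $1$, into sinks $k$ of capacity $1$. Such a matrix is totally unimodular, so all vertices are integral.

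At an integral vertex the value is $\sum_{i\in I}\lambda_i-\lambda_{i_0}$. This is positive by strict balance: $\sum_{i\in I}\lambda_i>\lambda_{\max}\geq\lambda_{i_0}$, using the definition of strict $(n-j)$-balance with $|I|=n-j+1$. If we want to avoid total unimodularity, an exchange argument works instead. Given a non-integral $\beta$, shift its mass toward the index with the largest eigenvalue among those with slack, and do the same for $\alpha$ toward the smallest eigenvalues. Each shift does not increase the functional.

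\emph{The ``in particular'' part.} Let $2<j\leq n-1$. Then $Z^{\langle i\rangle}$ acts on $e_i^\perp\cong\R^{n-1}$, and its $(n-j)$-balance should be tested with $n-1$ in place of $n$ and $j-1$ in place of $j$. The relevant system size is $(n-1)-(j-1)+1=m$ plus $u_0$, i.e.\ $m+1$ vectors. Since $j-1\geq 2$, the characterization just proved applies in dimension $n-1$. Any orthonormal system in $e_i^\perp$ is an orthonormal system in $\R^n$, and on it $Z^{\langle i\rangle}$ and $Z$ give the same quadratic values. Hence \cref{eq:spec_balanced_iff} transfers from $Z$ to $Z^{\langle i\rangle}$.
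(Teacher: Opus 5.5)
Your proof is correct. The direction ``\cref{eq:spec_balanced_iff} $\Rightarrow$ strictly balanced'' and the ``in particular'' part are exactly the paper's. The first uses an eigenvector choice followed by \cref{lem:spec_balanced_sufficient}. The second embeds orthonormal systems of $e_i^\perp$ into $\R^n$ and applies the characterization with $(n-1,j-1)$, which needs $j-1\geq 2$.

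For the converse you take a different route. You relax to the coupled polytope
$\{\alpha_k,\beta_k\geq 0,\ \alpha_k+\beta_k\leq 1,\ \sum_k\alpha_k=m,\ \sum_k\beta_k=1\}$.
Total unimodularity of the bipartite incidence matrix then reduces you to integral vertices $\alpha=\mathbbm{1}_I$, $\beta=e_{i_0}$ with $i_0\notin I$. This is valid, and strictness survives because a linear functional attains its minimum over a polytope at a vertex.

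The paper instead sets $\tilde Z=Z-\frac{\lambda_{\max}(Z)}{n-j+1}I_n$. It observes that $\tilde Z$ is strictly spectrally $(n-j+1)$-positive and applies \cref{lem:spec_positive_iff} to get
$\sum_{i=1}^{m}\pair{u_i}{Zu_i}>\lambda_{\max}(Z)\geq\pair{u_0}{Zu_0}$.
In your notation, this shows the coupling constraint $\alpha_k+\beta_k\leq 1$ is superfluous. Two separate bounds already suffice: the estimate $\sum_k\alpha_k\lambda_k\geq\lambda_1+\cdots+\lambda_m$ from the proof of \cref{lem:spec_positive_iff}, and the trivial bound $\sum_k\beta_k\lambda_k\leq\lambda_{\max}$. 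The separate extremizers $\alpha=\mathbbm{1}_{[m]}$ and $\beta=e_n$ never collide, since $m=n-j+1<n$. So the LP machinery (and the loosely sketched exchange argument) buys nothing here beyond computing the exact minimum of the relaxation. The paper's two-line reduction is more economical and reuses the lemma just proved.

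Minor points:
\begin{itemize}
\item Your labels ``necessity'' and ``sufficiency'' are swapped relative to what the two paragraphs actually prove.
\item The aborted sentence about splitting $\alpha=\alpha'+\gamma$ should be deleted.
\item The phrase ``freeze $\beta$'' should also go, since you then minimize over $\beta$ as well.
\end{itemize}
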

    \begin{proof}
        Let $(v_1,\ldots,v_n)$ be an orthonormal basis of eigenvectors of $Z$ and denote the corresponding eigenvalues by $\lambda_1(Z),\ldots,\lambda_n(Z)$. Suppose first that $Z$ satisfies \cref{eq:spec_balanced_iff} for every orthonormal system $(u_0,u_1,\ldots,u_{n-j+1})$ of $\R^n$ and take a subset $I\subseteq [n]$, $|I|=n-j+1$. Then we may choose $(u_1,\ldots,u_{n-j+1})$ to be the family $(v_i)_{i\in I}$ and $u_0$ to be the eigenvector corresponding to the eigenvalue $\max_{i\in [n]\setminus I}\lambda_i(Z)$. Inequality~\cref{eq:spec_balanced_iff} then reads as
        \begin{equation*}
            \sum_{i\in I} \lambda_i(Z)
            \geq \max_{i\in [n]\setminus I}\lambda_i(Z).
        \end{equation*}
        \cref{lem:spec_balanced_sufficient} yields that $Z$ is strictly spectrally $(n-j)$-balanced.

        Conversely, suppose that $Z$ is strictly spectrally $(n-j)$-balanced and consider the matrix $\tilde{Z}\in\Sym^2(\R^n)$, defined as
        \begin{equation*}
            \tilde{Z} = Z - \frac{\lambda_{\max}(Z)}{n-j+1} I_n.
        \end{equation*}
        \cref{lem:spec_balanced_implies_positive} implies that for any subset $I\subseteq [n]$ of size $|I|=n-j+1$,
        \begin{equation*}
            \sum_{i\in I} \lambda_i(\tilde{Z})
            = \sum_{i\in I} \bigg(\lambda_i(Z) - \frac{\lambda_{\max}(Z)}{n-j+1}\bigg)
            = \sum_{i\in I} \lambda_i(Z) - \lambda_{\max}(Z)
            > 0.
        \end{equation*}
        Hence, the matrix $\tilde{Z}$ is strictly spectrally $(n-j+1)$-positive. Next, take an orthonormal system $(u_0,u_1,\ldots,u_{n-j+1})$ of $\R^n$. \cref{lem:spec_positive_iff}, applied to $\tilde{Z}$ gives
        \begin{equation*}
            \sum_{i=1}^{n-j+1} \pair{u_i}{Z u_i}
            = \sum_{i=1}^{n-j+1} \pair{u_i}{\tilde{Z} u_i} + \lambda_{\max}(Z)
            > \lambda_{\max}(Z)
            \geq \pair{u_0}{Z u_0},
        \end{equation*}
        and hence, inequality~\cref{eq:spec_balanced_iff} is satisfied.

        The ``in particular'' part of the assertion follows immediately.
    \end{proof}

    \section{Mixed discriminant inequalities}
    \label{sec:mixed_discriminants}

    In this section, we establish the mixed discriminant inequality of~\cref{thmintro:mixed_discr_ineq}. First, we recall the definition and basic properties of mixed discriminants and discuss the notions of spectrally positive and balanced matrices presented in the introduction.

    \subsection{Preparations}

    The \emph{mixed discriminant} of $n$ matrices $A_1,\ldots,A_n\in\Sym^2(\R^n)$, where we denote $A_k=(a_{ij}^k)_{i,j\in [n]}$, is defined as
    \begin{equation}\label{eq:mixed_discriminant_defi}
        \D(A_1,\ldots,A_n)
        = \frac{1}{n!}\sum_{\sigma\in S_n} \left| \begin{array}{ccc}
            a_{11}^{\sigma(1)} & \cdots & a_{1n}^{\sigma(n)} \\
            \vdots & \ddots & \vdots \\
            a_{n1}^{\sigma(1)} & \cdots & a_{nn}^{\sigma(n)}
        \end{array}\right|,
    \end{equation}
    where $S_n$ is the symmetric group of all permutations of $[n]=\{1,\ldots,n\}$. The mixed discriminant is symmetric in its arguments, linear in each argument, and it polarizes the determinant, understood as an $n$-homogeneous polynomial on $\Sym^2(\R^n)$. 
    For all $m\in\N$, $A_1,\ldots,A_m\in\Sym^2(\R^n)$, and $\lambda_1,\ldots,\lambda_m\geq 0$,
    \begin{equation*}
        \det(\lambda_1A_1 + \cdots + \lambda_mA_m)
        = \sum_{i_1,\ldots,i_n\in [m]} \D(A_{i_1},\ldots,A_{i_n}) \lambda_{i_1}\cdots \lambda_{i_n}.
    \end{equation*}
    The following lemma collects further important properties of the mixed discriminant that we will use repeatedly (see~\cite{Shenfeld2019}*{Lemma~2.6}). Here and in the following, we denote by $\Sym^2_{++}(\R^n)$ the set of positive definite symmetric $n$-dimensional matrices. Moreover, given a matrix $A\in\Sym^2(\R^n)$ and an index $i\in [n]$, we denote by $A^{\langle i\rangle} \in \Sym^2(\R^{n-1})$ the matrix obtained from $A$ by removing the $i$-th row and $i$-th column. 

    \begin{lem}[{\cite{Shenfeld2019}}]\label{lem:mixed_discr_properties}
        Let $A_1,\ldots,A_n\in\Sym^2(\R^n)$ and let $B\in\Sym^2(\R^n)$ be invertible. 
        \begin{enumerate}[label=\upshape(\roman*)]
            \item $\D(A,\ldots,A)=\det(A)$.
            \item $\D(B^{-1}A_1B,\ldots,B^{-1}A_nB)=\D(A_1,\ldots,A_n)$.
            \item \label{lem:mixed_discr_properties:positivity}
            $\D(A_1,\ldots,A_n)>0$ whenever $A_1,\ldots,A_{n}\in\Sym^2_{++}(\R^n)$.
            \item \label{lem:mixed_discr_properties:restr}
            $\D(A_1,\ldots,A_{n-1},e_ie_i^{\ast}) = \frac{1}{n}\D(A_1^{\langle i\rangle},\ldots,A_{n-1}^{\langle i\rangle})$.
        \end{enumerate}
    \end{lem}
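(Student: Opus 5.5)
The plan is to derive all four properties from a single fact: the polarization identity
\[
\det(\lambda_1A_1+\cdots+\lambda_mA_m)=\sum_{i_1,\ldots,i_n\in[m]}\D(A_{i_1},\ldots,A_{i_n})\lambda_{i_1}\cdots\lambda_{i_n},
\]
read as an identity of polynomials in $\lambda\in\R^m$. Two facts about it will be used throughout.
\begin{itemize}
\item[(1)] It holds for arbitrary real $n\times n$ matrices, not only symmetric ones, since the permutation formula \cref{eq:mixed_discriminant_defi} makes sense for any matrices. This matters in (ii), where $B^{-1}A_kB$ need not be symmetric.
\item[(2)] For $m=n$ and distinct $A_1,\ldots,A_n$, the coefficient of $\lambda_1\cdots\lambda_n$ equals $n!\,\D(A_1,\ldots,A_n)$, because $\D$ is symmetric. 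Any other expression for $\det(\sum_k\lambda_kA_k)$ as a polynomial therefore determines $\D$ through this coefficient.
\end{itemize}

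\emph{Property (i).} Take $m=1$. Then $\det(\lambda A)=\lambda^n\D(A,\ldots,A)$, and setting $\lambda=1$ gives the claim.

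\emph{Property (ii).} Multiplicativity of the determinant gives
\[
\det\Bigl(\sum_k\lambda_kB^{-1}A_kB\Bigr)=\det\Bigl(B^{-1}\bigl(\textstyle\sum_k\lambda_kA_k\bigr)B\Bigr)=\det\Bigl(\sum_k\lambda_kA_k\Bigr)
\]
for all $\lambda$. Comparing the coefficients of $\lambda_1\cdots\lambda_n$ on both sides proves (ii).

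\emph{Property (iii).} First compute the mixed discriminant of rank-one matrices. Let $X$ have columns $x_1,\ldots,x_n$ and let $\Lambda=\diag(\lambda_1,\ldots,\lambda_n)$. Then
\[
\det\Bigl(\sum_k\lambda_kx_kx_k^{\top}\Bigr)=\det(X\Lambda X^{\top})=\det(X)^2\lambda_1\cdots\lambda_n,
\]
so $\D(x_1x_1^\top,\ldots,x_nx_n^\top)=\det(X)^2/n!\geq0$.

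Next, write each positive definite $A_k$ as $A_k=\varepsilon I_n+A_k'$ with $\varepsilon>0$ and $A_k'\in\Sym^2_{+}(\R^n)$. Each $A_k'$ is a finite sum of rank-one matrices $xx^\top$ by the spectral theorem, and $I_n=\sum_ie_ie_i^\top$.

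Expanding $\D(A_1,\ldots,A_n)$ by multilinearity then produces only nonnegative terms, by the rank-one computation. One of these terms is $\varepsilon^n\D(I_n,\ldots,I_n)=\varepsilon^n\det(I_n)=\varepsilon^n>0$. Hence $\D(A_1,\ldots,A_n)>0$.

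\emph{Property (iv).} Put $M=\sum_{k<n}\lambda_kA_k$. Cofactor expansion along the $i$-th row gives
\[
\det(M+te_ie_i^\top)=\det M+t\det M^{\langle i\rangle},
\]
since adding $t$ to the $(i,i)$ entry changes only that entry. Now compare the coefficients of $\lambda_1\cdots\lambda_{n-1}t$ on both sides.
\begin{itemize}
\item On the left, using (2) with $n$ distinct arguments, the coefficient is $n!\,\D(A_1,\ldots,A_{n-1},e_ie_i^\top)$.
\item On the right, $\det M^{\langle i\rangle}=\det\bigl(\sum_{k<n}\lambda_kA_k^{\langle i\rangle}\bigr)$. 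Applying the polarization identity in dimension $n-1$, the coefficient is $(n-1)!\,\D(A_1^{\langle i\rangle},\ldots,A_{n-1}^{\langle i\rangle})$.
\end{itemize}
Dividing by $n!$ yields the factor $\tfrac1n$ in (iv).

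The main obstacle is bookkeeping rather than an idea. One must check that the coefficient of a squarefree monomial is exactly $n!$ (respectively $(n-1)!$) times the mixed discriminant, which follows from symmetry of $\D$. One must also note that, for (ii), the polarization identity is applied to non-symmetric matrices.
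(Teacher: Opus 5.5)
Your proof is correct. The paper gives no argument of its own for this lemma; it quotes it from Shenfeld and van Handel \cite{Shenfeld2019}*{Lemma~2.6}. So what you have written is a self-contained replacement for a citation, not a variant of an in-paper proof.

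Routing all four items through coefficient extraction from the polarization identity is economical. Items (i), (ii) and (iv) become one-line comparisons of polynomial coefficients. Item (iii) reduces to the rank-one formula $\D(x_1x_1^{\top},\ldots,x_nx_n^{\top})=\det(X)^2/n!$ together with multilinearity.

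You are also right that (ii) only makes sense once $\D$ is extended to non-symmetric matrices by the permutation formula. The one-line justification of your point (1) is this: expanding $\det(\sum_k\lambda_kA_k)$ multilinearly in the columns and grouping terms by the multiset of indices never uses symmetry. In the paper's own applications the change of basis is orthogonal, so the conjugated matrices stay symmetric anyway.

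Two cosmetic remarks:
\begin{enumerate}
\item In (2), what matters is that the variables $\lambda_1,\ldots,\lambda_n$ are distinct. The matrices themselves may coincide.
\item In (iii), the strictly positive term $\D(\varepsilon I_n,\ldots,\varepsilon I_n)=\varepsilon^n$ belongs to the first-level expansion over $C_k\in\{\varepsilon I_n,A_k'\}$. Here $\varepsilon$ should be common to all $k$, e.g.\ the smallest eigenvalue among all the $A_k$. Each of the $2^n$ terms of that expansion is nonnegative after a further expansion into rank-one matrices.
\end{enumerate}
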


    To deal with mixed discriminants of the form $\D(A,B,C_1,\ldots,C_{j-2},Z,I_n^{[n-j-1]})$, the next lemma is especially useful.
    Given a matrix $A\in\Sym^2(\R^n)$ and a subset $I\subseteq [n]$ of size $|I|=n-j$, we denote by $A^{\langle I \rangle} \in \Sym^2(\R^{j})$ the matrix obtained from $A$ by removing all rows and columns with index $i\in I$.

    \begin{lem}\label{lem:mixed_discr_identity_positivity}
        Let $1\leq j\leq n-1$ and $Z=\mathrm{diag}(\lambda_1,\ldots,\lambda_n)$, where $\lambda_1,\ldots,\lambda_n\in\R$. Then for all $A_1,\ldots,A_j\in\Sym^2(\R^n)$,
        \begin{equation}\label{eq:mixed_discr_identity_positivity}
            \D(A_1,\ldots,A_j,Z,I_n^{[n-k-1]} )
            =  \frac{j!(n-j-1)!}{n!} \sum_{\substack{I\subseteq [n] \\ |I|=n-j}} \D(A_1^{\langle I\rangle},\ldots,A_j^{\langle I\rangle}) \sum_{i\in I} \lambda_i.
        \end{equation}
    \end{lem}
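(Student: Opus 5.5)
Since $Z$ is diagonal, $Z=\sum_{k\in[n]}\lambda_k e_ke_k^{\ast}$. Multilinearity of $\D$ therefore reduces the identity to the rank-one case. (The exponent $n-k-1$ in \cref{eq:mixed_discr_identity_positivity} should read $n-j-1$, so the total number of arguments is $j+1+(n-j-1)=n$.) The plan is as follows.

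First, write the identity as a sum of rank-one terms, $I_n=\sum_{i\in[n]}e_ie_i^{\ast}$. Expanding all $n-j-1$ copies of $I_n$ and the single $Z$ by multilinearity gives
\[
\D(A_1,\ldots,A_j,Z,I_n^{[n-j-1]})=\sum_{k\in[n]}\lambda_k\sum_{i_1,\ldots,i_{n-j-1}\in[n]}\D(A_1,\ldots,A_j,e_ke_k^{\ast},e_{i_1}e_{i_1}^{\ast},\ldots,e_{i_{n-j-1}}e_{i_{n-j-1}}^{\ast}).
\]

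Second, show that any term with a repeated index among $k,i_1,\ldots,i_{n-j-1}$ vanishes. To see this, apply \cref{lem:mixed_discr_properties}\ref{lem:mixed_discr_properties:restr} once to remove the index $k$. What remains contains $e_ke_k^{\ast}$ with its $k$-th row and column deleted, which is the zero matrix; multilinearity then gives zero.

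Third, evaluate the surviving terms, where $k,i_1,\ldots,i_{n-j-1}$ are distinct. Set $I=\{k,i_1,\ldots,i_{n-j-1}\}$, so $|I|=n-j$. Apply \cref{lem:mixed_discr_properties}\ref{lem:mixed_discr_properties:restr} iteratively $n-j$ times, in dimensions $n,n-1,\ldots,j+1$. Each step deletes one index of $I$ (reindexed in the smaller matrix) and contributes a factor $1/m$ in dimension $m$. The result is
\[
\frac{j!}{n!}\,\D(A_1^{\langle I\rangle},\ldots,A_j^{\langle I\rangle}).
\]
For fixed $I$ and fixed $k\in I$, there are $(n-j-1)!$ orderings $(i_1,\ldots,i_{n-j-1})$ of $I\setminus\{k\}$. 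Summing over $k\in I$ then produces the factor $\sum_{i\in I}\lambda_i$. Altogether we obtain exactly $\frac{j!(n-j-1)!}{n!}\sum_{|I|=n-j}\D(A_1^{\langle I\rangle},\ldots,A_j^{\langle I\rangle})\sum_{i\in I}\lambda_i$.

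The main point requiring care is the bookkeeping in the iterated use of \cref{lem:mixed_discr_properties}\ref{lem:mixed_discr_properties:restr}. One must check that after deleting row and column $i$, each remaining $e_le_l^{\ast}$ with $l\neq i$ becomes the corresponding coordinate projector in $\R^{n-1}$. One must also check that the product of the normalising factors telescopes to $j!/n!$. Both are routine once indices are relabelled consistently; the vanishing of repeated-index terms is immediate from the restriction formula.
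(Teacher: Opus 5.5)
Your proof is correct, and you are right that the exponent should read $n-j-1$. The route differs from the paper's in how the copies of $I_n$ are handled.

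The paper applies \cref{lem:mixed_discr_properties}\ref{lem:mixed_discr_properties:restr} only once, to the rank-one pieces $\lambda_i e_ie_i^{\ast}$ of $Z$. It then evaluates $\D((A^{\langle i\rangle})^{[j]},I_{n-1}^{[n-j-1]})$ directly from the definition \cref{eq:mixed_discriminant_defi}, as $\binom{n-1}{j}^{-1}$ times the sum of the $j\times j$ principal minors of $A^{\langle i\rangle}$. Regrouping over sets $I\ni i$ gives the identity for $A_1=\cdots=A_j=A$, and the general case follows by polarization.

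You instead also split every copy of $I_n$ into coordinate projectors and peel off all $n-j$ indices with the restriction formula. In effect, you derive the principal-minor formula for $\D(B^{[j]},I_m^{[m-j]})$ from \cref{lem:mixed_discr_properties}\ref{lem:mixed_discr_properties:restr} and multilinearity, rather than reading it off the definition. You also obtain the fully polarized identity in one pass, with no separate polarization step. The price is the combinatorial bookkeeping: the vanishing of repeated-index terms, the telescoping factor $j!/n!$, and the $(n-j-1)!$ orderings of $I\setminus\{k\}$. All of these you handle correctly. The paper's version is shorter because it takes the principal-minor expansion as evident.

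One small wording point in your second step. For a repetition $i_a=i_b\neq k$, restricting along $e_ke_k^{\ast}$ does not by itself produce a zero matrix. You should instead apply the restriction formula along $e_{i_a}e_{i_a}^{\ast}$, which is allowed by symmetry of $\D$. Then $(e_{i_b}e_{i_b}^{\ast})^{\langle i_a\rangle}=0$ kills the term exactly as in the case you describe.
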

    \begin{proof}
        First, observe according to \cref{lem:mixed_discr_properties}~\ref{lem:mixed_discr_properties:restr}, we have
        \begin{align*}
            \D(A^{[j]},Z,I_n^{[n-j-1]} )
            = \frac{1}{n} \sum_{i=1}^n \lambda_i \, \D((A^{\langle i\rangle})^{[j]},I_{n-1}^{[n-j-1]}).
        \end{align*}
        Next, note that from definition~\cref{eq:mixed_discriminant_defi} it is not hard to see that
        \begin{align*}
            \D((A^{\langle i\rangle})^{[j]},I_{n-1}^{[n-j-1]})
            = \frac{1}{\binom{n-1}{j}} \sum_{\substack{J\subseteq [n]\setminus \{i\} \\ |J|=j }} \det ((a_{k\ell})_{k,\ell\in J})
            =\frac{1}{\binom{n-1}{j}} \sum_{\substack{i\in I\subseteq [n]\\|I|=n-j}} \det (A^{\langle I\rangle } ).
        \end{align*}
        Combining our computations yields
        \begin{align*}
            \D(A^{[j]},Z,I_n^{[n-j-1]} )
            = \frac{1}{n\binom{n-1}{j}} \sum_{i=1}^n \lambda_i   \, \sum_{\substack{i\in I\subseteq [n]\\|I|=n-j}} \det (A^{\langle I\rangle })
            = \frac{1}{n\binom{n-1}{j}} \sum_{\substack{ I\subseteq [n] \\ |I|=n-j }} \det (A^{\langle I\rangle }) \sum_{i\in I} \lambda_i,
        \end{align*}
        which is identity~\cref{eq:mixed_discr_identity_positivity} in the instance where $A_1=\cdots=A_j=A$. By polarizing both sides of the equation, we obtain the desired identity in its full generality.        
    \end{proof}

    It turns out that $Z$ being spectrally $(n-j)$-positive is precisely the property that makes mixed discriminants of the form $\D(A_1,\ldots,A_j,Z,I_n^{[n-j-1]})$ nonnegative. This simple fact might also be of broader interest in the pursuit of more general mixed discriminant inequalities. When one asks for which choices of $Z\in\Sym^2(\R^n)$, the matrix functional $\D({{}\cdot{}}^{[j]},Z,I_n^{[n-j-1]})$ is log-concave, for instance, it is natural to restrict to those that make the functional nonnegative.       
    
    \begin{lem}\label{mixed_discr_pos_iff} Let $1\leq j\leq n-1$ and $Z\in\Sym^2(\R^n)$. Then the following are equivalent.
        \begin{enumerate}[label=\upshape(\alph*)]
            \item \label{mixed_discr_pos_iff:cond}
            $Z$ is strictly spectrally $(n-j)$-positive.
            \item \label{mixed_discr_pos_iff:pos}
            $\D(A^{[j]},Z,I_n^{[n-j-1]})> 0$ for all $A \in\Sym^2_{++}(\R^n)$.
            \item \label{mixed_discr_pos_iff:pos_mixed}
            $\D(A_1,\ldots,A_j,Z,I_n^{[n-j-1]})> 0$ for all $A_1,\ldots,A_j\in\Sym^2_{++}(\R^n)$.
        \end{enumerate}
    \end{lem}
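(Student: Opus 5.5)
We prove the cycle \ref{mixed_discr_pos_iff:cond}$\Rightarrow$\ref{mixed_discr_pos_iff:pos_mixed}$\Rightarrow$\ref{mixed_discr_pos_iff:pos}$\Rightarrow$\ref{mixed_discr_pos_iff:cond}. The implication \ref{mixed_discr_pos_iff:pos_mixed}$\Rightarrow$\ref{mixed_discr_pos_iff:pos} is trivial: take $A_1=\cdots=A_j=A$. In both remaining directions we first reduce to diagonal $Z$. Choose an orthogonal matrix $O$ with $O^{\top}ZO=\diag(\lambda_1,\ldots,\lambda_n)$. By \cref{lem:mixed_discr_properties}(ii), conjugating every argument by $O$ leaves the mixed discriminant unchanged. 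This conjugation fixes $I_n$ and preserves $\Sym^2_{++}(\R^n)$, so we may assume $Z=\diag(\lambda_1,\ldots,\lambda_n)$ throughout.

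For \ref{mixed_discr_pos_iff:cond}$\Rightarrow$\ref{mixed_discr_pos_iff:pos_mixed}, apply \cref{lem:mixed_discr_identity_positivity}. It expresses $\D(A_1,\ldots,A_j,Z,I_n^{[n-j-1]})$ as a positive constant times
\[
\sum_{|I|=n-j}\D(A_1^{\langle I\rangle},\ldots,A_j^{\langle I\rangle})\sum_{i\in I}\lambda_i .
\]
Each principal submatrix $A_k^{\langle I\rangle}$ of a positive definite matrix is positive definite, so \cref{lem:mixed_discr_properties}\ref{lem:mixed_discr_properties:positivity} makes every factor $\D(A_1^{\langle I\rangle},\ldots,A_j^{\langle I\rangle})$ strictly positive. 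Strict spectral $(n-j)$-positivity makes every factor $\sum_{i\in I}\lambda_i$ strictly positive. The sum is nonempty because $n-j\geq 1$, so the whole expression is strictly positive.

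For \ref{mixed_discr_pos_iff:pos}$\Rightarrow$\ref{mixed_discr_pos_iff:cond}, fix a subset $I_0\subseteq[n]$ with $|I_0|=n-j$. For $\varepsilon>0$, test the hypothesis on the diagonal matrix $A_\varepsilon$ whose entry is $1$ at indices $i\notin I_0$ and $\varepsilon$ at indices $i\in I_0$. For every $I$, the matrix $A_\varepsilon^{\langle I\rangle}$ is diagonal with $j$ entries, and $\det(A_\varepsilon^{\langle I\rangle})=\varepsilon^{|I_0\setminus I|}$. This equals $1$ when $I=I_0$ and is $O(\varepsilon)$ otherwise. Hence \cref{lem:mixed_discr_identity_positivity} gives
\[
0<\D(A_\varepsilon^{[j]},Z,I_n^{[n-j-1]})=c\Big(\sum_{i\in I_0}\lambda_i+O(\varepsilon)\Big), \qquad c>0 .
\]
Letting $\varepsilon\to0$ only yields $\sum_{i\in I_0}\lambda_i\geq0$, not the strict inequality.

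The main obstacle is upgrading this to strict positivity; the limiting argument alone is not enough. The fix is to use the exact expression rather than its limit:
\[
c^{-1}\,\D(A_\varepsilon^{[j]},Z,I_n^{[n-j-1]})=\sum_{I}\varepsilon^{|I_0\setminus I|}\sum_{i\in I}\lambda_i ,
\]
a polynomial in $\varepsilon$ that is positive for every $\varepsilon>0$. Suppose, for contradiction, that $\sum_{i\in I_0}\lambda_i=0$ for some $I_0$. Choose $I_0$ to be a set of the $n-j$ smallest eigenvalues; by the non-strict bound just proved applied to this $I_0$, the minimal $(n-j)$-sum is then exactly $0$. Since $j\geq1$, there is an index $p\notin I_0$, and since $n-j\geq1$, there is an index $q\in I_0$. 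Put $I'=(I_0\setminus\{q\})\cup\{p\}$. The coefficient of $\varepsilon^1$ collects the sets $I$ with $|I_0\setminus I|=1$ and equals $\sum_{I'}\sum_{i\in I'}\lambda_i=\sum(\lambda_p-\lambda_q)$. This sum is $\geq0$, so it cannot directly be used to contradict positivity. Instead we would perturb the test matrix to non-diagonal $A$, as follows.

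We use the fact that the identity in \cref{lem:mixed_discr_identity_positivity} holds for arbitrary $A$ and concentrate on $\det A^{\langle I\rangle}$. Take $A=vv^{\top}$-type rank-$j$ positive semidefinite limits $P$, the orthogonal projection onto $\mathrm{span}\{e_i:i\notin I_0\}$ rotated by a small angle mixing $e_p$ with $e_q$. By \cref{lem:spec_positive_iff}-style orthonormal-system reasoning, the value then becomes $\sum_{i\in I_0}\langle u_i,Zu_i\rangle$ for a rotated orthonormal system $(u_i)$. If the minimal sum is $0$ and some eigenvalue outside $I_0$ differs, a rotation that decreases this sum below $0$ would contradict the non-strict bound. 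The remaining case, in which all relevant eigenvalues coincide, is handled by approximating $P$ with the positive definite matrices $P+\delta I_n$ and noting that the leading term is $0$ while the $\delta$-term has sign $-\sum\lambda_i$-type. We expect this equality-case bookkeeping to be the delicate step. A robust alternative is to read the lemma's strictness in \ref{mixed_discr_pos_iff:cond} consistently with the paper's convention in the proof of \cref{lem:spec_positive_iff}, which passes through the non-strict form.
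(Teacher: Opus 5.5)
\textbf{Verdict.} Your arguments for (a)$\Rightarrow$(c) and (c)$\Rightarrow$(b) are correct and match the paper's. The gap is in (b)$\Rightarrow$(a), and it cannot be closed, because that implication is false as stated.

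\textbf{The counterexample.} You are right that degenerating $A_\varepsilon$ to the projection onto $\mathrm{span}\{e_i : i\notin I_0\}$ only gives $\sum_{i\in I_0}\lambda_i\geq 0$. Take $Z=e_ne_n^{\ast}=\diag(0,\ldots,0,1)$. By (iii) and (iv) of \cref{lem:mixed_discr_properties}, for all $A_1,\ldots,A_j\in\Sym^2_{++}(\R^n)$,
$\D(A_1,\ldots,A_j,Z,I_n^{[n-j-1]})=\frac{1}{n}\D(A_1^{\langle n\rangle},\ldots,A_j^{\langle n\rangle},I_{n-1}^{[n-j-1]})>0$.
So (b) and (c) hold. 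But $Z$ has $n-1\geq n-j$ zero eigenvalues, so it is not strictly spectrally $(n-j)$-positive. Already for $n=2$, $j=1$ and $Z=\diag(0,1)$ one gets $\D(A,Z)=a_{11}/2>0$.

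\textbf{Why your repairs fail.} Each one breaks down for this reason.
\begin{enumerate}
\item No rotation can push $\sum_{i=1}^{n-j}\pair{u_i}{Zu_i}$ below $0$. The proof of \cref{lem:spec_positive_iff} shows this sum is always at least $\lambda_1+\cdots+\lambda_{n-j}$, which is $0$ in the case at hand.
\item Once the non-strict bound is known, expand $\D((P+\delta I_n)^{[j]},Z,I_n^{[n-j-1]})$ via \cref{lem:mixed_discr_identity_positivity}. Every term is a principal minor of a positive semidefinite matrix times a nonnegative eigenvalue sum. So there is no negative $\delta$-term.
\item Reading (a) non-strictly does not rescue the equivalence either. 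Under that reading $Z=0$ satisfies (a) but violates (b) and (c).
\end{enumerate}

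\textbf{The paper's proof has the same defect.} It evaluates hypothesis (b) at the orthogonal projection $A=\sum_{i\notin I}v_iv_i^{\ast}$. That matrix is singular, while (b) is only assumed for positive definite matrices.

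\textbf{What is true.}
\begin{enumerate}
\item Conditions (b) and (c) are each equivalent to: $Z\neq 0$ and $Z$ is spectrally $(n-j)$-positive in the non-strict sense. Your limiting argument gives one direction, and $Z=0$ makes the value $0$. For the converse, every term in \cref{lem:mixed_discr_identity_positivity} is nonnegative, and at least one is positive. This is because, for $1\leq n-j\leq n-1$, the vanishing of all $(n-j)$-sums of eigenvalues forces $Z=0$.
\item Condition (a) is equivalent to the positivity in (b) for all positive semidefinite $A$ of rank at least $j$. This class contains the projection test matrix.
\end{enumerate}
Only the valid direction (a)$\Rightarrow$(c) is used later in the paper, so nothing downstream is affected.
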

    \begin{proof}        
        First, we choose an orthonoromal coordinate system $(v_1,\ldots,v_n)$ consisting of eigenvectors of $Z$, so that $Z=\diag(\lambda_1,\ldots,\lambda_n)$.

        \ref*{mixed_discr_pos_iff:cond}$\Rightarrow$\ref*{mixed_discr_pos_iff:pos_mixed}: If $Z$ is strictly spectrally $(n-j)$-positive, then $\sum_{i\in I}\lambda_i > 0$ for every set $I\subseteq [n]$ of size $|I|=n-j$. Moreover, $\D(A_1^{\langle I\rangle},\ldots,A_j^{\langle I\rangle})> 0$ for all $A_1,\ldots,A_j\in\Sym^2_{++}(\R^n)$, and thus, identity~\cref{eq:mixed_discr_identity_positivity} yields $\D(A_1,\ldots,A_j,Z,I_n^{[n-j-1]})> 0$.

        \ref*{mixed_discr_pos_iff:pos_mixed}$\Rightarrow$\ref*{mixed_discr_pos_iff:pos}: Follows immediatley by choosing $A_1=\cdots=A_j=A$.

        \ref*{mixed_discr_pos_iff:pos}$\Rightarrow$\ref*{mixed_discr_pos_iff:cond}: Let $I\subseteq [n]$ be an arbitrary set of indices of size $|I|=n-j$ and take
        $A=\sum_{i\in [n]\setminus I} v_i v_i^{\ast} \in\Sym^2_{+}(\R^n)$ to be the orthogonal projection onto the $j$-dimensional subspace spanned by the family $\{v_i \colon i\in [n]\setminus I\}$. Observe that for any given set $J\subseteq [n]$ of size $|J|=n-j$, we have $\det(A^{\langle J\rangle})=1$ if $I=J$ and $\det(A^{\langle J\rangle})=0$ otherwise. Identity~\cref{eq:mixed_discr_identity_positivity} yields
        \begin{equation*}
            \sum_{i\in I}\lambda_i
            = \frac{j!(n-j-1)!}{n!} \D(A^{[j]},Z,I_n^{[n-j-1]}) > 0,
        \end{equation*}
        and thus, $Z$ is strictly spectrally $(n-j)$-positive.
    \end{proof}

    \subsection{Proof of \texorpdfstring{\cref{thmintro:mixed_discr_ineq}}{Theorem~C}}

    We are now ready to prove \cref{thmintro:mixed_discr_ineq}. From our discussion in \cref{sec:hyperbolicity}, we need to show that the quadratic form $\D({}\cdot{},{}\cdot{},\bm{C},Z,I_n^{[n-j-1]})$ is hyperbolic whenever the matrix~$Z$ is spectrally $(n-j)$-balanced.
    We will obtain the desired theorem by approximation from the following ``strict'' formulation.
    
    \begin{thm}\label{thm:mixed_discr_strictly}
        Let $2\leq j\leq n-1$ and $Z\in\Sym^2(\R^n)$ be strictly spectrally $(n-j)$-balanced. Then for all matrices $\bm{C}=(C_1,\ldots,C_{j-2}) \subseteq \Sym^2_{++}(\R^n)$, the quadratic form $\D({}\cdot{},{}\cdot{},\bm{C},Z,I_n^{[n-j-1]})$ is nondegenerate and hyperbolic on $\Sym^2(\R^n)$.
    \end{thm}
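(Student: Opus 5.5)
We argue by induction on $j\geq 2$, following Alexandrov's original scheme (as streamlined by Shenfeld and van Handel): a base case handled by a direct computation, and an induction step in which nondegeneracy and hyperbolicity are transported along a continuous deformation of the reference matrices and then reduced to dimension $n-1$ via \cref{lem:mixed_discr_properties}~\ref{lem:mixed_discr_properties:restr}.

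\textbf{Base case $j=2$.} Here $\bm C$ is empty, and after diagonalizing $Z=\diag(\lambda_1,\ldots,\lambda_n)$ (by \cref{lem:mixed_discr_properties}) the form is $Q(A,A)=\D(A,A,Z,I_n^{[n-3]})$. By \cref{lem:mixed_discr_identity_positivity} this equals a positive constant times $\sum_{|I|=n-2}\det(A^{\langle I\rangle})\sum_{i\in I}\lambda_i$, a weighted sum of $2\times 2$ principal minors. Write $\mu_{kl}=\sum_{i\notin\{k,l\}}\lambda_i = \tr Z-\lambda_k-\lambda_l$. Then
\begin{equation*}
Q(A,A)=c\sum_{k<l}\mu_{kl}\,(a_{kk}a_{ll}-a_{kl}^2).
\end{equation*}
Strict $(n-2)$-positivity, which follows from \cref{lem:spec_balanced_implies_positive}, gives $\mu_{kl}>0$. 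So the off-diagonal entries contribute a negative definite part, and the form splits as a negative definite piece plus the quadratic form $d\mapsto \sum_{k<l}\mu_{kl}d_kd_l$ on the diagonal. It remains to show that the symmetric matrix $M$ with zero diagonal and off-diagonal entries $\mu_{kl}$ is nondegenerate with exactly one positive eigenvalue. I would write $M=ww^\top\!-\!\text{(corrections)}$: with $s=\tr Z$ and $\mu_{kl}=s-\lambda_k-\lambda_l$, one has $M = sJ - \lambda\mathbf 1^\top - \mathbf 1\lambda^\top - \diag(s-2\lambda_k)$. The diagonal term is positive definite, because $s-2\lambda_k>0$ is exactly the strict $(n-2)$-balance condition ($n-1$ eigenvalues other than $\lambda_k$ summing to more than $\lambda_k$, via \cref{lem:spec_balanced_sufficient}). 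The remaining part has rank at most $2$, and by an interlacing/Sylvester argument at most one positive direction survives once we restrict to vectors orthogonal to a suitable vector. Nondegeneracy should follow from a determinant computation (matrix determinant lemma). Positivity somewhere is clear from $Q(I_n,I_n)>0$ by \cref{mixed_discr_pos_iff}.

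\textbf{Induction step, $j>2$.} First, the form is not negative semidefinite, since $Q(I_n,I_n)>0$ by \cref{mixed_discr_pos_iff}. Next, hyperbolicity plus nondegeneracy is equivalent to signature $(+,-,\ldots,-)$ with no zeros (\cref{lem:hyperbolic_iff_signature}), and this property is preserved along continuous paths of nondegenerate forms. We therefore deform $C_t=(1-t)C+tI_n$ for each $C\in\bm C$ (and also $Z$ toward a multiple of $I_n$ if desired, staying within the convex cone of strictly balanced matrices). At $t=1$ every reference matrix is $I_n$ or $Z$, and the case can be reduced to a known form, e.g.\ by repeating the base-case computation with $\D(A,A,Z,I^{[n-3]})$ or, more cleanly, by applying the same path argument to the conclusion for a smaller $j$. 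The crux is showing nondegeneracy for all positive definite $\bm C$.

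For nondegeneracy, suppose $Q(M,\cdot)=0$. By congruence invariance we normalize $C_1=I_n$ (replacing $Z$ by $C_1^{-1/2}ZC_1^{-1/2}$, which unfortunately does not preserve balance; instead I would keep $C_1$ and use $e_ie_i^*$ decompositions in an eigenbasis adapted to $C_1$). Testing against $e_ie_i^*$ and applying \cref{lem:mixed_discr_properties}~\ref{lem:mixed_discr_properties:restr} yields $\D(M^{\langle i\rangle},\ldots,Z^{\langle i\rangle},I_{n-1}^{[n-j-1]})$-type identities in dimension $n-1$ with $j-1$ in place of $j$. By \cref{lem:spec_balanced_iff}, $Z^{\langle i\rangle}$ is strictly $(n-j)=((n-1)-(j-1))$-balanced, so the induction hypothesis applies there. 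The plan is then the Alexandrov/Shenfeld--van Handel trick: combine the induction hypothesis (reverse Cauchy--Schwarz in dimension $n-1$, via \cref{lem:hyperbolic_iff}) with the fact that $\sum_i$ of these restricted forms reconstructs $Q$, to get $\sum_i Q_i(M^{\langle i\rangle})\ge 0$ with equality forcing each $M^{\langle i\rangle}$ to be proportional to $C^{\langle i\rangle}$ (\cref{lem:hyperbolic_equal_case}, using nondegeneracy inductively), and hence $M=0$.

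I expect this nondegeneracy step to be the main obstacle, since the identity-row trick requires an extra identity matrix, which is available only when $n-j-1\ge 1$ or via $C_1$. The case $j=n-1$ has $Z$ positive definite and is classical.
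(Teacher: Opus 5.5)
Your proposal is correct. The base case and the nondegeneracy step coincide with the paper's; the route differs only in which parameter the homotopy moves and what is used at its endpoint. A few details need tightening.

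\textbf{Base case.} Your ``suitable vector'' is $\mathbf 1$. On $\{\tr A=0\}$ the rank-two term $(\mathbf 1^\top d)(s\,\mathbf 1^\top d-2\lambda^\top d)$ vanishes identically, so $Q$ is negative definite on that hyperplane. Nondegeneracy then follows from \cref{lem:radical_subset_ker_xi} with no determinant computation; this is exactly \cref{prop:mixed_discr_base_case}.

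\textbf{Nondegeneracy in the induction step.} The paper also diagonalizes $C_1$ and writes $nQ(M,M)=\sum_i c_iQ_i(M^{\langle i\rangle})$. It uses $Q_i(M^{\langle i\rangle},C_1^{\langle i\rangle})=nQ(M,e_ie_i^*)=0$ together with $Q_i(C_1^{\langle i\rangle})>0$; you need to state the latter, and it follows from \cref{mixed_discr_pos_iff} applied to $Z^{\langle i\rangle}$. This gives $Q_i(M^{\langle i\rangle})\le 0$ for every $i$, not $\sum_i Q_i(M^{\langle i\rangle})\ge 0$ as you wrote. Hence every term vanishes, the proportionality constants are $0$, and $M^{\langle i\rangle}\in\rad Q_i=\{0\}$. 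Since $C_1$ is available for every $j\ge 3$, no extra identity factor is needed, and $j=n-1$ requires no separate classical input.

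\textbf{The homotopy.} The paper fixes $\bm C$ and moves $Z$ through the convex cone of strictly $(n-j)$-balanced matrices to $I_n$. There it imports hyperbolicity of $\D(\cdot,\cdot,\bm C,I_n^{[n-j]})$ from Alexandrov's classical inequality via \cref{lem:hyperbolic_on_cone}. It also uses the open--closed \cref{lem:homotopy_argument} in place of your constancy-of-signature argument.

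You instead fix $Z$ and move $\bm C$ to $I_n$, ending at $\D(\cdot,\cdot,Z,I_n^{[n-3]})$. This is valid, but you must explain why the base case applies to this $Z$: you need $\tr Z>2\lambda_{\max}(Z)$.
\begin{itemize}
\item By \cref{lem:spec_balanced_implies_positive}, at most $n-j-1$ eigenvalues are nonpositive.
\item $\tr Z-\lambda_{\max}(Z)$ is a sum of $n-1$ eigenvalues. The $n-j+1$ smallest of them exceed $\lambda_{\max}(Z)$ by balance.
\item The remaining $j-2$ are positive, because all nonpositive eigenvalues lie among those smallest ones.
\end{itemize}
Alternatively, deform $Z$ as well and end at $\D(A,A,I_n^{[n-2]})=\frac{(\tr A)^2-\tr(A^2)}{n(n-1)}$.

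\textbf{What each route buys.} Your version is self-contained: it never invokes Alexandrov's mixed discriminant inequality, and in effect reproves the needed case of it. The paper's version avoids comparing different balance levels by delegating the endpoint to the classical theorem.
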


    We will show \cref{thm:mixed_discr_strictly} by an induction on $j\geq 2$.    
    The base case, where $j=2$ and no reference matrices are present, the condition on $Z$ to be spectrally $(n-2)$-balanced is precisely what is needed for the quadratic form $\D({}\cdot{},{}\cdot{},Z,I_n^{[n-3]})$ to be negative semidefinite on the hyperplane of matrices with vanishing trace. This is discussed in the following proposition; its proof is based on an elementary yet clever way to express the mixed discriminant.

    \begin{prop}\label{prop:mixed_discr_base_case}
        Let $n\geq 3$ and $Z\in\Sym^2(\R^n)$ be strictly spectrally $(n-2)$-balanced. Then the quadratic form $\D({}\cdot{},{}\cdot{},Z,I_n^{[n-3]})$ is nondegenerate and hyperbolic on $\Sym^2(\R^n)$.
    \end{prop}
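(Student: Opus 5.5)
The plan is to diagonalize $Z$ and use \cref{lem:mixed_discr_identity_positivity} with $j=2$ to write the quadratic form explicitly in the entries of $A$. By \cref{lem:mixed_discr_properties}(ii) (conjugation by an orthogonal matrix), we may assume $Z=\diag(\lambda_1,\ldots,\lambda_n)$. Set $\Lambda=\lambda_1+\cdots+\lambda_n$. For $|I|=n-2$ with complement $\{k,\ell\}$, the matrix $A^{\langle I\rangle}$ is the $2\times 2$ principal submatrix on $\{k,\ell\}$, and $\sum_{i\in I}\lambda_i=\Lambda-\lambda_k-\lambda_\ell$. Writing $\mu_{k\ell}=\Lambda-\lambda_k-\lambda_\ell$ and $c=\frac{2(n-3)!}{n!}>0$, we get
\begin{equation*}
Q(A):=\D(A,A,Z,I_n^{[n-3]})=c\sum_{k<\ell}\mu_{k\ell}\bigl(a_{kk}a_{\ell\ell}-a_{k\ell}^2\bigr).
\end{equation*}
The key observation is that this form splits as a direct sum: one part depends only on the diagonal entries and the other only on the off-diagonal entries.

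The off-diagonal part $-c\sum_{k<\ell}\mu_{k\ell}a_{k\ell}^2$ is negative definite. Indeed, $\mu_{k\ell}>0$ for all $k\neq\ell$: strict $(n-2)$-balance implies strict $(n-2)$-positivity by \cref{lem:spec_balanced_implies_positive}, and $\mu_{k\ell}$ is a sum of $n-2$ eigenvalues. It therefore remains to show that the diagonal part $q(x)=c\sum_{k<\ell}\mu_{k\ell}x_kx_\ell$ on $\R^n$ is nondegenerate with signature $(+,-,\ldots,-)$.

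To this end, write $q(x)=\tfrac c2\bigl(x^{\top}Nx-x^{\top}Dx\bigr)$, where
\begin{equation*}
N=\Lambda\mathbf 1\mathbf 1^{\top}-\lambda\mathbf 1^{\top}-\mathbf 1\lambda^{\top},\qquad D=\diag(d_1,\ldots,d_n),\qquad d_k=\Lambda-2\lambda_k .
\end{equation*}
For $x$ with $\sum_k x_k=0$ we have
\begin{equation*}
x^{\top}Nx=\Lambda\Bigl(\sum_k x_k\Bigr)^2-2\Bigl(\sum_k x_k\Bigr)\pair{\lambda}{x}=0,
\end{equation*}
so on that hyperplane $q(x)=-\tfrac c2\sum_k d_kx_k^2$. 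Strict $(n-2)$-balance applied to the $(n-1)$-set $[n]\setminus\{k\}$ gives $\Lambda-\lambda_k>\lambda_{\max}\geq\lambda_k$, hence $d_k>0$. Thus $q$ is negative definite on $\{x:\sum_kx_k=0\}$. On the other hand, $q(\mathbf 1)=c\sum_{k<\ell}\mu_{k\ell}>0$. A form that is negative definite on a hyperplane and positive at a vector has signature exactly $(+,-,\ldots,-)$ with no zero eigenvalue, so $q$ is nondegenerate and hyperbolic.

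Combining the two parts, $Q$ is negative definite on the hyperplane $\{\tr A=0\}$ of $\Sym^2(\R^n)$: there the diagonal part is negative definite and the off-diagonal part is negative definite independently. Moreover $Q(I_n)>0$. Hence $Q$ has signature $(+,-,\ldots,-)$ with no zero eigenvalues, i.e.\ it is nondegenerate and hyperbolic by \cref{lem:hyperbolic_iff_signature}. The only real obstacle is spotting the rank-structure decomposition $N-D$ and verifying $d_k>0$; the latter is exactly where balance, rather than mere positivity, is used.
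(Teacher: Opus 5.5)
Your proof is correct and is essentially the paper's argument. The paper also diagonalizes $Z$, applies \cref{lem:mixed_discr_identity_positivity}, and notes that the $a_{jj}a_{kk}$ terms factor through $\tr(A)$, so that on $\{\tr A=0\}$ the form becomes $-\sum_{j,k}a_{jk}^2(\tr Z-\lambda_j-\lambda_k)$, which is negative definite since $\tr Z>2\lambda_{\max}$; your $N-D$ splitting is a repackaging of this, and the only difference is that you finish via the signature (\cref{lem:hyperbolic_iff_signature}) rather than via \cref{lem:hyperbolic_iff,lem:radical_subset_ker_xi}.
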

    \begin{proof}
        First, choose a coordinate system consisting of eigenvectors of $Z$, so that we have $Z=\diag(\lambda_1,\ldots,\lambda_n)$, where $\lambda_1,\ldots,\lambda_n\in\R$.
        Then, take a matrix $A\in\Sym^2(\R^n)$, and note that by identity \cref{eq:mixed_discr_identity_positivity}, we have
        \begin{equation*}
            \D(A,A,Z,I_n^{[n-3]})
            = \frac{1}{n(n-1)(n-2)} \sum_{j,k \in [n]} (a_{jj}a_{kk}-a_{jk}^2)(\tr(Z) - \lambda_j - \lambda_k),
        \end{equation*}
        where we also used the fact that $a_{jj}a_{kk}-a_{jk}^2 =0$ whenever $j=k$. Observe that for the part containing $a_{jj}a_{kk}$, we may exploit the symmetry of the expression so that
        \begin{equation*}
            \sum_{j,k \in [n]} a_{jj}a_{kk}(\tr(Z) - \lambda_j - \lambda_k)
            = \sum_{j,k \in [n]} a_{jj}a_{kk}(\tr(Z) - 2\lambda_j)
            = \tr(A) \sum_{j\in [n]} a_{jj}(\tr(Z) - 2\lambda_j).
        \end{equation*}
        Hence, whenever $\tr(A)=0$, we have
        \begin{equation*}
            \D(A,A,Z,I_n^{[n-3]})
            =  - \frac{1}{n(n-1)(n-2)} \sum_{j,k \in [n]} a_{jk}^2 (\tr(Z) - \lambda_j - \lambda_k).
        \end{equation*}
        The assumption that $Z$ is strictly $(n-2)$-balanced implies that $\tr(Z)> 2 \lambda_{\max}(Z)$. In particular, $\tr(Z) > \lambda_j + \lambda_k$ for all $j,k\in [n]$. 
        This shows that the quadratic form $\D({}\cdot{},{}\cdot{},Z,I_n^{[n-3]})$ is negative definite on $\ker\xi$, where $\xi=\tr:\Sym^2(\R^n)\to\R$. 
        Therefore, by \cref{lem:hyperbolic_iff,lem:radical_subset_ker_xi} the quadratic form $Q = \D({}\cdot{},{}\cdot{},Z,I_n^{[n-3]})$ is hyperbolic on the vector space $\Sym^2(\R^n)$ and $\rad Q=\{0\}$.
    \end{proof}

    Next we carry out the induction step for~\cref{thm:mixed_discr_strictly}; the argument is inspired by Alexandrov's proof of the Alexandrov--Fenchel inequality~\cite{Alexandrov1938}. For every strictly $(n-j)$-balanced matrix $Z$, we show that the quadratic form $\D({}\cdot{},{}\cdot{},\bm{C},Z,I_n^{[n-j-1]})$ is nondegenerate, using the induction hypothesis. Then, by ``connecting'' $Z$ to the identity matrix $I_n$, we show that the hyperbolicity is preserved, employing the following homotopy argument which is tailored to our purposes.

    \begin{lem}\label{lem:homotopy_argument}
        Let $\mathcal{S}$ be a connected set of nondegenerate quadratic forms on a finite-dimensional real vector space $X$ and suppose there exists a vector $y\in X$ such that $Q(y,y)>0$ for all $Q\in\mathcal{S}$. Then all or none of the quadratic forms in $\mathcal{S}$ are hyperbolic.
    \end{lem}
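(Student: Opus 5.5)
The plan is to use the finite-dimensional characterization of \cref{lem:hyperbolic_iff_signature} together with the fact that the signature of a nondegenerate quadratic form cannot change along a continuous path. Fix $\dim X=m$ and a basis of $X$, and identify each quadratic form with its symmetric Gram matrix in $\Sym^2(\R^m)$. Nondegeneracy of $Q$ means exactly that this matrix is invertible, so $\mathcal{S}$ becomes a connected subset of the open set of invertible symmetric matrices. The topology on quadratic forms is the one induced by this identification; any choice of basis gives the same topology.

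The first step is to show that the number of negative eigenvalues is locally constant on invertible symmetric matrices. Eigenvalues depend continuously on the matrix (for instance, the ordered eigenvalues are $1$-Lipschitz by Weyl's inequality), and none of them can cross $0$ along invertible matrices. Hence the signature $(p,q)$ with $p+q=m$ is locally constant on $\mathcal{S}$. A locally constant function on a connected set is constant, so all $Q\in\mathcal{S}$ share the same signature.

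The second step is to conclude. By \cref{lem:hyperbolic_iff_signature}, $Q$ is hyperbolic if and only if its signature is $(+,-,\ldots,-)$, i.e.\ $p=1$. We have already shown that $p$ is constant on $\mathcal{S}$, so either every form in $\mathcal{S}$ has $p=1$ or none does, which is the claim.

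The hypothesis that $Q(y,y)>0$ for all $Q\in\mathcal{S}$ is not actually needed here, since $p\geq1$ for every form then holds automatically. It guarantees that no form is negative definite, but constancy of the signature already handles that case. I will remark on this rather than use it.

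The only delicate point is the local constancy of the signature. If one prefers to avoid eigenvalue continuity, there is an alternative. If $Q_0$ is positive definite on a $p$-dimensional subspace $P$ and negative definite on a complementary $q$-dimensional subspace $N$, then by compactness of the unit spheres in $P$ and $N$ these definiteness properties persist for all $Q$ near $Q_0$. This gives $p(Q)\geq p$ and $q(Q)\geq q$. Since nondegeneracy gives $p(Q)+q(Q)=m$ for $Q\in\mathcal{S}$, equality follows.
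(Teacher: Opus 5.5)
Your proof is correct, but it is organized differently from the paper's.

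The paper runs an open--closed argument directly on the set $\mathcal{S}_0$ of hyperbolic forms in $\mathcal{S}$.
\begin{itemize}
\item Closedness comes from writing $\mathcal{S}_0=\bigcap_{x\in X}\{Q\in\mathcal{S}: Q(x,y)^2\geq Q(x,x)Q(y,y)\}$. This is exactly where the vector $y$ with $Q(y,y)>0$ enters, via \cref{lem:hyperbolic_iff} and \cref{lem:hyperbolic_on_cone}.
\item Openness comes from the persistence of negative definiteness on a fixed hyperplane $\ker\xi$ under small perturbations, by compactness of its unit sphere.
\end{itemize}

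You instead show that the full inertia $(p,q)$ is locally constant on nondegenerate forms, and then invoke \cref{lem:hyperbolic_iff_signature}. This makes both $\{p=1\}$ and its complement open at once. Your compactness variant is essentially the paper's openness argument, applied to both the positive and the negative part. That symmetric use, together with $p+q=\dim X$, is what makes the closedness step, and hence $y$, unnecessary. Your remark that the hypothesis on $y$ is superfluous is right.

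What the paper's route buys is that it stays inside the reverse Cauchy--Schwarz framework of \cref{sec:hyperbolicity}. Its closedness step needs neither nondegeneracy nor finite dimension. Yours is shorter and more standard.

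One small point: you only need \cref{lem:hyperbolic_iff_signature} for nondegenerate forms. In that case it reduces to a dimension count. A nondegenerate form of inertia $(p,q)$ is positive definite on some $p$-dimensional subspace, so it has no negative semidefinite subspace of dimension larger than $q$. Hence hyperbolicity forces $p\leq 1$, and failure of semidefiniteness on $X$ forces $p\geq 1$. It is worth stating this, since the proof of that lemma as written tacitly takes $Q$ to be negative definite on $\ker\xi$.
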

    \begin{proof}
        First, note that the space $X^{\ast}\otimes X^{\ast}$ of bilinear forms on $X$ naturally carries the topology of uniform convergence on bounded subsets of $X$.
        Let $\mathcal{S}_0 \subseteq \mathcal{S}$ denote the set of hyperbolic quadratic forms in $\mathcal{S}$. Expressing this set as 
        \begin{equation*}
           \mathcal{S}_0
           = \bigcap_{x\in X} \{Q\in\mathcal{S} : Q(x,y)^2 \geq Q(x,x)Q(y,y)\}
        \end{equation*}
        shows that $\mathcal{S}_0$ is closed in $\mathcal{S}$. Next, we endow the vector space $X$ with some norm $\lVert {}\cdot{} \rVert$ and take some hyperbolic quadratic from $Q_0\in\mathcal{S}_0$. By its nondegenerate, there exists some linear form $\xi:X\to\R$ such that $Q_0$ is negative definite on $\ker\xi$. Observe that the map
        \begin{equation*}
            f:\mathcal{S}\to\R:f(Q) = \sup \{ Q(x,x) : x\in \ker\xi \text{ and } \lVert x\rVert =1 \}
        \end{equation*}
        is continuous on $\mathcal{S}$ and $f(Q_0)<0$ by compactness. Hence, $f<0$ in some neighborhood of $Q_0$, which implies that $\mathcal{S}_0$ is open in $\mathcal{S}$.
        Since $\mathcal{S}$ is connected and $\mathcal{S}_0\subseteq\mathcal{S}$ is both open and closed in $\mathcal{S}$ we must have $\mathcal{S}_0=\emptyset$ or $\mathcal{S}_0=\mathcal{S}$.
    \end{proof}

    \begin{proof}[Proof of \cref{thm:mixed_discr_strictly}]
        We prove the statement by induction on $j \geq 2$. The base case of $j=2$ is covered by \cref{prop:mixed_discr_base_case}.
        For the induction step, let $j>2$, assume that $Z$ is strictly $(n-j)$-balanced and take $C_1,\ldots,C_{j-2} \in \Sym^2_{++}(\R^n)$. Moreover we write $C=C_1$ and $\bm{C'}=(C_2,\ldots,C_{j-2})$ and choose a coordinate system of $\R^n$ such that $C=\diag(c_1,\ldots,c_n)$.

        According to \cref{lem:spec_balanced_iff}, for each $i\in [n]$, the matrix $Z^{\langle i\rangle} \in\Sym^2(\R^{n-1})$ is strictly spectrally $(n-j)$-balanced. Since $n-j=(n-1)-(j-1)$, the induction hypothesis gives that the quadratic form $\D({}\cdot{},{}\cdot{},{\bm{C'}}^{\langle i\rangle},Z^{\langle i\rangle},I_{n-1}^{[n-j-1]})$ is nondegenerate and hyperbolic on $\Sym^2(\R^{n-1})$. Moreover, it is strictly positive on positive definite matrices by \cref{lem:spec_balanced_implies_positive}.
    	Consequently, for any matrix $A\in \Sym^2(\R^n)$,
        \begin{align*}
            \D(A,A,C,\bm{C'},Z,I_n^{[n-k-1]})
            &= \frac{1}{n} \sum_{i=1}^n c_i \D(A^{\langle i\rangle},A^{\langle i\rangle},{\bm{C'}}^{\langle i\rangle},Z^{\langle i\rangle},I_{n-1}^{[n-j-1]}) \\
            &\leq \frac{1}{n} \sum_{i=1}^n c_i \frac{\D(A^{\langle i\rangle},C^{\langle i\rangle},{\bm{C'}}^{\langle i\rangle},Z^{\langle i\rangle},I_{n-1}^{[n-j-1]})^2}{\D(C^{\langle i\rangle},C^{\langle i\rangle},{\bm{C'}}^{\langle i\rangle},Z^{\langle i\rangle},I_{n-1}^{[n-j-1]})}.
        \end{align*}
        
        Suppose now that $A\in\Sym^2(\R^n)$ lies in the radical of $\D({}\cdot{},{}\cdot{},\bm{C},Z,I_n^{[n-j-1]})$. Then the left hand side of the inequality above vanishes and
        \begin{equation*}
            \D(A^{\langle i\rangle},C^{\langle i\rangle},{\bm{C'}}^{\langle i\rangle},Z^{\langle i\rangle},I_{n-1}^{[n-j-1]})
            = n \D(A,e_ie_i^{\ast},C,\bm{C'},Z,I_n^{[n-j-1]})
            = 0,
        \end{equation*}
        so the right hand side vanishes as well. Consequently, for all $i\in [n]$, we have
        \begin{equation*}
            \D(A^{\langle i\rangle},A^{\langle i\rangle},{\bm{C'}}^{\langle i\rangle},Z^{\langle i\rangle},I_{n-1}^{[n-k-1]})
            = \frac{\D(A^{\langle i\rangle},C^{\langle i\rangle},{\bm{C'}}^{\langle i\rangle},Z^{\langle i\rangle},I_{n-1}^{[n-k-1]})^2}{\D(C^{\langle i\rangle},C^{\langle i\rangle},{\bm{C'}}^{\langle i\rangle},Z^{\langle i\rangle},I_{n-1}^{[n-k-1]})}
            = 0.
        \end{equation*}
        \cref{lem:hyperbolic_equal_case} implies that $A^{\langle i\rangle}$ lies in the radical of 
        $\D({}\cdot{},{}\cdot{},{\bm{C'}}^{\langle i\rangle},Z^{\langle i\rangle},I_{n-1}^{[n-j-1]})$ for each $i\in[n]$. By the nondegeneracy part of the induction hypothesis, $A^{\langle i\rangle}=0$ for $i\in[n]$, and thus, $A=0$. This shows that the quadratic form $\D({}\cdot{},{}\cdot{},\bm{C},Z,I_n^{[n-j-1]})$ is nondegenerate.

        For the final part, fix the reference bodies $\bm{C}=(C_1,\ldots,C_{j-2}) \subseteq \K^n$ and denote by $\mathcal{S}$ the convex cone of quadratic forms $Q_Z=\D({}\cdot{},{}\cdot{},\bm{C},Z,I_n^{[n-j-1]})$ on $\Sym^2(\R^n)$, where $Z$ is strictly spectrally $(n-j)$-balanced. By our previous argument, each quadratic form $Q\in\mathcal{S}$ is nondegenerate and by \cref{lem:spec_balanced_implies_positive}, we have $Q(I_n,I_n)>0$ for all $Q\in\mathcal{S}$. Moreover, for the choice $Z=I_n$, Alexandrov's mixed discriminant inequality \cref{eq:Alexandrov_matrix_ineq} and \cref{lem:hyperbolic_on_cone} assert that $Q_{I_n}$ is hyperbolic. Hence, \cref{lem:homotopy_argument} yields that each quadratic form $Q\in \mathcal{S}$ is hyperbolic. This completes the induction step.
    \end{proof}

    An approximation argument yields that if $Z$ is spectrally $(n-j)$-balanced and the reference matrices are positive semidefinite, then the quadratic form $\D({}\cdot{},{}\cdot{},\bm{C},Z,I_n^{[n-j-1]})$ is possibly degenerate but still hyperbolic. In particular, we obtain~\cref{thmintro:mixed_discr_ineq}, which we state here in a slightly stronger form.

    \begin{thm}\label{thm:Alexandrov_type_ineq}
        Let $2\leq j\leq n-1$ and $Z\in\Sym^2(\R^n)$ be spectrally $(n-j)$-balanced. Then for all $A\in\Sym^2(\R^n)$, $B\in\Sym^2_{+}(\R^{n})$ and $\bm{C}=(C_1,\ldots,C_{j-2}) \subseteq \Sym^2_{+}(\R^{n})$,
        \begin{align}\label{eq:Alexandrov_type_ineq}
        \D(A,B,\bm{C},Z,I_n^{[n-j-1]})^2 
         \geq \D(A,A,\bm{C},Z,I_n^{[n-j-1]}) \D(B,B,\bm{C},Z,I_n^{[n-j-1]}).
        \end{align}
    \end{thm}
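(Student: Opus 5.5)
The plan is to deduce the statement from \cref{thm:mixed_discr_strictly} by approximation, combined with \cref{lem:hyperbolic_iff}. Given $Z$ spectrally $(n-j)$-balanced and $\bm C\subseteq\Sym^2_+(\R^n)$, I set $Z_\varepsilon = Z+\varepsilon I_n$ and $C_{i,\varepsilon}=C_i+\varepsilon I_n$ for $\varepsilon>0$.

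\textbf{Step 1: the perturbation is strictly balanced.} Adding $\varepsilon I_n$ shifts every eigenvalue by $\varepsilon$. For any $I$ with $|I|=n-j+1$, the left side of the defining inequality gains $(n-j+1)\varepsilon$ while the right side gains only $\varepsilon$. Since $n-j+1\geq 2$, this turns the nonstrict inequality $\lambda_1+\cdots+\lambda_{n-j+1}\geq\lambda_n$ into a strict one for every such $I$. Hence $Z_\varepsilon$ is strictly spectrally $(n-j)$-balanced. Also $C_{i,\varepsilon}\in\Sym^2_{++}(\R^n)$.

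\textbf{Step 2: the reverse Cauchy--Schwarz inequality for the perturbed form.} By \cref{thm:mixed_discr_strictly}, the form $Q_\varepsilon=\D({}\cdot{},{}\cdot{},\bm C_\varepsilon,Z_\varepsilon,I_n^{[n-j-1]})$ is hyperbolic, so it is negative semidefinite on some hyperplane $\ker\xi_\varepsilon$. Now take $A\in\Sym^2(\R^n)$ and $B\in\Sym^2_+(\R^n)$, and put $B_\varepsilon=B+\varepsilon I_n\in\Sym^2_{++}(\R^n)$. By \cref{lem:spec_balanced_implies_positive}, $Z_\varepsilon$ is strictly $(n-j)$-positive. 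Then \cref{mixed_discr_pos_iff} gives $Q_\varepsilon(B_\varepsilon,B_\varepsilon)>0$. \cref{lem:hyperbolic_iff}\ref{lem:hyperbolic_iff:hyperbolic}$\Rightarrow$\ref{lem:hyperbolic_iff:reverse_C-S} therefore yields
\[
Q_\varepsilon(A,B_\varepsilon)^2\geq Q_\varepsilon(A,A)\,Q_\varepsilon(B_\varepsilon,B_\varepsilon).
\]

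\textbf{Step 3: pass to the limit.} Let $\varepsilon\to0$. Every term is a polynomial in $\varepsilon$ by multilinearity of $\D$, so both sides converge and the inequality \cref{eq:Alexandrov_type_ineq} follows.

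The only subtle point is that we cannot apply \cref{lem:hyperbolic_iff} directly in the limit, because $Q_0(B,B)$ may vanish. That is exactly why positivity is ensured at level $\varepsilon$ and the inequality itself, not hyperbolicity, is passed to the limit. The resulting statement still holds even when $Q_0(B,B)=0$.

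One could alternatively note that, whenever $Q_0(B,B)>0$, the limit form $Q_0$ is itself hyperbolic: it is a limit of hyperbolic forms, and hyperbolicity is a closed condition, as in the proof of \cref{lem:homotopy_argument}. This is not needed here, though.

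I expect no real obstacle. The main things to check carefully are that the strict balance is achieved in Step 1 (this uses $j\leq n-1$, so $n-j+1\geq2$) and the strict positivity of $Q_\varepsilon(B_\varepsilon,B_\varepsilon)$.
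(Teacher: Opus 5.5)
Your proposal is correct and follows the paper's proof: perturb $Z$ and $\bm C$ by $\varepsilon I_n$, use \cref{thm:mixed_discr_strictly} to get hyperbolicity of the perturbed form, apply the reverse Cauchy--Schwarz inequality, and let $\varepsilon\to 0$. The only cosmetic difference is the handling of $B$: the paper keeps $B$ fixed and uses that the perturbed form is nonnegative on positive semidefinite matrices, so the case $Q_\varepsilon(B,B)=0$ is trivial. You instead replace $B$ by $B+\varepsilon I_n$ to force strict positivity.
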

    \begin{proof}
        For every $\varepsilon>0$, the matrix
        $Z_{\varepsilon} = Z + \varepsilon I_n$ is strictly spectrally $(n-j)$-balanced and        
        $\bm{C}_{\varepsilon}=(C_1 + \varepsilon I_n,\ldots,C_{j-2}+\varepsilon I_n) \subseteq \Sym^2_{++}(\R^n)$. By \cref{thm:mixed_discr_strictly} and \cref{lem:spec_balanced_implies_positive} the quadratic form $\D({}\cdot{},{}\cdot{},\bm{C}_{\varepsilon},Z_{\varepsilon},I_n^{[n-j-1]})$ is hyperbolic on $\Sym^2(\R^n)$ and nonnegative on positive semidefinite matrices. Hence,       
        \begin{align*}
        \D(A,B,\bm{C}_{\varepsilon},Z_{\varepsilon},I_n^{[n-j-1]})^2 
         \geq \D(A,A,\bm{C}_{\varepsilon},Z_{\varepsilon},I_n^{[n-j-1]}) \D(B,B,\bm{C}_{\varepsilon},Z_{\varepsilon},I_n^{[n-j-1]}),
        \end{align*}
        and by passing to the limit $\varepsilon \to 0$, we obtain the desired inequality.
    \end{proof}

    \subsection{Consequences}

    To conclude this section, we note three further mixed discriminant inequalities involving a spectrally $(n-j)$-balanced matrix $Z\in\Sym^2(\R^n)$ arising from the Alexandrov type inequality~\cref{eq:Alexandrov_type_ineq}. The proofs follow standard algebraic manipulations of log-concave sequences. We omit them here, but carry out the completely analogous arguments later in the proofs of \cref{thm:Minkowsi-ineq_zeta,thm:B-M-ineq_zeta:additive,thm:B-M-ineq_zeta}. In the following, we assume $A,B\in\Sym^2_+(\R^n)$, $0 < i< j$, and $\lambda\in (0,1)$.
    
    \begin{equation}
        \D(A^{[i]},B^{[j-i]},Z,I_n^{[n-j-1]})^{j}
        \geq \D(A^{[j]},Z,I_n^{[n-j-1]})^{i}\, \D(B^{[j]},Z,I_n^{[n-j-1]})^{j-i}
    \end{equation}
    \begin{align}
        \begin{split}
            &\D(((1-\lambda)A+\lambda B)^{[j]},Z,I_n^{[n-j-1]})^{\frac{1}{j}} \\
            &\qquad \geq (1-\lambda) \D(A^{[j]},Z,I_n^{[n-j-1]})^{\frac{1}{j}} + \lambda \D(B^{[j]},Z,I_n^{[n-j-1]})^{\frac{1}{j}}.
        \end{split}            
    \end{align}
    \begin{equation}
        \D(((1-\lambda)A+\lambda B)^{[j]},Z,I_n^{[n-j-1]}) 
        \geq \D(A^{[j]},Z,I_n^{[n-j-1]})^{1-\lambda} \, \D(B^{[j]},Z,I_n^{[n-j-1]})^{\lambda}.
    \end{equation}

    \section{Mixed volume inequalities}
    \label{sec:mixed_vols}

    This section deals with several mixed volume inequalities with some function $\zeta$ taking the role of (the support function of) a convex body; in particular, we prove \cref{thmintro:Alexandrov-Fenchel,thmintro:Brunn-Minkowski}. Before that, we recall some important facts about mixed volumes; as a general reference, we recommend~\cite{Schneider2014} and \cite{Shenfeld2019}.

    \subsection{Preparations}
    \label{sec:mixed_vols:preps}

    We recall that for a function $f\in C^2(\S^{n-1})$ and $u\in\S^{n-1}$, we denote by $D^2f(u)$ the Hessian of its one-homogeneous extension to $\R^n\setminus \{0\}$ at the point $u$. By homogeneity, this matrix satisfies $D^2f(u) u = 0$, and may thus be interpreted as a self-adjoint linear endomorphism of $u^{\perp}$, that is, $D^2f(u)\in\Sym^2(u^{\perp})$.
    
    Moreover, $f\in C^2(\S^{n-1})$ is a support function of some convex body if and only if $D^2f(u)$ is positive semidefinite at every point $u\in\S^{n-1}$. If $K\in\K^n$ is a convex body of class $C^2_+$ (that is, its boundary is a $C^2$ submanifold with positive Gauss curvature), then $D^2h_K(u)$ is positive definite on $\S^{n-1}$. For instance, for $K=\BB$, we have $D^2h_{\BB}(u) = I_{u^\perp}$, where here and in the following $I_{u^\perp}$ denotes the identity map on $u^\perp \subseteq \R^n$. 

    Given a function $f\in C^2(\S^{n-1})$, by compactness of the unit sphere, there exists $r\geq 0$ such that $D^2f(u) + r I_{u^\perp}$ is positive definite on $\S^{n-1}$. Hence, every $C^2(\S^{n-1})$ function can be written as the difference of two support functions, allowing us to extend the mixed volume to a multilinear form on the space $C^2(\S^{n-1})$.
    
    The following classical formula expresses the mixed volume of $n$~convex bodies in $\R^n$ as an integral involving a mixed discriminant:
    \begin{equation}\label{eq:mixed_volume_integral}
        \V(K_1,\ldots,K_n)
        = \frac{1}{n} \int_{\S^{n-1}} h_{K_1} \, 
        \D(D^2h_{K_2},\ldots,D^2h_{K_n})\, d\omega,
    \end{equation}
    where here and in the following, $\omega$ denotes spherical Lebesgue measure.
    This identity also naturally extends to $C^2(\S^{n-1})$ functions. 
    The classical Alexandrov--Fenchel inequality~\cref{eq:Alexandrov-Fenchel_ineq} can thus be interpreted as the quadratic form $\V({}\cdot{},{}\cdot{},M_1,\ldots,M_{n-2})$ being hyperbolic on $C^{2}(\S^{n-1})$ (see also \cref{exl:A-F_ineq_hyperbolic}). Since we may express this quadratic form in terms of a differential operator on $\S^{n-1}$ via~\cref{eq:mixed_volume_integral}, the hyperbolicity is equivalent to a spectral gap.     
    Shenfeld and van Handel~\cite{Shenfeld2019} gave a modern and concise proof of the Alexandrov--Fenchel inequality based on such a spectral gap argument,
    and in here, we follow their approach to prove~\cref{thmintro:Alexandrov-Fenchel}.

    \subsection{Proof of \texorpdfstring{\cref{thmintro:Alexandrov-Fenchel}}{Theorem~B}}

    To this end, let $\zeta\in C^2(\S^{n-1})$ be such that for every $u\in\S^{n-1}$, the matrix $D^2\zeta(u)$ is positive definite or strictly spectrally $(n-j)$-balanced and let $M_1,\ldots,M_{j-2}\in\K^n$ be of class $C^{\infty}_+$; moreover, assume that $M_1$ contains the origin in its interior and write $h=h_{M_1}$. Define the a differential operator $\mathscr{A}$ and Borel measure $\mu$ on $\S^{n-1}$ as
    \begin{align}\label{eq:differential_operator}
        \begin{split}
        \mathscr{A}f
        &= h \, \frac{\D(D^2f,D^2h_{M_1},\ldots,D^2h_{M_{j-2}},D^2\zeta,I_{u^{\perp}}^{[n-j-1]}) }{\D(D^2h,D^2h_{M_1},\ldots,D^2h_{M_{j-2}},D^2\zeta,I_{u^{\perp}}^{[n-j-1]})}, \\
        d\mu
        &= \frac{1}{n} \frac{1}{h} \, \D(D^2h,D^2h_{M_1},\ldots,D^2h_{M_{j-2}},D^2\zeta,I_{u^{\perp}}^{[n-j-1]})\, d\omega.
        \end{split}
    \end{align}

    This differential operator $\mathscr{A}$ is very convenient for our purposes; it has several desirable properties that we summarize in the proposition below.
    
    \begin{prop}\label{prop:differential_operator}
        $\mathscr{A}$ is a uniformly elliptic operator that extends to a densely defined self-adjoint operator on $L^2(\mu)$ with discrete spectrum and $C^\infty(\S^{n-1})$ eigenfunctions. Its largest eigenvalue is~$1$, and the corresponding eigenspace is spanned by~$h$. Moreover, for all $f,g\in C^2(\S^{n-1})$,
        \begin{equation}\label{eq:mixed_vol_diff_op}
        \V(f,g,M_1,\ldots,M_{j-2},\zeta,{\BB}^{[n-j-1]})
        = \pair{f}{\mathscr{A}g}_{L^2(\mu)}
        = \pair{g}{\mathscr{A}f}_{L^2(\mu)}.
    \end{equation}
    \end{prop}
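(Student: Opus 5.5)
The plan is to follow Shenfeld--van Handel's treatment of the classical operator (their Lemma on the operator $\mathscr{A}$ with reference bodies of class $C^\infty_+$) and to isolate what changes when $D^2\zeta$ is only balanced rather than positive definite. Write $\mathbf D(X):=\D(X,D^2h_{M_1},\ldots,D^2h_{M_{j-2}},D^2\zeta,I_{u^\perp}^{[n-j-1]})$, viewed pointwise as a linear form in $X\in\Sym^2(u^\perp)$, so that $\mathscr{A}f=h\,\mathbf D(D^2f)/\mathbf D(D^2h)$. The identity \cref{eq:mixed_vol_diff_op} is then immediate from \cref{eq:mixed_volume_integral}, extended multilinearly to $C^2(\S^{n-1})$. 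Indeed,
\[
\V(f,g,\bm M,\zeta,\BB^{[n-j-1]})=\tfrac1n\int f\,\mathbf D(D^2g)\,d\omega=\int f\,\mathscr{A}g\,d\mu ,
\]
and the symmetry of the mixed volume gives the second equality; in particular $\mathscr{A}$ is symmetric on $C^2$ with respect to $L^2(\mu)$.

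Next I check that $\mu$ is a finite positive measure with smooth positive density and that $\mathscr{A}$ is uniformly elliptic. At each $u$, $D^2\zeta(u)$ is positive definite or strictly spectrally $(n-1)-(j-1)$-balanced in $u^\perp$. In either case it is strictly spectrally $(n-j)$-positive, by \cref{lem:spec_balanced_implies_positive}. Then \cref{mixed_discr_pos_iff}, applied in dimension $n-1$ with $j-1$ positive definite slots, gives $\mathbf D(X)>0$ for all $X\in\Sym^2_{++}(u^\perp)$. Since the symbol of $\mathscr{A}$ is $\xi\mapsto h\,\mathbf D(\xi\xi^{\ast})/\mathbf D(D^2h)$, I need $\mathbf D(\xi\xi^\ast)\ge c|\xi|^2$. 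Using \cref{lem:mixed_discr_properties}\ref{lem:mixed_discr_properties:restr}, $\mathbf D(\xi\xi^\ast)$ is a mixed discriminant in $\xi^\perp\cap u^\perp$ with the restricted matrices. Restricting $D^2\zeta$ to a codimension-one subspace preserves strict balance (\cref{lem:spec_balanced_iff}) when $j-1>2$, and strict positivity (\cref{lem:spec_positive_iff}) in general. So \cref{mixed_discr_pos_iff} again gives strict positivity, which by continuity and compactness of the unit tangent bundle is bounded below uniformly. Here $h>0$ because $0\in\operatorname{int}M_1$. This is the step I expect to require care: the dimension bookkeeping must show that positivity of the symbol needs only spectral $(n-j)$-positivity, which survives restriction.

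With uniform ellipticity, smooth coefficients (up to the regularity of $\zeta$, which I would first reduce to smooth $\zeta$ by approximation if needed) and symmetry, I then invoke standard elliptic theory exactly as in Shenfeld--van Handel. Writing $\mathscr{A}=h\cdot(\text{second-order operator})$, the operator is essentially self-adjoint on $L^2(\mu)$ and bounded above. The form $-\langle f,\mathscr{A}f\rangle$ plus a constant dominates the $H^1$ norm, and compactness of $H^1\hookrightarrow L^2$ then gives discrete spectrum, with eigenfunctions smooth by elliptic regularity. Finally, $\mathscr{A}h=h$ by construction, so $1$ is an eigenvalue with eigenfunction $h>0$. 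Since $\mathscr{A}$ is elliptic with positive zeroth-order structure, a Krein--Rutman / strong maximum principle argument for $\mathscr{A}-1$ shows that the eigenvalue with positive eigenfunction is the top one and is simple. Concretely, $e^{t(\mathscr{A}-1)}$ is positivity improving and preserves $h$, so $1$ is the largest eigenvalue with a one-dimensional eigenspace.
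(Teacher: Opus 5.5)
Your proposal is correct and follows essentially the same route as the paper: the identity comes from the mixed-volume integral formula, uniform ellipticity comes from strict spectral $(n-j)$-positivity of $D^2\zeta(u)$ via \cref{lem:spec_balanced_implies_positive,lem:spec_positive_iff,mixed_discr_pos_iff}, and standard elliptic theory plus the orthogonality/Perron--Frobenius argument with $\mathscr{A}h=h$, $h>0$, identifies $1$ as the simple top eigenvalue. Two of your steps are more explicit than the paper: you reduce the symbol $\mathbf D(\xi\xi^\ast)$ to a mixed discriminant on $\xi^\perp\cap u^\perp$ via part (iv) of \cref{lem:mixed_discr_properties} (which uses $j>2$), giving the rank-one positivity that the paper's appeal to \cref{lem:spec_positive_iff} leaves implicit, and you note that $\zeta$ should first be smoothed (preserving the open pointwise condition) to justify $C^\infty$ eigenfunctions.
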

    \begin{proof}
        \cref{lem:spec_balanced_implies_positive,lem:spec_positive_iff} imply that
        $\D(A,D^2h_{M_1},\ldots,D^2h_{M_{j-2}},D^2\zeta,I_{u^{\perp}}^{[n-j-1]})>0$ for every $u\in\S^{n-1}$ and matrix $A\in \Sym^2_{++}(u^\perp)$. Hence, by compactness of the unit sphere, the differential operator $\mathscr{A}$ is uniformly elliptic.
        The identity \cref{eq:mixed_vol_diff_op} is due to \cref{eq:mixed_volume_integral}; in particular, the operator $\mathscr{A}$ defines a symmetric quadratic form.

        Standard elliptic theory (see, e.g., \cite{Gilbarg2001}*{Section~8.12}) asserts that $\mathscr{A}$ 
        extends to a densely defined self-adjoint operator on $L^2(\mu)$ with discrete spectrum and $C^\infty(\S^{n-1})$ eigenfunctions, and its largest eigenvalue is simple with a positive eigenfunction. We have normalized $\mathscr{A}$ so that $\mathscr{A}h=h$, and thus, by orthogonality of the eigenfunctions, $h$ must correspond to the largest eigenvalue, which is necessarily $1$.
    \end{proof}

    The following lemma (taken from \cite{Shenfeld2019}*{Lemma~2.9}) describes the relation between the hyperbolicity of the quadratic form $Q(f,g)=\langle f,\mathscr{A} g\rangle$ and the spectral properties of $\mathscr{A}$. It can be regarded as a spectral version of \cref{lem:hyperbolic_iff_signature}.

    \begin{lem}\label{lem:hyperbolic_iff_spec}
        Let $(X,\langle{}\cdot{},{}\cdot{}\rangle)$ be a Hilbert space and $\mathscr{A}$ be a densely defined self-adjoint operator on $X$ that has discrete spectrum. Then the following are equivalent.
        \begin{enumerate}[label=\upshape(\alph*)]
            \item \label{lem:hyperbolic_iff_spec:hyperbolic}
            $\langle f, \mathscr{A}g \rangle^2 \geq \langle f, \mathscr{A}f \rangle\langle g, \mathscr{A}g \rangle$ for all $f,g \in\mathrm{dom}\mathscr{A}$ such that $\langle g, \mathscr{A}g \rangle > 0$.
            \item \label{lem:hyperbolic_iff_spec:spec}
            The positive eigenspace of $\mathscr{A}$ has dimension at most one.
        \end{enumerate}
    \end{lem}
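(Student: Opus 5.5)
We will use the spectral decomposition of $\mathscr{A}$. Since $\mathscr{A}$ is self-adjoint with discrete spectrum, $X$ has an orthonormal basis $(e_k)$ of eigenvectors with eigenvalues $\lambda_k$. Every $f\in\mathrm{dom}\mathscr{A}$ then satisfies
\[
\langle f,\mathscr{A}g\rangle=\sum_k\lambda_k f_k g_k,
\qquad f_k=\langle f,e_k\rangle .
\]
Both directions are then reduced to the finite-dimensional and abstract facts of \cref{sec:hyperbolicity}, applied to the quadratic form $Q(f,g)=\langle f,\mathscr{A}g\rangle$ on the vector space $\mathrm{dom}\mathscr{A}$.

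\ref*{lem:hyperbolic_iff_spec:spec}$\Rightarrow$\ref*{lem:hyperbolic_iff_spec:hyperbolic}. If $\mathscr{A}$ has no positive eigenvalue, then $Q\le 0$ everywhere and the statement is vacuous. Otherwise, let $e_0$ span the positive eigenspace and set $\xi(f)=\langle f,e_0\rangle$. For $f\in\ker\xi\cap\mathrm{dom}\mathscr{A}$, the spectral expansion contains only nonpositive eigenvalues, so $Q(f,f)=\sum_{\lambda_k\le 0}\lambda_k f_k^2\le 0$. Convergence of this sum is justified because $f\in\mathrm{dom}\mathscr{A}$ gives $\sum\lambda_k^2f_k^2<\infty$, and Cauchy--Schwarz then controls $\sum\lambda_k f_k^2$. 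Thus $\xi$ is a linear form on $\mathrm{dom}\mathscr{A}$ whose kernel $Q$ is negative semidefinite on, and \cref{lem:hyperbolic_iff} (b)$\Rightarrow$(a) gives the reverse Cauchy--Schwarz inequality.

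\ref*{lem:hyperbolic_iff_spec:hyperbolic}$\Rightarrow$\ref*{lem:hyperbolic_iff_spec:spec}. Suppose instead that there are two orthonormal eigenvectors $e_1,e_2$ with positive eigenvalues $\lambda_1,\lambda_2>0$. Both lie in $\mathrm{dom}\mathscr{A}$. Take $g=e_1$, so that $Q(g,g)=\lambda_1>0$, and $f=e_2$. Then $Q(f,g)=0$ while $Q(f,f)Q(g,g)=\lambda_1\lambda_2>0$, which contradicts (a). Equivalently, $Q$ restricted to $\mathrm{span}(e_1,e_2)$ is positive definite, so it cannot be negative semidefinite on any hyperplane of that plane, in line with \cref{lem:hyperbolic_iff,lem:hyperbolic_iff_signature}.

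The only genuinely delicate point is in the first direction: making sure the form is evaluated only on $\mathrm{dom}\mathscr{A}$, where the spectral series converge absolutely. This is a routine issue rather than a real obstacle, since the lemma is stated only for $f,g\in\mathrm{dom}\mathscr{A}$.
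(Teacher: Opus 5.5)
Your proof is correct and follows the paper's argument. For (b)$\Rightarrow$(a) you, like the paper, set $\xi(f)=\langle f,e_0\rangle$, use the spectral expansion to see that $Q$ is negative semidefinite on $\ker\xi$, and invoke \cref{lem:hyperbolic_iff}. For (a)$\Rightarrow$(b) the paper notes that $Q$ is an inner product on the positive eigenspace, so (a) forces equality in Cauchy--Schwarz and hence linear dependence; you reach the same contradiction more directly by testing (a) on two orthonormal positive eigenvectors, where $Q(f,g)=0<Q(f,f)Q(g,g)$ (your convergence check on $\mathrm{dom}\mathscr{A}$ is a detail the paper leaves implicit).
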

    \begin{proof}
        Denoting by $\mathrm{spec}(\mathscr{A}) \subseteq \R$ the spectrum of $\mathscr{A}$ and $P_\lambda$ the orthogonal projection onto the eigenspace to an eigenvalue $\lambda\in \mathrm{spec}(\mathscr{A})$,   
        the Hilbert space $X$ decomposes into a direct orthogonal sum of the eigenspaces of $\mathscr{A}$ and for all $f\in\mathrm{dom}(\mathscr{A})$,
        \begin{equation}\label{eq:hyperbolic_iff_spec:proof}
            \langle f, \mathscr{A}f \rangle
            = \sum_{\lambda \in \mathrm{spec}(\mathscr{A})} \lambda\, \langle P_{\lambda}f , P_{\lambda}f \rangle
        \end{equation}
        
        \ref*{lem:hyperbolic_iff_spec:hyperbolic}$\Rightarrow$\ref*{lem:hyperbolic_iff_spec:spec}:
        By \cref{eq:hyperbolic_iff_spec:proof}, the quadratic form $Q(f,g)=\langle f, \mathscr{A}g \rangle$ defines an inner product on the positive eigenspace of $\mathscr{A}$.
        Hence, whenever $f,g \in \mathrm{dom}\mathscr{A}$ lie in the positive eigenspace of $\mathscr{A}$, then $\langle f, \mathscr{A}g \rangle^2 \leq \langle f, \mathscr{A}f \rangle\langle g, \mathscr{A}g \rangle$
        by the Cauchy--Schwarz inequality.        
        By our assumption, equality is attained, and thus $f$ and $g$ are linearly dependent. 

        \ref*{lem:hyperbolic_iff_spec:spec}$\Rightarrow$\ref*{lem:hyperbolic_iff_spec:hyperbolic}:
        By our assumption, the positive eigenspace of $\mathscr{A}$ is spanned by one single element $g\in\mathrm{dom}\mathscr{A}$. Thus, we may define a linear form $\xi:\mathrm{dom}\mathscr{A} \to \R, \xi(f)=\langle f,g\rangle$. By \cref{eq:hyperbolic_iff_spec:proof}, the quadratic form $Q(f,g)=\langle f, \mathscr{A}g \rangle$ is negative semidefinite on $\ker\xi$. Thus, \cref{lem:hyperbolic_iff} yields that $Q$ satisfies the desired reverse Cauchy--Schwarz inequality.
    \end{proof}

    We are now ready to prove \cref{thmintro:Alexandrov-Fenchel}.    

    \begin{thm}\label{thm:A-F-ineq_zeta}
        Let $2 < j \leq n-1$ and $\zeta\in C^2(\S^{n-1})$ be so that for every $u\in\S^{n-1}$, the matrix $D^2\zeta(u)$ is positive semidefinite or spectrally $(n-j)$-balanced. Then for all $K,L\in\K^n$ and $\bm{M}=(M_1,\ldots,M_{j-2})\subseteq \K^n$,
        \begin{equation}\label{eq:A-F-ineq_zeta}
        \V(K,L,\bm{M},\zeta,{\BB}^{[n-j-1]})^2 
        \geq \V(K,K,\bm{M},\zeta,{\BB}^{[n-j-1]}) \V(L,L,\bm{M},\zeta,{\BB}^{[n-j-1]}).
        \end{equation}
    \end{thm}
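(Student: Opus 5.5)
We follow the Shenfeld--van Handel spectral-gap strategy, set up in \cref{prop:differential_operator} and \cref{lem:hyperbolic_iff_spec}. First we reduce to the strict, smooth situation. Replace $\zeta$ by $\zeta_\varepsilon=\zeta+\varepsilon h_{\BB}$. Then $D^2\zeta_\varepsilon(u)=D^2\zeta(u)+\varepsilon I_{u^\perp}$, and at every $u$ this matrix is either positive definite or strictly spectrally $(n-j)$-balanced in $u^\perp$. Indeed, adding $\varepsilon I$ raises the left side of the balance condition by $(n-j+1)\varepsilon$ but the right side only by $\varepsilon$. Likewise, approximate $K,L,\bm M$ in the Hausdorff metric by bodies of class $C^\infty_+$, and translate $M_1$ so that it contains the origin in its interior. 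Both sides of \cref{eq:A-F-ineq_zeta} are continuous in these data: writing $\zeta_\varepsilon$ as a difference of support functions, the mixed volumes are continuous in the bodies, and they are linear in $\zeta$. So it suffices to prove the inequality in the strict, smooth case and let $\varepsilon\to0$.

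In the strict, smooth case, \cref{prop:differential_operator} applies, with $D^2\zeta$ strict at every point. We then show that $\mathscr{A}$ has no positive eigenvalue other than the simple eigenvalue $1$ with eigenfunction $h$. By \cref{lem:hyperbolic_iff_spec}, this gives the reverse Cauchy--Schwarz inequality for $Q(f,g)=\V(f,g,\bm M,\zeta,\BB^{[n-j-1]})$ whenever $Q(g,g)>0$. Taking $g=h_L$, the case $Q(L,L)>0$ follows. If instead $Q(L,L)=0$ or $Q(K,K)=0$, the right side is $0$ because both factors are nonnegative, by positivity from \cref{mixed_discr_pos_iff}; so the inequality holds trivially. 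These are the only cases, since $Q(L,L)\ge0$.

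The core step is the spectral claim, which we prove Hilbert-style by a homotopy. Consider $\zeta_t=(1-t)h_{\BB}+t\zeta$ for $t\in[0,1]$. Since both the positive definite matrices and the strictly $(n-j)$-balanced matrices form convex cones, one must check that pointwise each $D^2\zeta_t(u)$ lies in their union. It does: if $D^2\zeta(u)$ is balanced, then so is any convex combination of it with the identity, because the identity is itself strictly balanced when $j\ge 2$; if $D^2\zeta(u)$ is positive definite, the combination stays positive definite. At $t=0$, with reference bodies $M_1,\dots,M_{j-2},\BB^{[n-j]}$, the classical Alexandrov--Fenchel inequality gives that the positive eigenspace is one-dimensional. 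The eigenvalues of $\mathscr{A}_t$ vary continuously in $t$, and $h$ remains the top eigenfunction with eigenvalue $1$ by construction. So a second positive eigenvalue can appear only by crossing $0$, which means $\ker\mathscr{A}_t\neq\{0\}$ for some $t$. The key claim is therefore that $\ker\mathscr{A}_t=\{0\}$ for all $t$, i.e.\ that the quadratic form $\V(\cdot,\cdot,\bm M,\zeta_t,\BB^{[n-j-1]})$ is nondegenerate on $C^\infty$.

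Proving this nondegeneracy is the main obstacle. We would adapt the argument of Shenfeld--van Handel for the kernel. Suppose $\mathscr{A}_tf=0$, so that $\D(D^2f,D^2h_{M_1},\dots,D^2\zeta_t,I^{[n-j-1]})=0$ pointwise. Then $\V(f,f,\dots)=0$ and $\V(f,h,\dots)=0$. Applying the pointwise reverse inequality \cref{thm:Alexandrov_type_ineq} in $u^\perp$ (dimension $n-1$, index $j-1\ge2$, which is exactly where $j>2$ is needed; at positive definite points, use classical Alexandrov instead), we get
\[
\D(D^2f,D^2f,\bm{\cdot})\le \frac{\D(D^2f,D^2h,\bm\cdot)^2}{\D(D^2h,D^2h,\bm\cdot)}=0 .
\]
Integrating against $f$ and against $h$ via \cref{eq:mixed_volume_integral}, the identity $\V(f,f,h,\ldots)$ computed in two ways forces equality pointwise. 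Then \cref{lem:hyperbolic_equal_case}, together with the nondegeneracy from \cref{thm:mixed_discr_strictly} (with $M_1$ playing the role of $C$ in a step analogous to its induction), yields $D^2f=0$, so $f$ is linear. Finally, linear $f$ lies in the kernel but contributes nothing to positivity, so we work modulo linear functions. Equivalently, we track only crossings of positive eigenvalues through $0$ outside the linear functions, which are a fixed kernel for all $t$.
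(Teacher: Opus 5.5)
Your proposal is sound in outline, but it proves the spectral claim by a different route from the paper. Your reductions (replacing $\zeta$ by $\zeta+\varepsilon h_{\BB}$, smoothing the bodies, translating $M_1$) match the paper's. After that, you use the Hilbert--Alexandrov continuity method, whereas the paper uses the direct Shenfeld--van Handel spectral-gap argument and never deforms $\zeta$.

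The paper integrates the very pointwise inequality you invoke against $h\,d\omega$. That is \cref{thm:Alexandrov_type_ineq} in $u^\perp$ with index $j-1$, or Alexandrov's inequality at positive definite points. Using the symmetry $\V(h,f,f,\dots)=\V(f,f,h,\dots)$, it gets
\[
\pair{\mathscr{A}f}{\mathscr{A}f}_{L^2(\mu)}\geq\pair{f}{\mathscr{A}f}_{L^2(\mu)}
\]
for every smooth $f$. For an eigenfunction with eigenvalue $\lambda$ this reads $\lambda^2\geq\lambda$, so there is no spectrum in $(0,1)$. \cref{prop:differential_operator} then makes $\mathrm{span}(h)$ the entire positive eigenspace. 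Your kernel computation is exactly the case $\lambda=0$ of this estimate; running it for a general eigenvalue makes the homotopy unnecessary.

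Your route costs more, in three ways:
\begin{enumerate}[label=\upshape(\roman*)]
\item You must show that the ordered eigenvalues of $\mathscr{A}_t$ vary continuously in $t$. You only assert this, although these are unbounded operators on the $t$-dependent spaces $L^2(\mu_t)$. It can be done, e.g.\ by a min--max argument using that $Q_t$ and $\mu_t$ are affine in $t$ and uniformly elliptic, but it is real work.
\item You need pointwise nondegeneracy: \cref{thm:mixed_discr_strictly} at strictly balanced points, and the classical nondegeneracy of Alexandrov's form at positive definite points. \cref{thm:mixed_discr_strictly} does not cover the latter, since positive definite does not imply strictly balanced.
\item You must keep track of the $n$-dimensional kernel of linear functions.
\end{enumerate}
The paper needs only the inequality, not nondegeneracy. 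In return, your approach gives extra information: in the strict smooth setting, the radical of $\V(\cdot,\cdot,\bm M,\zeta,\BB^{[n-j-1]})$ consists exactly of the linear functions. That is the natural starting point for studying equality cases.
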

    \begin{proof}
        By continuity of the mixed volume, it suffices to show that the quadratic form $\V({}\cdot{},{}\cdot{},\bm{M},\zeta,{\BB}^{[n-j-1]})$ is hyperbolic on $C^\infty(\S^{n-1})$ in the case where $D^2\zeta(u)$ is positive definite or strictly spectrally $(n-j)$-balanced for every $u\in\S^{n-1}$, and the convex bodies $K,L,M_1,\ldots,M_{j-2}$ are all of class $C^\infty_+$. Moreover, by translation invariance of the mixed volume, we may assume that $M_1$ contains the origin in its interior. At every point $u\in\S^{n-1}$, we have that 
        \begin{align*}
            &\frac{\D(D^2f,D^2h,D^2h_{M_2},\ldots,D^2h_{M_{j-2}},D^2\zeta,I_{u^{\perp}}^{[n-j-1]})^2 }{\D(D^2h,D^2h,D^2h_{M_2},\ldots,D^2h_{M_{j-2}},D^2\zeta,I_{u^{\perp}}^{[n-j-1]})} \\
            &\qquad \geq \D(D^2f,D^2f,D^2h_{M_2},\ldots,D^2h_{M_{j-2}},D^2\zeta,I_{u^{\perp}}^{[n-j-1]})
        \end{align*}
        by employing Alexandrov's inequality~\cref{eq:Alexandrov_matrix_ineq} or the mixed discriminant inequality~\cref{eq:Alexandrov_type_ineq}.
        
        Next, we define the differential operator $\mathscr{A}$ and measure $\mu$ as in \cref{eq:differential_operator} and proceed precisely as in \cite{Shenfeld2019}*{Lemma~3.1}. For every function $f\in C^\infty(\S^{n-1})$,         
        \begin{align*}
            \pair{\mathscr{A}f}{\mathscr{A}f}_{L^2(\mu)}
            &= \int_{\S^{n-1}} h\, \frac{\D(D^2f,D^2h,D^2h_{M_2},\ldots,D^2h_{M_{j-2}},D^2\zeta,I_{u^{\perp}}^{[n-j-1]})^2 }{\D(D^2h,D^2h,D^2h_{M_2},\ldots,D^2h_{M_{j-2}},D^2\zeta,I_{u^{\perp}}^{[n-j-1]})} \, d\omega \\
            &\geq \int_{\S^{n-1}} h\, \D(D^2f,D^2f,D^2h_{M_2},\ldots,D^2h_{M_{j-2}},D^2\zeta,I_{u^{\perp}}^{[n-j-1]}) \, d\omega \\
            &= \int_{\S^{n-1}} f \, \D(D^2f,D^2h,D^2h_{M_2},\ldots,D^2h_{M_{j-2}},D^2\zeta,I_{u^{\perp}}^{[n-j-1]}) \, d\omega \\
            &= \pair{f}{\mathscr{A}f}_{L^2(\mu)}.
        \end{align*}
        In particular, if $\lambda$ is an eigenvalue of $\mathscr{A}$ and $f$ is a corresponding eigenfunction, then $\lambda^2 \geq \lambda$, which is only possible if $\lambda \notin (0,1)$. That is, the spectrum of $\mathscr{A}$ has a gap between $0$ and $1$. According to \cref{prop:differential_operator}, the largest eigenvalue of $\mathscr{A}$ is $1$ which is also a simple eigenvalue, and thus, the positive eigenspace of $\mathscr{A}$ is one-dimensional.
        \cref{lem:hyperbolic_iff_spec} and identity \cref{eq:mixed_vol_diff_op} yield that the quadratic form $\V({}\cdot{},{}\cdot{},\bm{M},\zeta,{\BB}^{[n-j-1]})$  is hyperbolic on $C^\infty(\S^{n-1})$. This concludes the argument.
    \end{proof}

    \begin{rem}\label{rem:homogeneity_2}
        In the case where $j=2$ and no reference bodies occur, the mixed discriminant $\D(D^2f,D^2\zeta,I_{u^\perp}^{[n-3]})$ is the most explicit, so one might expect this case to be particularly easy. However, the mixed discriminant inequality that our approach requires turns out not to hold pointwise, hence this case appears to call for a completely different approach. In a forthcoming article, Brauner, Ortega-Moreno, and Schuster~\cite{Brauner2026} develop a new kind of argument to prove an inequality of the like of \cref{eq:A-F-ineq_zeta} which does not rely on pointwise curvature inequalities.
    \end{rem}

    \subsection{Consequences}
    
    The Alexandrov--Fenchel type inequality~\cref{eq:A-F-ineq_zeta} gives rise to a rich family of geometric inequalities. Just like in the classical Brunn--Minkowski theory, these can now be deduced by purely algebraic manipulations of log-concave sequences (see, e.g., \cite{Schneider2014}*{§7.4}). Below, we demonstrate this for three inequalities, a Minkowski type inequality and a Brunn--Minkowski type inequality, in additive and multiplicative form. In particular, we obtain~\cref{thmintro:Brunn-Minkowski}.

    \begin{thm}\label{thm:Minkowsi-ineq_zeta}
    Let $2 < j \leq n-1$ and $\zeta\in C^2(\S^{n-1})$ be as in \cref{thm:A-F-ineq_zeta}.
    Then for all $K,L\in\K^n$ and $0 < i < j$,
        \begin{equation}\label{eq:Minkowski-ineq_zeta}
            \V(K^{[i]},L^{[j-i]},\zeta,{\BB}^{[n-j-1]})^{j}
            \geq \V(K^{[j]},\zeta,{\BB}^{[n-j-1]})^{i}\, \V(L^{[j]},\zeta,{\BB}^{[n-j-1]})^{j-i}.
        \end{equation}
    \end{thm}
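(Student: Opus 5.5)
Consider the sequence
\begin{equation*}
a_m = \V(K^{[m]},L^{[j-m]},\zeta,{\BB}^{[n-j-1]}), \qquad m=0,\ldots,j.
\end{equation*}
The plan is to show that $(a_m)$ is nonnegative and log-concave, and then extract \cref{eq:Minkowski-ineq_zeta} by the standard manipulation of log-concave sequences.

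\textbf{Log-concavity and nonnegativity.} For $1\leq m\leq j-1$, apply \cref{thm:A-F-ineq_zeta} with the $j-2$ reference bodies $\bm{M}=(K^{[m-1]},L^{[j-m-1]})$. This gives
\begin{equation*}
a_m^2\geq a_{m+1}a_{m-1}.
\end{equation*}
For nonnegativity, first reduce by approximation to $K,L$ of class $C^\infty_+$ and to $D^2\zeta(u)$ positive definite or strictly $(n-j)$-balanced, replacing $\zeta$ by $\zeta+\varepsilon h_{\BB}$. Under these assumptions the integral formula \cref{eq:mixed_volume_integral} and \cref{lem:spec_balanced_implies_positive,mixed_discr_pos_iff} (together with the classical positivity in the semidefinite case) give $a_m>0$. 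The inequality then survives $\varepsilon\to0$ by continuity of the mixed volume.

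\textbf{Extracting the Minkowski inequality.} With all $a_m>0$, log-concavity says that $m\mapsto\log a_m$ is concave on $\{0,\ldots,j\}$. Concavity along the chord from $0$ to $j$ gives
\begin{equation*}
\log a_i\geq \tfrac{j-i}{j}\log a_0+\tfrac{i}{j}\log a_j.
\end{equation*}
Multiplying by $j$ and exponentiating yields $a_i^j\geq a_j^{\,i}\,a_0^{\,j-i}$, which is exactly \cref{eq:Minkowski-ineq_zeta}.

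\textbf{Main obstacle.} The main obstacle is the degenerate case, where some $a_m$ may vanish. This is why the argument is run with strict positivity in the approximating situation and the limit is taken only at the end. The approximation of $\zeta$ needs some care: $Z+\varepsilon I$ preserves both the positive definite and the (strictly) balanced condition at each $u$. This works because the balanced cone is convex and contains $I$ in its interior when $j<n-1$. When $j=n-1$, balance forces semidefiniteness, so $\zeta+\varepsilon h_{\BB}$ is a genuine support function of class $C^2_+$.
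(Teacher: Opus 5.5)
Your proof is correct and follows the paper's argument. You get log-concavity of $a_m$ from \cref{thm:A-F-ineq_zeta} with $\bm{M}=(K^{[m-1]},L^{[j-m-1]})$ and then apply the standard chord argument, which is equivalent to the paper's monotone-quotient manipulation; your approximation step also makes explicit the positivity $a_m>0$ that the paper's division by $a_{i+1}$ uses tacitly. For that positivity, put $K$ (or $L$), translated so the origin is interior, into the first slot of \cref{eq:mixed_volume_integral}: then \cref{mixed_discr_pos_iff} is applied in $u^\perp$ with $j-1$ positive definite matrices and needs only the $(n-j)$-positivity from \cref{lem:spec_balanced_implies_positive}, whereas putting $\BB$ there would require $(n-j-1)$-positivity, which balance does not give.
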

    \begin{proof}
        We denote $a_i=\V(K^{[i]},L^{[j-i]},\zeta,{\BB}^{[n-j-1]})$ for $i\in \{0,\ldots,j\}$. According to the Alexandrov--Fenchel type inequality~\cref{eq:A-F-ineq_zeta}, the finite sequence $(a_i)$ is log-concave, that is,
        $a_i^2 \geq a_{i-1}a_{i+1}$ for $0<i<j$. Consequently, 
        the sequence of quotients $q_i = a_i/a_{i+1}$ is decreasing, that is,
        $q_i \geq q_{i-1}$.
        Hence for $0<i<j$, we obtain
        \begin{equation*}
            \bigg(\frac{a_i}{a_j}\bigg)^{i}
            = (q_i \cdots q_{j-1})^{i}
            \geq q_{i-1}^{i(j-i)}
            \geq (q_0 \cdots q_{i-1})^{j-i}
            = \bigg(\frac{a_0}{a_i}\bigg)^{j-i}.
        \end{equation*}
        Rearranging this inequality yields  $a_i^{j} \geq a_j^{i} a_0^{j-i}$, as desired.
    \end{proof}

    \begin{thm}\label{thm:B-M-ineq_zeta:additive}
        Let $2 < j \leq n-1$ and $\zeta\in C^2(\S^{n-1})$ be as in \cref{thm:A-F-ineq_zeta}.
        Then for all $K,L\in\K^n$ and $\lambda \in (0,1)$,
            \begin{align}\label{eq:B-M-ineq_zeta:additive}
                \begin{split}
                &\V(((1-\lambda)K+\lambda L)^{[j]},\zeta,{\BB}^{[n-j-1]})^{\frac{1}{j}} \\
                &\qquad \geq (1-\lambda) \V(K^{[j]},\zeta,{\BB}^{[n-j-1]})^{\frac{1}{j}} + \lambda
                 \V(L^{[j]},\zeta,{\BB}^{[n-j-1]})^{\frac{1}{j}}.
                \end{split}
            \end{align}
    \end{thm}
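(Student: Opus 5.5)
Set $K_\lambda=(1-\lambda)K+\lambda L$ and $F(M)=\V(M^{[j]},\zeta,{\BB}^{[n-j-1]})$. The plan is to expand $F(K_\lambda)$ by multilinearity and then apply the Minkowski type inequality of \cref{thm:Minkowsi-ineq_zeta} to each mixed term.

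First I need the mixed terms to be nonnegative. By \cref{lem:spec_balanced_implies_positive} and \cref{mixed_discr_pos_iff}, each $\D(D^2h_{K_1},\ldots,D^2h_{K_j},D^2\zeta,I_{u^\perp}^{[n-j-1]})$ is nonnegative pointwise when the $K_i$ are of class $C^2_+$. This holds in both cases: where $D^2\zeta(u)$ is positive semidefinite, and where it is spectrally $(n-j)$-balanced, which gives spectral $(n-j)$-positivity. The integral formula \cref{eq:mixed_volume_integral}, with a support function containing the origin placed in the first slot, then gives $a_i:=\V(K^{[i]},L^{[j-i]},\zeta,{\BB}^{[n-j-1]})\geq 0$. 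Approximation and translation invariance extend this to all $K,L$.

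Multilinearity and symmetry give
\begin{equation*}
F(K_\lambda)=\sum_{i=0}^{j}\binom{j}{i}(1-\lambda)^{i}\lambda^{j-i}a_i .
\end{equation*}
Here $a_j=F(K)$ and $a_0=F(L)$. By \cref{thm:Minkowsi-ineq_zeta}, $a_i\geq a_j^{i/j}a_0^{(j-i)/j}$ for $0<i<j$, and this is trivially an equality for $i\in\{0,j\}$. Substituting these bounds and summing with the binomial theorem yields
\begin{equation*}
F(K_\lambda)\geq\sum_{i=0}^{j}\binom{j}{i}\bigl((1-\lambda)F(K)^{1/j}\bigr)^{i}\bigl(\lambda F(L)^{1/j}\bigr)^{j-i}=\bigl((1-\lambda)F(K)^{1/j}+\lambda F(L)^{1/j}\bigr)^{j}.
\end{equation*}
Taking $j$-th roots gives \cref{eq:B-M-ineq_zeta:additive}. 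The $j$-th root is well defined on the left because $F(K_\lambda)\geq0$, again by nonnegativity.

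The main obstacle is the nonnegativity step. Without it the $j$-th roots are meaningless, and the substitution $a_i\geq a_j^{i/j}a_0^{(j-i)/j}$ could be misused. I also need to check that the proof of \cref{thm:Minkowsi-ineq_zeta} is valid when some $a_i$ vanish. To handle both, I would first prove everything with $K,L$ replaced by $K+\varepsilon{\BB}$ and $L+\varepsilon{\BB}$ and with $\zeta$ replaced by $\zeta+\varepsilon h_{\BB}$. In that setting, strict spectral positivity and \cref{mixed_discr_pos_iff} make all $a_i>0$, so the quotient argument is valid. I would then let $\varepsilon\to0$, using continuity of the mixed volume in all its arguments.
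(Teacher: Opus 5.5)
Your proposal is correct and follows the paper's own proof: expand $\V(((1-\lambda)K+\lambda L)^{[j]},\zeta,{\BB}^{[n-j-1]})$ by multilinearity, bound each coefficient $a_i$ from below using \cref{thm:Minkowsi-ineq_zeta}, and resum with the binomial theorem. Your additional nonnegativity and $\varepsilon$-regularization step (replacing $K,L,\zeta$ by $K+\varepsilon\BB$, $L+\varepsilon\BB$, $\zeta+\varepsilon h_{\BB}$) makes explicit the positivity of the $a_i$, which the paper uses without comment both in the quotient argument and when extracting $j$-th roots. One harmless slip: once a body sits in the first slot, the pointwise mixed discriminant on $u^\perp$ carries only $j-1$ body Hessians, not $j$.
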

    \begin{proof}
        We denote $a_i=\V(K^{[i]},L^{[j-i]},\zeta,{\BB}^{[n-j-1]})$ for $i\in \{0,\ldots,j\}$. By the polynomiality of mixed volumes and inequality~\cref{eq:Minkowski-ineq_zeta},
        \begin{align*}
            &\V(((1-\lambda)K+\lambda L)^{[j]},\zeta,{\BB}^{[n-j-1]})
            = \sum_{i=0}^{j} \binom{j}{i}(1-\lambda)^{i}\lambda^{j-i} a_i  \\
            &\qquad \geq \sum_{i=0}^{j} \binom{j}{i}(1-\lambda)^{i}\lambda^{j-i} a_j^{\frac{i}{j}}\, a_0^{\frac{j-i}{j}} 
            = \left( (1-\lambda)a_j^{\frac{1}{j}} + \lambda a_0^{\frac{1}{j}} \right)^{j}.
        \end{align*}
         Thus, we obtain the desired Brunn--Minkowski type inequality.
    \end{proof}

    \begin{thm}\label{thm:B-M-ineq_zeta}
    Let $2 < j \leq n-1$ and $\zeta\in C^2(\S^{n-1})$ be as in \cref{thm:A-F-ineq_zeta}.
    Then for all $K,L\in\K^n$ and $\lambda \in (0,1)$,
        \begin{align}\label{eq:B-M-ineq_zeta}
            \begin{split}
            &\V(((1-\lambda)K+\lambda L)^{[j]},\zeta,{\BB}^{[n-j-1]}) \\
            &\qquad \geq \V(K^{[j]},\zeta,{\BB}^{[n-j-1]})^{1-\lambda} \, \V(L^{[j]},\zeta,{\BB}^{[n-j-1]})^{\lambda}.
            \end{split}
        \end{align}
    \end{thm}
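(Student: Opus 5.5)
The plan is to deduce the multiplicative form directly from the additive Brunn--Minkowski type inequality of \cref{thm:B-M-ineq_zeta:additive}, combined with the weighted arithmetic--geometric mean inequality. Write $F(M)=\V(M^{[j]},\zeta,{\BB}^{[n-j-1]})$ for $M\in\K^n$, and set $a=F(K)$, $b=F(L)$.

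The first step is to establish nonnegativity, $F(M)\geq 0$ for every $M\in\K^n$. This is needed so that the $j$-th roots and the powers $a^{1-\lambda}b^\lambda$ make sense. By approximation it suffices to treat $M$ of class $C^\infty_+$ and $\zeta$ strict, meaning $D^2\zeta(u)$ is positive definite or strictly spectrally $(n-j)$-balanced, which can be reached by replacing $\zeta$ with $\zeta+\varepsilon h_{\BB}$. Use the integral formula \cref{eq:mixed_volume_integral} with $\zeta$ placed in the first slot, and take $M$ to contain the origin in its interior, which is allowed by translation invariance. This gives
\[
F(M)=\tfrac1n\int_{\S^{n-1}} h_M\,\D\bigl((D^2h_M)^{[j-1]},D^2\zeta,I_{u^\perp}^{[n-j-1]}\bigr)\,d\omega .
\]
The integrand is positive pointwise. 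In the definite case this holds by \cref{lem:mixed_discr_properties}. In the balanced case it follows from \cref{lem:spec_balanced_implies_positive} together with \cref{mixed_discr_pos_iff}, applied in $u^\perp$ with $j-1$ in place of $j$; the index bookkeeping matches because $(n-1)-(j-1)=n-j$. Letting $\varepsilon\to0$ and using continuity then gives $F\geq 0$ in general.

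The second step is the main argument. By \cref{thm:B-M-ineq_zeta:additive},
\[
F((1-\lambda)K+\lambda L)^{1/j}\geq (1-\lambda)a^{1/j}+\lambda b^{1/j}\geq \bigl(a^{1/j}\bigr)^{1-\lambda}\bigl(b^{1/j}\bigr)^{\lambda},
\]
where the last inequality is weighted AM--GM for nonnegative reals. Raising both sides to the $j$-th power yields \cref{eq:B-M-ineq_zeta}. If $a=0$ or $b=0$, the right-hand side of \cref{eq:B-M-ineq_zeta} vanishes, so the claim reduces to the nonnegativity from the first step.

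The main obstacle is the nonnegativity step, specifically checking that the approximation preserves the hypotheses. Adding $\varepsilon I_{u^\perp}$ keeps a semidefinite $D^2\zeta$ in the definite case, and it turns a balanced matrix into a strictly balanced one because it raises both sides of the defining inequality, with the left side gaining $(n-j+1)\varepsilon$ and the right side only $\varepsilon$. Since both cases are handled, the rest is routine.
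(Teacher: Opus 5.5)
Your proof is correct and follows the paper's route exactly: the paper also obtains \cref{eq:B-M-ineq_zeta} from the additive inequality \cref{eq:B-M-ineq_zeta:additive} via weighted AM--GM, and leaves implicit the nonnegativity of $\V(M^{[j]},\zeta,{\BB}^{[n-j-1]})$ that you check explicitly. One small slip in that check: it is $h_M$, not $\zeta$, that occupies the first slot of \cref{eq:mixed_volume_integral}, as your displayed formula correctly shows.
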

    \begin{proof}
        This follows immediately from \cref{eq:B-M-ineq_zeta:additive} by an application of the inequality between weighted arithmetic and geometric mean.
    \end{proof}

    \section{Mean section operators}
    \label{sec:mean_sec_ops}

    In this section, we prove \cref{thmintro:B-M_mean_sec}, establishing a Brunn--Minkowsi type inequality for the intrinsic volumes of mean section bodies.
    They were defined in \cref{eq:MSO_defi} up to normalization of $\mu_k$, which we now fix to be $\mu_k(\{E\in\mathrm{AGr}_k(\R^n) \mid E\cap\BB\neq\emptyset\})=\kappa_{n-k}$, where throughout $\kappa_n$ denotes the volume of the $n$-dimensional unit ball.
    As was pointed out in the introduction, one important tool is an integral representation involving a family of functions $g_k\in C^\infty(-1,1)$ that was discovered by Berg~\cite{Berg1969} in his work on the Christoffel problem~\cite{Christoffel1865}. For every convex body $K\in\K^n$ of dimension at least two,
    \begin{equation}
        h(K - s(K),u)
        = \int_{\S^{n-1}} g_n(\pair{u}{v}) \, dS_{1}(K,v),
        \qquad u\in\S^{n-1},
    \end{equation}
    where $s(K)=\kappa_n^{-1} \int h_K(u)u\, du$ is the \emph{Steiner point} of $K$ (cf.~\cite{Schneider2014}*{p.~50}).
    Goodey and Weil~\cites{Goodey1992,Goodey2014} showed that for $2\leq k\leq n-1$ and a convex body $K\in\K^n$ of dimension at least $n-k+2$,
    \begin{equation}\label{eq:MSO_integral_rep}
        h(\MSO_k K - s(\MSO_k K) ,u)
        =  c_{n,k} \int_{\S^{n-1}} g_k(\pair{u}{v}) \, dS_{n-k+1}(K,v),
        \qquad u\in\S^{n-1},
    \end{equation}
    where throughout $c_{n,k} = (k\kappa_k\kappa_{n-k})/((n-k+1)n\kappa_n)$.
    Notably, formula~\cref{eq:MSO_integral_rep} expresses $\MSO_k$ in terms of $g_k$, regardless of the dimension $n$ of the ambient space.

    For notational convenience, we introduce the convolution product on the unit sphere that is naturally induced by the action of the Lie group $\mathrm{SO}(n)$ (see, e.g., \cites{Grinberg1999,Takeuchi1994}). For our purposes however, the following modified definition suffices: Given a function $f\in C(\S^{n-1})$ and $g\in C(-1,1)$ such that $\int_{-1}^1 |g(t)|(1-t^2)^{\frac{n-3}{2}}\,dt < \infty$, the convolution $f\ast g \in C(\S^{n-1})$ is given by
    \begin{align}\label{eq:spherical_convol}
        (f\ast g)(u)
        = \int_{\S^{n-1}} g(\pair{u}{v}) f(v)\, dv,
        \qquad u\in\S^{n-1}.
    \end{align} 
    In order to see that that this is well-defined, switch to spherical cylinder coordinates (see, e.g.~\cite{Groemer1996}*{p.~9}) and note that the integral above can be written as
    \begin{align*}
        (f\ast g)(u)
        = \frac{1}{(n-1)\kappa_{n-1}} \int_{-1}^1 \int_{\S^{n-1}\cap u^\perp} f(tu+\sqrt{1-t^2}w)\, dw\, g(t)(1-t^2)^{\frac{n-3}{2}}\, dt,
    \end{align*}
    where throughout $\kappa_m$ denotes the volume of the unit ball in $\R^m$. 

    In light of integral representation~\cref{eq:MSO_integral_rep} for the mean section operators, we are particular interested in convolution products involving the functions $g_k \in C^\infty(-1,1)$ introduced by Berg~\cite{Berg1969} (which we define later). It was shown in \cite{Berg1969}*{Théorème~3.3} that
    \begin{equation}\label{eq:Berg_fct_limit}
        \lim_{t\to \pm 1} (1-t^2)^{\frac{k-2}{2}}g_k(t)
        = 0,
    \end{equation}
    which implies that for $2\leq k\leq n-1$, the expression $(1-t^2)^{\frac{n-3}{2}}g_k(t)$ is bounded, and thus, for $f\in C(\S^{n-1})$, we always have $f\ast g_k \in C(\S^{n-1})$. 

    The following easy yet valuable lemma, based on techniques of Lutwak~\cite{Lutwak1990}, connects mixed volumes of mean section bodies to functionals of area measures of the form~\cref{eq:functional_area_measure}.

    \begin{lem}\label{lem:Lutwaks_trick}
        Let $1\leq i\leq n$ and $2\leq k < n-1$. Whenever $K,M\in\K^n$ and $M$ is of class $C^2_+$, then
        \begin{equation}\label{eq:Lutwaks_trick}
            \V(\MSO_k K, M^{[i-1]}, {\BB}^{[n-i]})
            = \V(K^{[j]},\zeta,{\BB}^{[n-j-1]}),
        \end{equation}
        where $j=n-k+1$ and $\zeta = c_{n,k} S_{i-1}(M,{}\cdot{})\ast g_k$.
    \end{lem}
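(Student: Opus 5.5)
We compute both sides as integrals against area measures and move the convolution kernel from one factor to the other.

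\emph{Step 1: express the left side through the area measure of $M$.} The left hand side of \cref{eq:Lutwaks_trick} is linear in its first argument, so it equals
\begin{equation*}
\V(\MSO_k K, M^{[i-1]},{\BB}^{[n-i]})=\frac1n\int_{\S^{n-1}} h(\MSO_kK,u)\,dS_{i-1}(M,u).
\end{equation*}
Here $S_{i-1}(M,\cdot)$ denotes the mixed area measure $S(M^{[i-1]},{\BB}^{[n-i]},\cdot)$. Since $M$ is of class $C^2_+$, this measure has a continuous density with respect to $\omega$. We identify $S_{i-1}(M,\cdot)$ with that density, which is what the convolution $S_{i-1}(M,\cdot)\ast g_k$ requires.

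Mixed area measures have their centroid at the origin, so the linear function $\pair{s(\MSO_kK)}{u}$ integrates to zero against $S_{i-1}(M,\cdot)$. For $i=1$ the measure is $\omega$, which also annihilates linear functions. We may therefore replace $h(\MSO_kK,\cdot)$ by $h(\MSO_kK-s(\MSO_kK),\cdot)$.

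\emph{Step 2: apply the Goodey--Weil representation.} Assume first that $\dim K\ge n-k+2$. Then \cref{eq:MSO_integral_rep} holds, and Fubini gives
\begin{equation*}
\frac{c_{n,k}}{n}\int_{\S^{n-1}}\int_{\S^{n-1}} g_k(\pair uv)\,dS_{n-k+1}(K,v)\,S_{i-1}(M,u)\,du=\frac1n\int_{\S^{n-1}}\zeta\,dS_{j}(K,\cdot),
\end{equation*}
with $j=n-k+1$. By \cref{eq:F_zeta_mixed_vol}, the right hand side is $\V(K^{[j]},\zeta,{\BB}^{[n-j-1]})$. The interchange of integrals needs an integrability bound. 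For this we use \cref{eq:Berg_fct_limit}: since $k<n-1$, the function $(1-t^2)^{(n-3)/2}g_k(t)$ is bounded, so $\sup_v\int |g_k(\pair uv)|\,du<\infty$. Together with the boundedness of the density of $S_{i-1}(M,\cdot)$, this justifies Tonelli.

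\emph{Step 3: identify $\zeta$ as a legitimate argument of the mixed volume.} Formula \cref{eq:F_zeta_mixed_vol} is stated for $C^2$ functions, so we must show $\zeta\in C^2(\S^{n-1})$, or argue by approximation. This is the main obstacle. My first attempt would be to approximate $g_k$ by smooth kernels, or to smooth the density of $M$ by convolution with an approximate identity. We then pass to the limit in $\int\zeta\,dS_j(K,\cdot)$ using only the continuity of $\zeta$, which follows from the bound in Step 2. This interprets the right side of \cref{eq:Lutwaks_trick} as $\frac1n\int\zeta\,dS_j(K,\cdot)$. The $C^2$ regularity itself is deferred to the later analysis of Berg's functions that the introduction mentions.

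\emph{Step 4: remove the dimension assumption on $K$.} For bodies with $\dim K<n-k+2$, approximate $K$ in the Hausdorff metric by full-dimensional bodies. Both sides are continuous in $K$: $\MSO_k$ is continuous, mixed volumes are continuous, and $S_j(K,\cdot)$ is weakly continuous while $\zeta$ is continuous. The identity therefore passes to the limit.
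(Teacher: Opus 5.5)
Your proposal is correct and follows the paper's proof. You express the left side as $\frac1n\int h(\MSO_kK,\cdot)\,dS_{i-1}(M,\cdot)$, discard the Steiner point because $S_{i-1}(M,\cdot)$ annihilates linear functions, insert \cref{eq:MSO_integral_rep}, interchange the integrals, and recognize $\frac1n\int\zeta\,dS_j(K,\cdot)$ via \cref{eq:F_zeta_mixed_vol}, with a continuity argument in $K$. The paper instead reduces to $K$ of class $C^2_+$ at the outset, whereas your integrability bound makes the Fubini step explicit. The approximation detour in your Step 3 is unnecessary, since $\zeta\in C^2(\S^{n-1})$ follows directly from \cref{lem:Berg_fct_convol_trafo} applied to the continuous density of $S_{i-1}(M,\cdot)$.
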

    \begin{proof}
        By continuity of the mixed volume and mean section operator, we may also assume $K\in\K^n$ to be of class $C^2_+$. Then the area measures $S_j(K,{}\cdot{})$ and $S_{i-1}(M,{}\cdot{})$ each have a continuous density.
        Combining \cref{eq:F_zeta_mixed_vol}, \cref{eq:MSO_integral_rep}, and the classical fact that $S_{i-1}(M,{}\cdot{})$ integrates linear functions to zero, we have
        \begin{align*}
            &V(\MSO_k K, M^{[i-1]}, {\BB}^{[n-i]})
            = \frac{1}{n} \int_{\S^{n-1}} h(\MSO_k K,u)\, dS_{i-1}(M,u) \\
            &\qquad = \frac{c_{n,k}}{n} \int_{\S^{n-1}} \int_{\S^{n-1}} g_k(\pair{u}{v})\, dS_j(K,v) \, dS_{i-1}(M,u).
        \end{align*}
        By Fubini's theorem, we may change the order of integration, which yields
        \begin{align*}
            &V(\MSO_k K, M^{[i-1]}, {\BB}^{[n-i]})
            = \frac{c_{n,k}}{n} \int_{\S^{n-1}} \int_{\S^{n-1}} g_k(\pair{u}{v}) \, dS_{i-1}(M,u) \, dS_j(K,v) \\
            &\qquad = \frac{1}{n} \int_{\S^{n-1}} \zeta(v) \, dS_j(K,v) 
            = V(K^{[j]},\zeta,{\BB}^{[n-j-1]}),
        \end{align*}
        where the final equality is again due to \cref{eq:F_zeta_mixed_vol}.
    \end{proof}

     \subsection{Berg's functions} 

    In light of the previous lemma, we would like to determine when a convolution product $\zeta =f\ast g_k$ meets the requirements of~\cref{thmintro:Brunn-Minkowski}. In order to examine the properties of the matrix $D^2\zeta(u)$, one would naively interchange integration and differentiation to exploit known properties of $g_k$. To this end, we turn to a discussion of the functions $g_k$ and their regularity.
    The functions $g_k\in C^{\infty}(-1,1)$, for $k\geq 2$, were defined by Berg~\cite{Berg1969} recursively (with a different normalization) as follows.
    \begin{align*}
        g_2(t) &= \frac{1}{2\pi}(\pi-\arccos t)(1-t^2)^{\frac{1}{2}} - \frac{1}{4\pi}t, \\
        g_3(t) &= \frac{1}{2\pi} \bigg(1 + t\log(1-t) + \bigg(\frac{4}{3}-\log2\bigg) t\bigg), \\
        g_{k+2}(t) &= \frac{k+1}{2\pi}g_{k}(t) + \frac{k+1}{2\pi(k-1)}tg_{k}'(t) + \frac{k+1}{2\pi k\kappa_k}t
    \end{align*}
    The key property of Berg's functions, which we will use repeatedly, is that they satisfy the following linear second order differential equation. For all $k\geq 2$ and $t\in (-1,1)$,
    \begin{equation}\label{eq:Berg_fct_ODE}
        \frac{1}{k-1}(1-t^2)g_k''(t) - tg_k'(t) + g_k(t) = - \frac{1}{\kappa_k}t,
    \end{equation}
    as can be shown by induction on $k\geq 2$. Regarding their behavior at the endpoints, $g_2\in C[-1,1]$ and for $k\geq 3$, we have $g_k \in C[-1,1)$ and $\lim_{t\to 1}g_k(t) = -\infty$. That is, all of these functions but $g_2$ have a singularity at $t=1$ the order of which depends on $k\geq 3$, compare~\cref{eq:Berg_fct_limit}. A refined description of Berg's functions and their higher order derivatives at $t=\pm 1$ was obtained in \cite{Brauner2024}*{Lemma~4.3}.

    \begin{lem}[{\cite{Brauner2024}}]
        For all $k\geq 2$ and $m\geq 0$ where $(k,m)\notin \{(2,0),(3,0)\}$,
        \begin{align}
            \lim_{t\to -1} (1-t^2)^{\frac{k-3}{2} + m} g_k^{(m)}(t) &= 0, 
            \label{eq:Berg_fct_limit_left} \\
            \lim_{t\to +1} (1-t^2)^{\frac{k-3}{2} + m} g_k^{(m)}(t) &= - \frac{(k-1)(2\pi)^{m-2} }{\kappa_{k+2m-5}}. \label{eq:Berg_fct_limit_right}
        \end{align}
    \end{lem}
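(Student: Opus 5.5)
Since the lemma is quoted from \cite{Brauner2024}, the natural plan is to derive both limits directly from the ODE~\cref{eq:Berg_fct_ODE}, arguing by induction on $m$. The inputs are the explicit formulas for $g_2$ and $g_3$ and the recursion expressing $g_{k+2}$ through $g_k$ and $g_k'$.

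\emph{Base cases in $m$.} First handle $m=0$ (for $k\geq 4$) and $m=1$ (for $k\geq 2$).
\begin{enumerate}[label=\upshape(\roman*)]
\item For $k=2,3$, compute directly from the formulas. For instance, $g_3'(t)=\frac{1}{2\pi}\bigl(\log(1-t)-\frac{t}{1-t}+\frac43-\log 2\bigr)$, so $(1-t^2)g_3'(t)\to-\frac{2}{2\pi}$ as $t\to1$. This is consistent with $-(k-1)(2\pi)^{-1}/\kappa_0$ for $\kappa_0=1$, and the left limit vanishes.
\item For $k\geq 4$, use the recursion $g_{k+2}=\frac{k+1}{2\pi}\bigl(g_k+\frac{t}{k-1}g_k'\bigr)+O(t)$ and multiply by $(1-t^2)^{\frac{k-1}{2}+m}$. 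The term $g_k$ carries an extra factor $(1-t^2)$ and so vanishes. The term $t\,g_k^{(m+1)}$ has exactly the right power and yields the limit for $g_{k+2}^{(m)}$ from that for $g_k^{(m+1)}$. The terms $m\,g_k^{(m)}$ produced by differentiating $t g_k'$ carry an extra factor and vanish.
\end{enumerate}
Hence, by a double induction, it suffices to know all $m\geq1$ limits for $k=2,3$.

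\emph{Induction in $m$ for fixed $k$.} Differentiate the ODE $m$ times:
\[
\tfrac{1}{k-1}(1-t^2)g_k^{(m+2)}-\bigl(\tfrac{2m}{k-1}+1\bigr)t\,g_k^{(m+1)}+\bigl(1-m-\tfrac{m(m-1)}{k-1}\bigr)g_k^{(m)}=c\,\delta_{m\le1}.
\]
Multiply by $(1-t^2)^{\frac{k-3}{2}+m+1}$. The $g_k^{(m)}$ term then carries two surplus powers and tends to zero by the induction hypothesis. This leaves
\[
\lim (1-t^2)^{\frac{k-3}{2}+m+2}g_k^{(m+2)}=(k-1+2m)\lim (1-t^2)^{\frac{k-3}{2}+m+1}g_k^{(m+1)}.
\]
The resulting product of factors $(k-1+2m)$ must match the ratio $2\pi\,\kappa_{k+2m-3}/\kappa_{k+2m-1}$, using $\kappa_{d}/\kappa_{d-2}=2\pi/d$. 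This is exactly the identity the stated constant requires, so the $+1$ limit follows, and the $-1$ limit stays zero by the same recursion.

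The main obstacle is that this recursion only transfers limits between consecutive orders $m+1\to m+2$, so one needs existence of the base limit with the correct power. For $k=2$ this needs care: $g_2$ is continuous at $\pm1$, $g_2'\sim$ a multiple of $(1-t)^{-1/2}$, and the constant involves $\kappa_{-1}$, interpreted via the Gamma function. I would verify $m=1,2$ by hand for $k=2,3$. If hand expansion becomes messy, I would instead integrate the first-order ODE for $w=(1-t^2)^{(k-1)/2}g_k'$, which is obtained from the ODE by an integrating factor and gives $g_k'$ near $\pm1$ explicitly up to integrable terms.
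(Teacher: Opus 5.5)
Your argument is correct. The paper gives no proof of this lemma; it imports it from \cite{Brauner2024}. So your derivation is a self-contained alternative that uses only data displayed in the paper: the formulas for $g_2,g_3$, Berg's recursion, and the ODE~\cref{eq:Berg_fct_ODE}.

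Both reductions check out with the exact constants. In the step $k\to k+2$, write $g_{k+2}^{(m)}=\frac{k+1}{2\pi}g_k^{(m)}+\frac{k+1}{2\pi(k-1)}\bigl(t\,g_k^{(m+1)}+m\,g_k^{(m)}\bigr)+(\text{derivative of a linear term})$ and multiply by $(1-t^2)^{\frac{k-1}{2}+m}$. The limit is $\frac{k+1}{2\pi(k-1)}\cdot\bigl(-\frac{(k-1)(2\pi)^{m-1}}{\kappa_{k+2m-3}}\bigr)$, which is exactly the claimed value for $(k+2,m)$; no identity for $\kappa$ is needed. In the step $m+1\to m+2$, the factor $k-1+2m$ equals $2\pi\kappa_{k+2m-3}/\kappa_{k+2m-1}$ by $\kappa_d/\kappa_{d-2}=2\pi/d$. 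The citation buys brevity. Your route buys an independent check of the constants, including the unusual value $\kappa_{-1}$.

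Some points to tidy up.

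\begin{enumerate}[label=\upshape(\arabic*)]
\item After multiplying the differentiated ODE by $(1-t^2)^{\frac{k-3}{2}+m+1}$, the $g_k^{(m)}$ term carries one surplus factor $(1-t^2)$, not two. One suffices when $(1-t^2)^{\frac{k-3}{2}+m}g_k^{(m)}$ is bounded.

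\item That boundedness fails precisely in the excluded cases. Since $g_2(1)=-\frac{1}{4\pi}$, we get $(1-t^2)^{-1/2}g_2(t)\to-\infty$, and $g_3(t)\to-\infty$ as $t\to1$. So at $m=0$, both in your step~(ii) and in the first step of the $m$-recursion for $k=2,3$, you must use a direct fact instead of the induction hypothesis: $(1-t^2)^{\frac{k-1}{2}}g_k(t)\to0$ at $\pm1$, because $g_2$ is bounded and $g_3$ has only a logarithmic singularity.

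\item With this, the only base limits needed are $m=1$ for $k=2,3$; no hand computation at $m=2$ is required. For $k=2$ it is immediate. We have $(1-t^2)^{1/2}g_2'(t)=\frac{1}{2\pi}\bigl(\sqrt{1-t^2}-t(\pi-\arccos t)\bigr)-\frac{1}{4\pi}\sqrt{1-t^2}$, which tends to $-\frac12$ at $t=1$ and to $0$ at $t=-1$. This matches $-(2\pi)^{-1}/\kappa_{-1}$ with $\kappa_{-1}=\pi^{-1/2}/\Gamma(\frac12)=\frac1\pi$. The Gamma-function form of $\kappa_d/\kappa_{d-2}=2\pi/d$ is then needed exactly once, at $d=1$ (the step $1\to2$ for $k=2$).

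\item Your fallback is misstated. The function $w=(1-t^2)^{\frac{k-1}{2}}g_k'$ satisfies $w'=-(k-1)(1-t^2)^{\frac{k-3}{2}}\bigl(g_k+t/\kappa_k\bigr)$, which is not closed in $w$. The quantity with a closed first-order equation is $(1-t^2)^{\frac{k-1}{2}}(g_k-tg_k')$, as in the proof of \cref{lem:Berg_fct_concave}. You do not need it anyway.
\end{enumerate}
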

    From the differential equation and behavior at the endpoints, one can deduce that Berg's functions are strictly concave. 
    Let us note that the first part of the following lemma was already shown in \cite{Goodey2014}*{Lemma~4.5}.
    
    \begin{lem}\label{lem:Berg_fct_concave}
        For all $k\geq 2$ and $t\in (-1,1)$,
        \begin{equation}\label{eq:Berg_fct_concave}
            g_k(t) - t g_k'(t) > 0
            \qquad\text{and}\qquad 
            g_k''(t) < 0.
        \end{equation}
        In particular, for each $k\geq 2$, the function $g_k$ is strictly concave.
    \end{lem}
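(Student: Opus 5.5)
We want to show $g_k(t)-tg_k'(t)>0$ and $g_k''(t)<0$ on $(-1,1)$, using only the ODE \cref{eq:Berg_fct_ODE} and the endpoint asymptotics \cref{eq:Berg_fct_limit_left,eq:Berg_fct_limit_right}. The natural auxiliary quantity is $\varphi_k(t)=g_k(t)-tg_k'(t)$. Its derivative is $\varphi_k'(t)=-tg_k''(t)$, and the ODE can be rewritten as
\begin{equation*}
    \frac{1}{k-1}(1-t^2)g_k''(t)+\varphi_k(t)=-\frac{t}{\kappa_k}.
\end{equation*}
So the two claimed inequalities are coupled, and I plan to derive both from a single first-order relation for $\varphi_k$.

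\emph{Step 1: a first-order ODE for $\varphi_k$.} Substituting $g_k''=-\varphi_k'/t$ for $t\neq 0$ gives
\begin{equation*}
    -\frac{1-t^2}{(k-1)t}\varphi_k'(t)+\varphi_k(t)=-\frac{t}{\kappa_k}.
\end{equation*}
Equivalently, $(1-t^2)\varphi_k'=(k-1)t\varphi_k+(k-1)t^2/\kappa_k$. This is linear, with integrating factor $(1-t^2)^{(k-1)/2}$. Setting $\psi_k=(1-t^2)^{(k-1)/2}\varphi_k$, we get
\begin{equation*}
    \psi_k'(t)=\frac{k-1}{\kappa_k}\,t^2(1-t^2)^{\frac{k-3}{2}}\geq 0,
\end{equation*}
and this identity extends to $t=0$ by continuity. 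Hence $\psi_k$ is strictly increasing on $(-1,1)$.

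\emph{Step 2: boundary value at $-1$.} We need $\lim_{t\to-1}\psi_k(t)\geq 0$; then strict monotonicity gives $\psi_k>0$, i.e.\ $\varphi_k>0$ on $(-1,1)$, which is the first inequality. For $k\geq 4$, \cref{eq:Berg_fct_limit_left} with $m=0$ and $m=1$ shows that $(1-t^2)^{(k-1)/2}g_k$ and $(1-t^2)^{(k-1)/2}g_k'$ tend to $0$ at $-1$, since the exponent $\tfrac{k-1}{2}$ exceeds $\tfrac{k-3}{2}$ and $\tfrac{k-1}{2}$ respectively. Therefore $\psi_k(-1^+)=0$. For $k=2,3$ the case $m=0$ is excluded from the lemma, but there $g_k$ is continuous at $-1$ and we still have the $m=1$ limit, so again $\psi_k(-1^+)=0$. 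This endpoint bookkeeping, especially the excluded cases $(2,0)$ and $(3,0)$, is the main point requiring care; the explicit formulas for $g_2,g_3$ can serve as a cross-check.

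\emph{Step 3: concavity.} From the ODE, $g_k''=-\tfrac{k-1}{1-t^2}\bigl(\varphi_k+t/\kappa_k\bigr)$, so it remains to show $\varphi_k(t)>-t/\kappa_k$. For $t\leq 0$ this is immediate from $\varphi_k>0\geq -t/\kappa_k$. For $t>0$, define $\chi_k=(1-t^2)^{(k-1)/2}\bigl(\varphi_k+t/\kappa_k\bigr)$. Using $\psi_k'$ one computes
\begin{equation*}
    \chi_k'(t)=\frac{1}{\kappa_k}(1-t^2)^{\frac{k-3}{2}}\bigl((k-1)t^2+1-t^2-(k-1)t^2\bigr)=\frac{1}{\kappa_k}(1-t^2)^{\frac{k-3}{2}}>0.
\end{equation*}
Combined with $\chi_k(-1^+)=\psi_k(-1^+)=0$, this gives $\chi_k>0$ on all of $(-1,1)$, so $g_k''<0$ everywhere. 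This argument subsumes the first inequality for $t\leq 0$, and it uses only the left-endpoint asymptotics. Strict concavity then follows from $g_k''<0$.
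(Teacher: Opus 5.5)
Your proof is correct. Steps~1--2 are the paper's argument: it proves the same identity $(1-t^2)^{\frac{k-1}{2}}(g_k-tg_k')=\frac{k-1}{\kappa_k}\int_{-1}^t s^2(1-s^2)^{\frac{k-3}{2}}\,ds$ by the same derivative computation and the same vanishing at $-1$. Your use of the continuity of $g_2,g_3$ at $-1$ to cover the excluded cases $(2,0),(3,0)$ settles a detail the paper passes over.

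The two proofs differ at concavity. The paper keeps working with this one identity. For $t<0$ it compares integrands, $s^2\geq(-s)(-t)$ on $(-1,t)$, to get $g_k-tg_k'\geq -t/\kappa_k$, hence $g_k-tg_k'>\max\{0,-t\}/\kappa_k$. It then reads $g_k''<0$ off the ODE. You instead integrate a second exact identity. Since $\chi_k=-\frac{1}{k-1}(1-t^2)^{\frac{k+1}{2}}g_k''$, your computation amounts to the closed formula
\[
-(1-t^2)^{\frac{k+1}{2}}g_k''(t)=\frac{k-1}{\kappa_k}\int_{-1}^t(1-s^2)^{\frac{k-1}{2}}\,ds .
\]
This gives $g_k''<0$ on all of $(-1,1)$ at once, with no case distinction and with an explicit expression. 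Letting $t\to1$ even recovers the constant $-\frac{k-1}{\kappa_{k-1}}$ of \cref{eq:Berg_fct_limit_right} for $m=2$. The paper's comparison needs only the single identity; yours costs one more derivative computation but yields an equality rather than a bound.

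There are two slips in the write-up, and neither affects validity.
\begin{itemize}
\item The bracket in your formula for $\chi_k'$ equals $1-t^2$. So $\chi_k'=\frac{1}{\kappa_k}(1-t^2)^{\frac{k-1}{2}}$, not $\frac{1}{\kappa_k}(1-t^2)^{\frac{k-3}{2}}$; positivity is unaffected.
\item Your claim that the case $t\leq 0$ is immediate has the sign backwards. For $t<0$ one has $-t/\kappa_k>0$, so $\varphi_k>0$ does not give $\varphi_k>-t/\kappa_k$. The trivial range is $t\geq 0$, and $t<0$ is exactly the case needing an argument, which the paper supplies by its comparison.
\end{itemize}
In fact you apply the $\chi_k$ argument on the whole interval, as you must, since you use $\chi_k(-1^+)$. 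So $\chi_k$ should be defined on $(-1,1)$ rather than only for $t>0$, and the erroneous case split can simply be deleted.
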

    \begin{proof}
        First, we fix $k\geq 2$ and claim that for all $t\in (-1,1)$
        \begin{equation}\label{eq:Berg_fct_integral_identity}
            (1-t^2)^{\frac{k-1}{2}}(g_k(t) - t g_k'(t))
            = \frac{k-1}{\kappa_k} \int_{-1}^t s^2(1-s^2)^{\frac{k-3}{2}}\, ds.
        \end{equation}
        Taking a derivative of the left hand side and then employing the differential equation~\cref{eq:Berg_fct_ODE} gives
        \begin{align*}
            &\frac{d}{dt}\left[ (1-t^2)^{\frac{k-1}{2}}(g_k(t) - t g_k'(t)) \right] \\
            &\qquad = - t(1-t^2)^{\frac{k-3}{2}}\left( (1-t^2)g_k''(t) +(k-1)(g_k(t)- tg_k'(t)) \right)
             = \frac{k-1}{\kappa_k} t^2(1-t^2)^{\frac{k-3}{2}}.
        \end{align*}
        Hence, the left and right hand side of \cref{eq:Berg_fct_integral_identity} can only differ by a constant. By~\cref{eq:Berg_fct_limit_left},
        \begin{equation*}
            \lim_{t\to \pm 1} (1-t^2)^{\frac{k-1}{2}}g_k(t)
            = 0
            \qquad\text{and}\qquad
            \lim_{t\to \pm 1} (1-t^2)^{\frac{k-1}{2}}g_k'(t)
            = 0,
        \end{equation*}
        hence the left hand side of \cref{eq:Berg_fct_integral_identity} tends to zero as $t\to -1$. This proves the integral identity~\cref{eq:Berg_fct_integral_identity}, and as a direct consequence, $g_k(t) - tg_k'(t)>0$.
        Moreover, if $t\in (-1,0)$,
        \begin{align*}
            g_k(t) - tg_k'(t)
            &= \frac{k-1}{\kappa_k} (1-t^2)^{- \frac{k-1}{2}}\int_{-1}^t s^2(1-s^2)^{\frac{k-3}{2}}\, ds \\
            & \geq  \frac{k-1}{\kappa_k} (-t)(1-t^2)^{- \frac{k-1}{2}}\int_{-1}^t (-s)(1-s^2)^{\frac{k-3}{2}}\, ds
            = - \frac{1}{\kappa_k} t,
        \end{align*}
        and thus, 
        $g_k(t)- tg_k'(t) > \frac{1}{\kappa_k} \max\{0,-t\}$
        for all $t\in (-1,1)$. Combining this inequality again with the differential equation~\cref{eq:Berg_fct_ODE} shows that
        \begin{align*}
            - \frac{1}{k-1}(1-t^2)g_k''(t)
            = (g_k(t)- tg_k'(t)) + \frac{1}{\kappa_k}t 
            > \frac{1}{\kappa_k} (\max\{0,-t\} + t)
            \geq 0.
        \end{align*}
        Therefore, $g_k''(t)<0$ for all $t\in (-1,1)$ and the function $g_k$ is strictly concave.
    \end{proof}

    Given a function $g\in C^2(-1,1)$, we write
    $g^v(u)=g(\pair{u}{v})$. 
    Then the second order derivatives of the one-homogeneous extension of $g^v$ can be obtained by a straightforward computation and are given by the following lemma.

    \begin{lem}[{\cite{OrtegaMoreno2021}}]\label{{lem:Hessian_zonal}}
        Let $g\in C^2(-1,1)$ and $v\in\S^{n-1}$. Then $g^{v} \in C^2(\S^{n-1}\setminus\{\pm v\})$ and for all $u\in \S^{n-1}\setminus\{\pm v\}$,
        \begin{align}\label{eq:Hessian_zonal}
            D^2g^{v}(u)
            = g''(\pair{u}{v})(P_{u^\perp}v \otimes P_{u^\perp}v) + (g(\pair{u}{v}) - \pair{u}{v}g'(\pair{u}{v}))P_{u^\perp}.
        \end{align}
    \end{lem}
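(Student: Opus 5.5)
We compute the Hessian of the one-homogeneous extension of $g^v$ directly. Write $\tilde g^v(x) = |x|\, g(\pair{x}{v}/|x|)$ for $x\in\R^n\setminus\{0\}$. This restricts to $g^v$ on the sphere and is one-homogeneous. Off the two rays through $\pm v$, the argument $\pair{x}{v}/|x|$ lies in $(-1,1)$, so $\tilde g^v$ is $C^2$ there because $g\in C^2(-1,1)$; this gives the regularity claim. Since $D^2\tilde g^v(u)u=0$ by homogeneity, it suffices to compute the bilinear form $D^2\tilde g^v(u)[w,w]$ for $u\in\S^{n-1}\setminus\{\pm v\}$ and $w\in u^\perp$.

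For this we use the chain rule with $s(x)=\pair{x}{v}/|x|$. At a point $x=u$ on the sphere:
\begin{itemize}
\item $\nabla|x| = u$ and $D^2|x| = P_{u^\perp}$;
\item $\nabla s(u) = v - \pair{u}{v}u = P_{u^\perp}v$;
\item restricted to $u^\perp$, $D^2 s(u)[w,w] = -\pair{u}{v}|w|^2$, which follows from expanding $s(u+\varepsilon w)$ to second order.
\end{itemize}

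The product rule for $|x|\,g(s(x))$ gives
\begin{equation*}
D^2\tilde g^v = g(s)\,D^2|x| + \nabla|x|\otimes g'(s)\nabla s + g'(s)\nabla s\otimes \nabla|x| + |x|\big(g''(s)\nabla s\otimes\nabla s + g'(s)D^2 s\big).
\end{equation*}
We evaluate this at $u$ and take $w\in u^\perp$. The cross terms vanish because $\pair{\nabla|x|}{w}=\pair{u}{w}=0$. What remains is
\begin{equation*}
g(\pair{u}{v})|w|^2 + g''(\pair{u}{v})\pair{P_{u^\perp}v}{w}^2 - \pair{u}{v}\,g'(\pair{u}{v})|w|^2 .
\end{equation*}
This is exactly the quadratic form of \cref{eq:Hessian_zonal} on $u^\perp$.

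Both sides of \cref{eq:Hessian_zonal} annihilate $u$, so they agree as endomorphisms of $u^\perp$. Polarization then gives the full identity.

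The only step needing care is the second derivative of $s$. Expanding $|u+\varepsilon w|^{-1}=1-\tfrac12\varepsilon^2|w|^2+O(\varepsilon^3)$ gives
\begin{equation*}
s(u+\varepsilon w)=\pair{u}{v}+\varepsilon\pair{w}{v}-\tfrac12\varepsilon^2|w|^2\pair{u}{v}+O(\varepsilon^3),
\end{equation*}
so $D^2s(u)[w,w]=-|w|^2\pair{u}{v}$. This matches the coefficient claimed above; the remaining computations are routine.
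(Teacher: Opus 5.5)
Your proposal is correct and follows the same route as the paper, which gives no written proof: it cites \cite{OrtegaMoreno2021} and describes the formula as a straightforward computation of the Hessian of the one-homogeneous extension, which is exactly your chain-rule computation for $|x|\,g(\pair{x}{v}/|x|)$. One cosmetic point about the order of your final step: the agreement on $u^\perp$ comes from polarizing your quadratic-form identity, and the fact that both sides annihilate $u$ is what then extends the identity to all of $\R^n$.
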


    Since $g_k$ (for $k\geq 3$) has a singularity at $t=1$, the function $g_k^v$ and all its higher order derivatives are singular in $u=v$. Hence, interchanging differentiation and integration is a delicate issue that requires the theory developed in \cite{Brauner2023}*{Section~3}. We summarize the relevant information in the following lemma.

    \begin{lem}\label{lem:Berg_fct_convol_trafo}
        Let $2\leq k\leq n-1$. Then $g_k^v \in W^{2,1}(\S^{n-1})$ for every $v\in\S^{n-1}$. Moreover, whenever $f\in C(\S^{n-1})$, then $ f \ast g_k \in C^2(\S^{n-1})$ and
        \begin{equation}\label{eq:Berg_fct_convol_trafo}
            D^2[f\ast g_k](u)
            = \int_{\S^{n-1}} f(v) D^2g_k^v(u) \, dv,\qquad u\in\S^{n-1}.
        \end{equation}
    \end{lem}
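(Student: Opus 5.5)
The lemma makes three claims: $g_k^v\in W^{2,1}(\S^{n-1})$, the convolution $f\ast g_k$ is $C^2$, and its Hessian is the integral \cref{eq:Berg_fct_convol_trafo}. I would handle them in that order.

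\emph{Step 1: integrability of $D^2 g_k^v$.} By rotation invariance it suffices to fix $v$ and work in spherical cylinder coordinates around $v$, with $t=\pair{u}{v}$. Away from $u=\pm v$ the function $g_k^v$ is smooth, and \cref{lem:Hessian_zonal} gives its Hessian explicitly. Using $|P_{u^\perp}v|^2=1-t^2$, the operator norm is bounded by
\begin{equation*}
    |g_k''(t)|(1-t^2)+|g_k(t)-tg_k'(t)|.
\end{equation*}
The limits \cref{eq:Berg_fct_limit_left,eq:Berg_fct_limit_right}, applied with $m=0,1,2$, give near $t=\pm1$ the growth bounds
\begin{itemize}
    \item $|g_k^{(m)}(t)|\lesssim (1-t^2)^{-\frac{k-3}{2}-m}$, with logarithmic corrections in the borderline cases $(k,m)\in\{(2,0),(3,0)\}$, where one checks the explicit formulas directly.
\end{itemize}
Hence the operator norm above is $O\big((1-t^2)^{-\frac{k-1}{2}}\big)$. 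The surface measure contributes the weight $(1-t^2)^{\frac{n-3}{2}}\,dt$, so the integrand behaves like $(1-t^2)^{\frac{n-k-2}{2}}$. This is integrable on $(-1,1)$ precisely because $k\le n-1$.

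The same estimates, with one power less of $(1-t^2)^{-1}$, make $g_k^v$ and its first derivatives integrable. To conclude $g_k^v\in W^{2,1}$, the classical derivatives must also be the weak derivatives. This is the step I expect to be the main obstacle. I would verify it by integrating by parts against a test function on the complement of small caps of radius $\varepsilon$ around $\pm v$. The boundary terms are of size
\begin{equation*}
    \varepsilon^{n-2}\,|\nabla g_k^v| \sim \varepsilon^{n-2}\,\varepsilon^{-(k-2)}\sqrt{\cdot}\,,
\end{equation*}
since $|\nabla g_k^v|\approx |g_k'|\sqrt{1-t^2}$ and $1-t^2\approx\varepsilon^2$. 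This is $O(\varepsilon^{n-k})\to0$, and analogous bounds hold for the function values themselves. Alternatively, I would simply invoke the general framework of \cite{Brauner2023}*{Section~3} for zonal kernels in $W^{2,1}$.

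\emph{Step 2: regularity of $f\ast g_k$ and the Hessian formula.} Write $f\ast g_k(u)=\int f(v)g_k(\pair{u}{v})\,dv$ as $\int f(\vartheta v_0)\,g_k^{v_0}(\vartheta^{-1}u)\,d\vartheta$ over $\mathrm{SO}(n)$; equivalently, view it as the convolution of $f$ with the zonal $L^1$ kernel. Derivatives can then be moved onto the kernel in the distributional sense. For any $u$-directional second derivative one obtains
\begin{equation*}
    D^2[f\ast g_k](u)=\int f(v)\,D^2g_k^v(u)\,dv
\end{equation*}
as distributions, with the kernel $D^2g_k^v$ in $L^1$ by Step 1.

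To upgrade this to a classical $C^2$ statement, I would show that the right-hand side is continuous in $u$. By rotation, $\int f(v)D^2g_k^v(u)\,dv$ equals a rotated copy of $\int f(\rho_u v)\,D^2 g_k^{v}(e)\,dv$, where $e$ is a fixed base point and $\rho_u$ is a rotation taking $e$ to $u$, chosen continuously on small patches. Continuity then follows from uniform continuity of $f$, the $L^1$ bound on the kernel, and dominated convergence. A distribution whose second derivatives are continuous functions is $C^2$, and the classical Hessian coincides with the integral, which is \cref{eq:Berg_fct_convol_trafo}.

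The only delicate analytic input is the endpoint asymptotics at $t=1$, which the cited lemma from \cite{Brauner2024} supplies. Everything else is standard convolution calculus on the sphere.
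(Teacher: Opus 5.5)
Your proposal is correct. Its analytic core is the same as the paper's. Both arguments rest on the endpoint asymptotics of $g_k^{(m)}$ and on the observation that $(1-t^2)^{\frac{n-1}{2}}|g_k''(t)|\lesssim (1-t^2)^{\frac{n-k-2}{2}}$, which is integrable because $k\leq n-1$.

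The difference lies in the functional-analytic part. The paper invokes Proposition~3.7 and Theorem~3.15 of \cite{Brauner2023}. These reduce the membership $g_k^v\in W^{2,1}(\S^{n-1})$, the $C^2$ regularity of $f\ast g_k$, and the differentiation under the integral sign to two weighted integrability conditions, on $g_k$ and $g_k''$ only. You instead prove these facts by hand:
\begin{itemize}
\item You bound $\lVert D^2g_k^v(u)\rVert$ through the explicit zonal Hessian. This also uses the $m=1$ asymptotics to control $g_k-tg_k'$.
\item You identify classical with weak derivatives by excising caps around $\pm v$. The boundary terms vanish as $\varepsilon\to 0$, again because $k\leq n-1$.
\item You move derivatives onto the kernel via Fubini and the $v$-uniform $L^1$ bound.
\item You upgrade the distributional identity to a classical one via the rotation trick and uniform continuity of $f$.
\end{itemize}

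What each route buys: the paper's route is much shorter and places the lemma in a general framework for zonal kernels. Yours is self-contained and shows exactly where the hypothesis $k\leq n-1$ enters.

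Two small remarks. First, the case $(k,m)=(2,0)$ is not a logarithmic correction. Since $g_2(1)=-\frac{1}{4\pi}\neq 0$, the bound $(1-t^2)^{1/2}$ is simply false. However, $g_2$ is bounded on $[-1,1]$, which is all you need. Second, the $C^2$ upgrade is cleanest if you work with the one-homogeneous extension on $\R^n\setminus\{0\}$. There, $D^2$ is the full Euclidean Hessian, and its continuity immediately gives continuity of all second partials.
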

    \begin{proof}
        Since $g_k\in C^{\infty}(-1,1)$, combining \cite{Brauner2023}*{Proposition~3.7} and \cite{Brauner2023}*{Theorem~3.15} gives that is sufficient to prove
        \begin{equation*}
            \int_{-1}^1 (1-t^2)^{\frac{n-3}{2}}|g_k(t)| \, dt < \infty
            \qquad\text{and}\qquad
            \int_{-1}^1  (1-t^2)^{\frac{n-1}{2}}|g_k''(t)|\, dt < \infty.
        \end{equation*}
        The finiteness of the first integral has already been discussed and follows from \cref{eq:Berg_fct_limit}.
        For the finiteness of the second integral, note that by $\cref{eq:Berg_fct_limit_left}$ and $\cref{eq:Berg_fct_limit_right}$,
        \begin{align*}
            \lim_{t\to -1} (1-t^2)^{\frac{k+1}{2}}g_k''(t) = 0 \qquad\text{and}\qquad
            \lim_{t\to +1} (1-t^2)^{\frac{k+1}{2}}g_k''(t) = -\frac{k-1}{\kappa_{k-1}}.
        \end{align*}
        Since $k\leq n-1$, it follows that $(1-t^2)^{\frac{n-1}{2}}|g_k(t)|\leq A_k (1-t^2)^{-\frac{1}{2}} $ for some constant $A_k\geq 0$. To conclude the proof, note that the function $(1-t^2)^{-\frac{1}{2}}$ is integrable.
    \end{proof}

    \subsection{Proof of \texorpdfstring{\cref{thmintro:B-M_mean_sec}}{Theorem~D}}

    We can now collect our findings about Berg's functions to show that the convolution product $f\ast g_k$ meets the requirements of~\cref{thmintro:Brunn-Minkowski} whenever $f\in C(\S^{n-1})$ is nonnegative and orthogonal to all linear functions. This is the content of the following key proposition.
    
     \begin{prop}\label{prop:Berg_fct_convol_balanced}
        Let $2\leq k \leq n-1$ and $f\in C(\S^{n-1})$ be nonnegative and orthogonal to all linear functions. Then $\zeta = f \ast g_k$ is a $ C^2(\S^{n-1})$~function and for all $u\in\S^{n-1}$,
        \begin{equation}\label{eq:Berg_fct_convol_balanced}
            \tr D^2\zeta(u)
            \geq (n-k)\lambda_{\max}(D^2\zeta(u)).
        \end{equation}
        In particular, if $2\leq k < n-1$, then $D^2\zeta(u)$ is spectrally $(k-1)$-balanced.
    \end{prop}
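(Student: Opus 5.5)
The plan is to compute $D^2\zeta(u)$ explicitly by differentiating under the integral, to bound $\lambda_{\max}$ and $\tr$ of the resulting matrix separately, and then to compare the two using Berg's differential equation. Regularity comes first. By \cref{lem:Berg_fct_convol_trafo}, $\zeta=f\ast g_k\in C^2(\S^{n-1})$ and $D^2\zeta(u)=\int_{\S^{n-1}} f(v)\,D^2g_k^v(u)\,dv$. The integrand is given by \cref{eq:Hessian_zonal} for $v\neq\pm u$, which is a null set of $v$. Writing $t=\pair{u}{v}$ and using $\lvert P_{u^\perp}v\rvert^2=1-t^2$, we get
\begin{equation*}
D^2\zeta(u)=\int_{\S^{n-1}} f(v)\,g_k''(t)\,P_{u^\perp}v\otimes P_{u^\perp}v\,dv+\bigg(\int_{\S^{n-1}} f(v)\,\big(g_k(t)-tg_k'(t)\big)\,dv\bigg)P_{u^\perp}.
\end{equation*}

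Next I bound $\lambda_{\max}$. Since $f\ge 0$ and $g_k''<0$ by \cref{lem:Berg_fct_concave}, the first term is negative semidefinite. The second term is a multiple $\gamma(u)P_{u^\perp}$ of the identity on $u^\perp$, where
\begin{equation*}
\gamma(u)=\int_{\S^{n-1}} f(v)\big(g_k(t)-tg_k'(t)\big)\,dv\geq 0,
\end{equation*}
again by \cref{lem:Berg_fct_concave} and $f\ge0$. Hence $\lambda_{\max}(D^2\zeta(u))\le\gamma(u)$.

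Then I compute the trace on the $(n-1)$-dimensional space $u^\perp$:
\begin{equation*}
\tr D^2\zeta(u)=\int_{\S^{n-1}} f(v)\Big[(1-t^2)g_k''(t)+(n-1)\big(g_k(t)-tg_k'(t)\big)\Big]\,dv.
\end{equation*}
Split $n-1=(k-1)+(n-k)$. By the ODE \cref{eq:Berg_fct_ODE}, $(1-t^2)g_k''+(k-1)(g_k-tg_k')=-\frac{k-1}{\kappa_k}t$. Integrated against $f$, this part gives $-\frac{k-1}{\kappa_k}\int f(v)\pair{u}{v}\,dv=0$, because $f$ is orthogonal to linear functions. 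Therefore $\tr D^2\zeta(u)=(n-k)\gamma(u)\ge(n-k)\lambda_{\max}(D^2\zeta(u))$, which is \cref{eq:Berg_fct_convol_balanced}.

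For the ``in particular'' part, let $\lambda_1\le\dots\le\lambda_{n-1}$ be the eigenvalues of $D^2\zeta(u)$ on $u^\perp$. The $n-1-k$ largest eigenvalues are each at most $\lambda_{\max}$, so
\begin{equation*}
\lambda_1+\dots+\lambda_k\ \ge\ \tr D^2\zeta(u)-(n-1-k)\lambda_{\max}\ \ge\ \lambda_{\max}.
\end{equation*}
This is spectral $(k-1)$-balance in dimension $n-1$, and it is meaningful for $k<n-1$.

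The main obstacle is justifying the interchange of differentiation and integration despite the singularity of $g_k$ at $t=1$. That is already settled by \cref{lem:Berg_fct_convol_trafo}. What remains is to check that the pointwise Hessian formula \cref{eq:Hessian_zonal} may be used inside the integral, which holds since the exceptional set $v=\pm u$ is null, and that splitting the integral into the two terms is legitimate, i.e. each term is separately integrable. For the latter I would appeal to the endpoint asymptotics \cref{eq:Berg_fct_limit_left,eq:Berg_fct_limit_right}, which give integrability against $(1-t^2)^{\frac{n-3}{2}}$ when $k\le n-1$.
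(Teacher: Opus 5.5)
Your proposal is correct and follows essentially the same route as the paper. Both arguments insert \cref{eq:Hessian_zonal} under the integral and use $g_k''<0$ to bound $\lambda_{\max}(D^2\zeta(u))$ by $\int f(v)\,(g_k-tg_k')\,dv$; both then use \cref{eq:Berg_fct_ODE} together with the orthogonality of $f$ to linear functions to identify $\tr D^2\zeta(u)$ as $(n-k)$ times that same integral. Your decomposition $D^2\zeta(u)=(\text{negative semidefinite})+\gamma(u)P_{u^\perp}$ is only a repackaging of the paper's pointwise bound $\pair{w}{D^2g_k^v(u)w}\le\lambda_{\max}(D^2g_k^v(u))=g_k(\pair{u}{v})-\pair{u}{v}g_k'(\pair{u}{v})$, and your explicit check that the two pieces are separately integrable spells out a point the paper leaves implicit.
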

    \begin{proof}
        First, we take a direction $v\in\S^{n-1}$ and examine the eigenvalues of the Hessian $D^2g_k^v(u)\in\Sym^2(\R^n)$ at a given point $u\in\S^{n-1}\setminus \{\pm v\}$. By one-homogeneous extension of $g_k^v$, it has a zero eigenvalue in radial direction. Denoting
        \begin{equation*}
            a_k(t) = g_k(t) - t g_k'(t)
            \qquad\text{and}\qquad
            b_k(t) = (1-t^2)g_k''(t) + g_k(t) - t g_k'(t),
        \end{equation*}
        formula~\cref{eq:Hessian_zonal} shows that on $u^\perp$, the matrix $D^2g_k^v(u)$ has the eigenvalue $a_k(\pair{u}{v})$ with multiplicity~$n-2$, and $b_k(\pair{u}{v})$ with multiplicity one. \cref{lem:Berg_fct_concave} shows that $a_k(t)>0$ and $a_k(t)>b_k(t)$ for all $t\in (-1,1)
        $. Hence, for $v\in\S^{n-1}$ and $u\in\S^{n-1}\setminus\{\pm v\}$, the maximal eigenvalue of $D^2 g_k^v(u)$ is given by
        $\lambda_{\max}(D^2g_k^{v}(u))=a_k(\pair{u}{v})$. Employing the differential equation~\cref{eq:Berg_fct_ODE}, we have
        \begin{align*}
             \tr D^2g_k^v(u)
             = (n-2)a_k(\pair{u}{v}) + b_k(\pair{u}{v})
             = (n-k)\lambda_{\max}(D^2g_k^v(u)) - \frac{k-1}{\kappa_k}\pair{u}{v}.
        \end{align*}
        
        Next, take $f\in C(\S^{n-1})$ to be nonnegative and orthogonal to all linear functions. According to~\cref{lem:Berg_fct_convol_trafo}, the convolution product $\zeta=f\ast g_k$ is a $C^2(\S^{n-1})$ function and the Hessian of its one-homogeneous extension is given by~\cref{eq:Berg_fct_convol_trafo}. Then for every point $u\in\S^{n-1}$ and unit vector $w\in u^\perp$, we have
        \begin{align*}
            &\tr D^2\zeta(u)
            = \int_{\S^{n-1}} f(v) \tr D^2g_k^v(u)\, dv 
            = (n-k) \int_{\S^{n-1}} f(v) \lambda_{\max}(D^2g_k^v(u)) \, dv \\
            &\qquad \geq (n-k) \int_{\S^{n-1}} f(v) \pair{w}{ D^2g_k^v(u) w} \, dv 
            = (n-k) \pair{w}{ D^2\zeta(u) w }.
        \end{align*}
        Choosing $w$ to be an eigenvector corresponding to the largest eigenvalue of $D^2\zeta(u)$, we obtain~\cref{eq:Berg_fct_convol_balanced}.
        To show that $D^2\zeta(u)\in\Sym^2(u^\perp)$ is a spectrally $(k-1)$-balanced matrix, take a subset
        $I\subseteq[n-1]$ of size $|I|=k$ and note that
        \begin{align*}
            \sum_{i\in I} \lambda_i(D^2\zeta(u))
            &= \tr(D^2\zeta(u)) - \sum_{i\in [n-1]\setminus I} \lambda_i(D^2\zeta(u)) \\
            &\geq (n-k)\lambda_{\max}(D^2\zeta(u)) - (n-k-1)\lambda_{\max}(D^2\zeta(u)) \\
            &= \lambda_{\max}(D^2\zeta(u)).
        \end{align*}
        This concludes the argument.        
    \end{proof}

    Now we have almost all of the ingredients prepared to prove~\cref{thmintro:B-M_mean_sec}. We also require a generalized Minkowski and generalized Brunn--Minkowski inequality.
    For all $1\leq i\leq n-1$ and $C,M\in\K^n$,
    \begin{equation}\label{eq:aux_Minkowski_type_ineq}
        \V(C,M^{[i-1]},{\BB}^{[n-i]})^{i}
        \geq \V(C^{[i]},{\BB}^{[n-i]}) \V(M^{[i]},{\BB}^{[n-i]})^{i-1}.
    \end{equation}
    If $C$ and $M$ are full-dimensional, equality holds if and only if they are homothetic.
    Moreover, for all $2\leq j\leq n-1$, bodies $K,L\in\K^n$, and $\lambda\in (0,1)$,
    \begin{equation}\label{eq:aux_Brunn-Minkowski_type_ineq}
        V_j((1-\lambda)K+\lambda L)
        \geq V_j(K)^{1-\lambda} V_j(L)^{\lambda}.
    \end{equation}
    If $K$ and $L$ are full-dimensional, equality holds if and only if they are translates.
    These geometric inequalities, along with their equality conditions, are classical (see, e.g., \cite{Schneider2014}*{Section~7.6}) and follow from the classical Alexandrov--Fenchel inequality in the same way as~\cref{eq:Minkowski-ineq_zeta} and~\cref{eq:B-M-ineq_zeta} follow from~\cref{eq:A-F-ineq_zeta}.
    
    \begin{thm}\label{thm:B-M_mean_sec}
        Let $1\leq i\leq n$ and $2\leq k < n-1$. Then for all $K,L\in\K^n$ and $\lambda\in (0,1)$,
        \begin{equation}\label{eq:B-M_mean_sec}
            V_i(\MSO_k((1-\lambda)K+\lambda L))
            \geq V_i(\MSO_k K)^{1-\lambda} V_i(\MSO_k L)^{\lambda}.
        \end{equation}
        If $K$ and $L$ are full-dimensional, equality holds if and only if they are translates.
    \end{thm}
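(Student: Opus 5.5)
The plan is to follow Lutwak's technique and reduce \cref{eq:B-M_mean_sec} to \cref{thm:B-M-ineq_zeta} via \cref{lem:Lutwaks_trick}. Set $j=n-k+1$. Since $2\leq k<n-1$, we have $2<j\leq n-1$ and $n-j=k-1$.

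\emph{Setup.} Write $K_\lambda=(1-\lambda)K+\lambda L$. By continuity of $\MSO_k$ and of mixed volumes, I would first assume $K,L$ full-dimensional, so that the Goodey--Weil representation \cref{eq:MSO_integral_rep} applies to $K$, $L$ and $K_\lambda$. The degenerate cases, including the case where some $\MSO_k$-body has vanishing $V_i$, should then follow by a limiting argument or trivially. Let $M_\varepsilon$ be bodies of class $C^2_+$ converging to $\MSO_k K_\lambda$. With $V_i$ identified, up to a constant, with $\V(\,\cdot\,^{[i]},\BB^{[n-i]})$, \cref{lem:Lutwaks_trick} gives, for $X\in\{K,L,K_\lambda\}$,
\[
\V(\MSO_k X, M_\varepsilon^{[i-1]},\BB^{[n-i]})=\V(X^{[j]},\zeta_\varepsilon,\BB^{[n-j-1]}),\qquad \zeta_\varepsilon=c_{n,k}\,S_{i-1}(M_\varepsilon,\cdot)\ast g_k .
\]
The density of $S_{i-1}(M_\varepsilon,\cdot)$ is continuous, nonnegative, and orthogonal to linear functions. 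Hence \cref{prop:Berg_fct_convol_balanced} shows that $\zeta_\varepsilon\in C^2(\S^{n-1})$ and that $D^2\zeta_\varepsilon(u)$ is spectrally $(k-1)$-balanced, that is, spectrally $(n-j)$-balanced, at every $u$. For $i=1$ the measure $S_0$ is uniform, so $\zeta_\varepsilon$ is a positive constant; this case is covered as well.

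\emph{Chain of inequalities.} Applying \cref{thm:B-M-ineq_zeta} to $\zeta_\varepsilon$ and then \cref{eq:aux_Minkowski_type_ineq} with $C=\MSO_k X$ and $M=M_\varepsilon$ gives
\[
\V(\MSO_k K_\lambda,M_\varepsilon^{[i-1]},\BB^{[n-i]})\geq \V(\MSO_kK,M_\varepsilon^{[i-1]},\BB^{[n-i]})^{1-\lambda}\,\V(\MSO_kL,M_\varepsilon^{[i-1]},\BB^{[n-i]})^{\lambda}
\geq V_i(\MSO_kK)^{\frac{1-\lambda}{i}}V_i(\MSO_kL)^{\frac{\lambda}{i}}V_i(M_\varepsilon)^{\frac{i-1}{i}},
\]
up to normalizing constants. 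For $i=n$ I would use the classical Minkowski inequality $\V(C,M^{[n-1]})^n\geq \V(C)\V(M)^{n-1}$, which holds equally well. Letting $\varepsilon\to0$, the left side tends to $V_i(\MSO_kK_\lambda)$ and $V_i(M_\varepsilon)\to V_i(\MSO_kK_\lambda)$. Dividing by $V_i(\MSO_kK_\lambda)^{(i-1)/i}$ and raising to the power $i$ then yields \cref{eq:B-M_mean_sec}.

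\emph{Equality cases.} If $K$ and $L$ are translates, then $\MSO_k$ of all three bodies are translates, by translation covariance of $\MSO_k$, and equality is clear. The converse is the main obstacle: the approximation $M_\varepsilon$ destroys equality information. I would first run the argument with $M=\MSO_kK_\lambda$ directly, after checking that it is smooth enough, or else use a weak form of \cref{lem:Lutwaks_trick} with measures, which only requires $S_{i-1}(M,\cdot)$.

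With that in place, equality forces equality in \cref{thm:B-M-ineq_zeta} for $\zeta$. Tracing back through \cref{thm:B-M-ineq_zeta:additive} and \cref{thm:Minkowsi-ineq_zeta}, this forces equality in the Alexandrov--Fenchel type inequality \cref{eq:A-F-ineq_zeta} for the pair $K,L$, together with $\V(K^{[j]},\zeta,\BB^{[n-j-1]})=\V(L^{[j]},\zeta,\BB^{[n-j-1]})$. I would then apply \cref{lem:hyperbolic_equal_case} to the quadratic form from the proof of \cref{thm:A-F-ineq_zeta}: $h_K-ah_L$ lies in its radical. The aim is to show that this radical consists only of linear functions, by adapting the nondegeneracy argument of \cref{thm:mixed_discr_strictly} pointwise together with the simplicity of the top eigenvalue in \cref{prop:differential_operator}. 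This would give that $K$ and $L$ are homothetic, and the equal $\zeta$-functionals then force them to be translates. I expect this step to be the hardest, because $\zeta$ is only spectrally balanced rather than strictly so, and the radical must be controlled through a limiting argument.
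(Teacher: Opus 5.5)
Your proof of the inequality is essentially the paper's: \cref{lem:Lutwaks_trick}, \cref{prop:Berg_fct_convol_balanced} and the Brunn--Minkowski inequality for $\zeta$, followed by \cref{eq:aux_Minkowski_type_ineq}. The only difference is the order of limits. The paper first extends
\[
\V(\MSO_k K_\lambda,M^{[i-1]},\BB^{[n-i]})\geq \V(\MSO_k K,M^{[i-1]},\BB^{[n-i]})^{1-\lambda}\,\V(\MSO_k L,M^{[i-1]},\BB^{[n-i]})^{\lambda}
\]
to every $M\in\K^n$ by continuity, and only then sets $M=\MSO_kK_\lambda$. You instead let $M_\varepsilon\to\MSO_kK_\lambda$ at the end.

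The characterization of equality, however, has a genuine gap. Your route needs the radical of $\V(\cdot,\cdot,\bm M,\zeta,\BB^{[n-j-1]})$ to consist only of linear functions, with $K$ and $L$ themselves among the reference bodies. You leave this open, and nothing in the paper provides it. There are several obstacles:
\begin{enumerate}[label=\upshape(\roman*)]
\item Hyperbolicity in \cref{thm:A-F-ineq_zeta} is proved on $C^\infty(\S^{n-1})$ only for $C^\infty_+$ reference bodies and strictly balanced $D^2\zeta$. It reaches the general case only as an inequality, by approximation, which retains no equality information.
\item The spectral description in \cref{prop:differential_operator}, which is what could identify the radical, requires smooth reference bodies. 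Here $K$ and $L$ are reference bodies, and $h_K,h_L$ need not even be $C^2$.
\item $S_{i-1}(\MSO_kK_\lambda,\cdot)$ need not have a continuous density, so $\zeta$ need not be $C^2$.
\item $D^2\zeta$ is only non-strictly balanced, so the nondegeneracy in \cref{thm:mixed_discr_strictly} is unavailable.
\end{enumerate}
What remains is an equality problem of Alexandrov--Fenchel type with non-smooth reference bodies, a hard problem in its own right.

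The missing idea is that no equality analysis of the $\zeta$-inequality is needed at all. Once the displayed inequality holds for every $M$, your worry that approximation destroys equality information disappears. Take $M=\MSO_kK_\lambda$ exactly; it is full-dimensional when $K$ and $L$ are. Equality in \cref{eq:B-M_mean_sec} then forces equality in both applications of \cref{eq:aux_Minkowski_type_ineq}. Hence $\MSO_kK=\alpha M+x$ and $\MSO_kL=\beta M+y$, and comparing $V_i$ gives $\alpha^{1-\lambda}\beta^{\lambda}=1$.

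By Crofton's formula, $V_1(\MSO_kC)=d_{n,j}V_j(C)$ for all $C\in\K^n$. Therefore $V_j(K)=\alpha V_1(M)/d_{n,j}$, $V_j(L)=\beta V_1(M)/d_{n,j}$ and $V_j(K_\lambda)=V_1(M)/d_{n,j}$. This is equality in the classical inequality \cref{eq:aux_Brunn-Minkowski_type_ineq} with $3\leq j\leq n-1$, so $K$ and $L$ are translates. For $i=1$ the Minkowski step carries no information. In that case, however, the same identity turns \cref{eq:B-M_mean_sec} directly into \cref{eq:aux_Brunn-Minkowski_type_ineq}.
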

    \begin{proof}
        First, let $j=n-k+1$, take a convex body $M\in\K^n$ of class $C^2_+$ and let $\zeta = c_{n,k} S_{i-1}(M,{}\cdot{})\ast g_k$. Then the area measure $S_{i-1}(M,{}\cdot{})$ has a continuous density and \cref{prop:Berg_fct_convol_balanced} implies that $\zeta$ is a $C^2(\S^{n-1})$ function and the matrix $D^2\zeta(u)$ is spectrally $(n-j)$-balanced at every point $u\in\S^{n-1}$. Hence, the Brunn--Minkowski type inequality from~\cref{thmintro:Brunn-Minkowski} yields
        \begin{align*}
            \V(((1-\lambda)K+\lambda L)^{[j]},\zeta,{\BB}^{[n-j-1]})
            \geq  \V(K^{[j]},\zeta,{\BB}^{[n-j-1]})^{1-\lambda} \, V(L^{[j]},\zeta,{\BB}^{[n-j-1]})^{\lambda}.
        \end{align*}
        By the mixed volume identity~\cref{eq:Lutwaks_trick}, we can rewrite this as
        \begin{align*}
            &\V(\MSO_k ((1-\lambda)K+\lambda L), M^{[i-1]}, {\BB}^{[n-i]}) \\
            &\qquad \geq \V(\MSO_k K, M^{[i-1]}, {\BB}^{[n-i]})^{1-\lambda} \, \V(\MSO_k L, M^{[i-1]}, {\BB}^{[n-i]})^{\lambda}.
        \end{align*}
        By continuity of the mixed volume, this holds for any $K,L,M\in\K^n$ and $\lambda\in (0,1)$ (without the assumption of $C^2_+$ regularity).
        By inequality \cref{eq:aux_Minkowski_type_ineq},
        \begin{align}
            \V(\MSO_k K, M^{[i-1]}, {\BB}^{[n-i]})^{i}
            &\geq \V((\MSO_k K)^{[i]}, {\BB}^{[n-i]}) \V(M^{[i]}, {\BB}^{[n-i]})^{i-1}, 
            \label{eq:B-M_mean_sec:proof1}\\
            \V(\MSO_k L, M^{[i-1]}, {\BB}^{[n-i]})^{i}
            &\geq \V((\MSO_k L)^{[i]}, {\BB}^{[n-i]}) \V(M^{[i]}, {\BB}^{[n-i]})^{i-1}.
            \label{eq:B-M_mean_sec:proof2}
        \end{align}
        Combining this with the inequality above, we obtain
        \begin{align*}
            &\V(\MSO_k ((1-\lambda)K+\lambda L), M^{[i-1]}, {\BB}^{[n-i]})^{i} \\
            &\qquad \geq \V((\MSO_k K)^{[i]}, {\BB}^{[n-i]})^{1-\lambda} \, \V((\MSO_k L)^{[i]}, {\BB}^{[n-i]})^{\lambda} \, \V(M^{[i]}, {\BB}^{[n-i]})^{i-1}.
        \end{align*}
        Observe that if $K$ and $L$ are full-dimensional, then $M=\MSO_k((1-\lambda)K+\lambda L)$ is full-dimensional as well. Now we divide both sides of the inequality by $\V(M^{[i]}, {\BB}^{[n-i]})^{i-1}$ and note that $\binom{n}{i}V({{}\cdot{}}^{[i]},{\BB}^{[n-i]})=\kappa_{n-i}V_i$ (see \cite{Schneider2014}*{Equation~(5.31)}). This yields the desired inequality \cref{eq:B-M_mean_sec} if $K$ and $L$ are full-dimensional; the general inequality follows from this by approximation.

        If $K$ and $L$ are translates, we have equality in~\cref{eq:B-M_mean_sec}. Conversely, suppose that $K$ and $L$ are full-dimensional and equality is attained in~\cref{eq:B-M_mean_sec}. Then equality must also hold in \cref{eq:B-M_mean_sec:proof1} and \cref{eq:B-M_mean_sec:proof2}, hence the equality conditions of \cref{eq:aux_Minkowski_type_ineq} entail that 
        \begin{equation*}
            \MSO_k K = \alpha M + x
            \qquad{and}\qquad
            \MSO_k L = \beta M + y
        \end{equation*}
        for some $\alpha,\beta>0$ and $x,y\in\R^n$. Moreover, from the equality in \cref{eq:B-M_mean_sec}, we see that $\alpha^{1-\lambda}\beta^{\lambda}=1$. Next, note that the first intrinsic volume of a convex body $C\in\K^n$ is given by $V_1(C)=\frac{1}{\kappa_{n-1}}\int_{\S^{n-1}}h_C(u)\, du$ (see \cite{Schneider2014}*{Equations~(5.31), (5.57)}). From the definition~\cref{eq:MSO_defi} of the mean section operators and Crofton's intersection formula~\cite{Schneider2014}*{Equation~(4.59)}, it follows that        
        \begin{equation*}
            V_1(\MSO_k C) = d_{n,j} V_j(C)
        \end{equation*}
        for every convex body $C\in\K^n$, where $d_{n,j}>0$ is a constant that can be computed explicitly from \cite{Schneider2014}*{Equation~(4.65)}. Applying this to $K$, $L$, and $(1-\lambda)K+\lambda L$ yields
        \begin{equation*}
            V_j(K) = \frac{\alpha}{d_{n,j}} V_1(M), \quad
            V_j(L) = \frac{\beta}{d_{n,j}}  V_1(M), \quad\text{and}\quad
            V_j((1-\lambda )K+\lambda L) = \frac{1}{d_{n,j}} V_1(M).
        \end{equation*}
        Thus, equality holds in the Brunn--Minkowski type inequality \cref{eq:aux_Brunn-Minkowski_type_ineq} and its equality conditions imply that $K$ and $L$ are translates of each other.
    \end{proof}

    For completeness, we also include the Brunn--Minkowski type inequality for mean section operators in its additive form.

    \begin{thm}\label{thm:B-M_mean_sec:additive}
        Let $1\leq i\leq n$, let $2\leq k < n-1$, and denote $j=n-k+1$. Then for all $K,L\in\K^n$ and $\lambda\in (0,1)$,
        \begin{equation}\label{eq:B-M_mean_sec:additive}
            V_i(\MSO_k((1-\lambda)K+\lambda L))^{\frac{1}{ij}}
            \geq (1-\lambda) V_i(\MSO_k K)^{\frac{1}{ij}} + \lambda V_i(\MSO_k L)^{\frac{1}{ij}}.
        \end{equation}
        If $K$ and $L$ are full-dimensional, equality holds if and only if they are homothetic.
    \end{thm}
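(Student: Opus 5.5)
We derive the additive form from the ingredients used for \cref{thm:B-M_mean_sec}, replacing the multiplicative Brunn--Minkowski type inequality by the additive one, \cref{eq:B-M-ineq_zeta:additive}.

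\textbf{Step 1: the additive inequality for $\MSO_k$.} Let $j=n-k+1$ and take $M\in\K^n$ of class $C^2_+$. Set $\zeta=c_{n,k}S_{i-1}(M,{}\cdot{})\ast g_k$. By \cref{prop:Berg_fct_convol_balanced}, $D^2\zeta(u)$ is spectrally $(n-j)$-balanced for every $u\in\S^{n-1}$. Since $k<n-1$ we have $j>2$, so \cref{thm:B-M-ineq_zeta:additive} applies. Combining it with \cref{lem:Lutwaks_trick} gives
\begin{equation*}
\V(\MSO_k((1-\lambda)K+\lambda L),M^{[i-1]},{\BB}^{[n-i]})^{\frac1j}\geq(1-\lambda)\V(\MSO_kK,M^{[i-1]},{\BB}^{[n-i]})^{\frac1j}+\lambda\V(\MSO_kL,M^{[i-1]},{\BB}^{[n-i]})^{\frac1j}.
\end{equation*}
By continuity this holds for arbitrary $M\in\K^n$.

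\textbf{Step 2: the choice of $M$.} Assume $K,L$ are full-dimensional and put $M=\MSO_k((1-\lambda)K+\lambda L)$, which is then full-dimensional. Write $W=\V(M^{[i]},{\BB}^{[n-i]})$. The left-hand side of Step~1 equals $W^{1/j}$. For each term on the right we use \cref{eq:aux_Minkowski_type_ineq}:
\begin{equation*}
\V(\MSO_kK,M^{[i-1]},{\BB}^{[n-i]})\geq\V((\MSO_kK)^{[i]},{\BB}^{[n-i]})^{\frac1i}\,W^{\frac{i-1}{i}},
\end{equation*}
and likewise for $L$. Inserting these bounds and dividing by $W^{(i-1)/(ij)}$ leaves
\begin{equation*}
W^{\frac{1}{ij}}\geq(1-\lambda)\V((\MSO_kK)^{[i]},{\BB}^{[n-i]})^{\frac{1}{ij}}+\lambda\V((\MSO_kL)^{[i]},{\BB}^{[n-i]})^{\frac{1}{ij}}.
\end{equation*}
Since $\binom{n}{i}\V({}\cdot{}^{[i]},{\BB}^{[n-i]})=\kappa_{n-i}V_i$, the normalizing constant is common to all three terms and cancels, giving \cref{eq:B-M_mean_sec:additive}. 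The case of lower-dimensional $K,L$ follows by approximation. For $i=1$ the inequality \cref{eq:aux_Minkowski_type_ineq} is trivial, so Step~2 is immediate in that case.

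\textbf{Step 3: equality cases.} If $L=\alpha K+x$ with $\alpha>0$, then by homogeneity of degree $n-k+1=j$ and translation covariance of $\MSO_k$, we get $\MSO_k((1-\lambda)K+\lambda L)=(1-\lambda+\lambda\alpha)^j\MSO_kK+y$. Hence $V_i$ of each body is a multiple of $V_i(\MSO_kK)$ by the $ij$-th power of a linear expression in $\alpha$, and equality in \cref{eq:B-M_mean_sec:additive} holds.

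Conversely, suppose equality holds. Then equality holds in both applications of \cref{eq:aux_Minkowski_type_ineq}, so $\MSO_kK=\alpha M+x$ and $\MSO_kL=\beta M+y$ with $\alpha,\beta>0$. Equality also holds in Step~1, which gives $1=(1-\lambda)\alpha^{1/j}+\lambda\beta^{1/j}$.

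Now apply $V_1(\MSO_kC)=d_{n,j}V_j(C)$ to $K$, $L$ and $(1-\lambda)K+\lambda L$, as in the proof of \cref{thm:B-M_mean_sec}. This turns the last identity into equality in the additive Brunn--Minkowski inequality for $V_j$,
\begin{equation*}
V_j((1-\lambda)K+\lambda L)^{1/j}\geq(1-\lambda)V_j(K)^{1/j}+\lambda V_j(L)^{1/j}.
\end{equation*}
This inequality follows classically from the Alexandrov--Fenchel inequality, and for full-dimensional bodies with $2\le j\le n-1$ its equality cases are exactly homothetic pairs. Hence $K$ and $L$ are homothetic.

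\textbf{Main obstacle.} The delicate point is citing the correct equality characterization in Step~3. The argument needs that equality in the additive $V_j$ Brunn--Minkowski inequality forces homothety; this is classical, see \cite{Schneider2014}*{Section~7.6}. Everything else is algebra carried over from the proof of \cref{thm:B-M_mean_sec}.
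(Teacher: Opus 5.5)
Your proof is correct, but it takes a different route from the paper.

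\textbf{The paper's route.} The paper does not rerun the mixed-volume machinery. It observes that $\mathcal{F}(K)=V_i(\MSO_k K)^{1/(ij)}$ is one-homogeneous, since $\MSO_k$ is $j$-homogeneous and $V_i$ is $i$-homogeneous. It is also log-concave by \cref{eq:B-M_mean_sec}. The standard rescaling $\tilde K=K/\mathcal{F}(K)$, $\tilde L=L/\mathcal{F}(L)$, with a reweighted $\tilde\lambda$, then turns log-concavity into \cref{eq:B-M_mean_sec:additive}. For the equality case, equality in \cref{eq:B-M_mean_sec:additive} forces equality in \cref{eq:B-M_mean_sec} for $\tilde K,\tilde L,\tilde\lambda$. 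Hence $\tilde K,\tilde L$ are translates, so $K,L$ are homothetic.

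\textbf{Your route.} You feed the additive $\zeta$-inequality of \cref{thm:B-M-ineq_zeta:additive} through \cref{lem:Lutwaks_trick} and \cref{eq:aux_Minkowski_type_ineq}. You then close the equality case with the classical additive Brunn--Minkowski inequality for $V_j$ and its homothety characterization.

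\textbf{Comparison.} The paper's argument is shorter and purely formal once \cref{thm:B-M_mean_sec} is available; it works for any positive one-homogeneous log-concave functional. Yours is independent of \cref{thm:B-M_mean_sec}. Along the way it proves the additive inequality for $\V(\MSO_k\,\cdot\,,M^{[i-1]},\BB^{[n-i]})$ with arbitrary $M$. It also gets homothety directly instead of via translates of rescaled bodies. The cost is repeating the equality analysis.

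\textbf{One caveat, shared with the paper's proof of \cref{thm:B-M_mean_sec}.} For $i=1$ the inequality \cref{eq:aux_Minkowski_type_ineq} is an identity. So its equality condition gives no information, and you cannot conclude $\MSO_k K=\alpha M+x$ in Step~3. Handle that case directly: $V_1(\MSO_k C)=d_{n,j}V_j(C)$ turns \cref{eq:B-M_mean_sec:additive} with $i=1$ into $d_{n,j}^{1/j}$ times the additive Brunn--Minkowski inequality for $V_j$. Its equality case for full-dimensional bodies gives homothety.
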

    \begin{proof}
        We may assume that $K$ and $L$ are full-dimensional; the general inequality follows from this by approximation.
        Note that the functional $\mathcal{F}:\K^n\to\R$,
        $\mathcal{F}(K)=V_i(\MSO_k K)^{\frac{1}{ij}}$ is one-homogeneous and log-concave by \cref{eq:B-M_mean_sec}. We set
        \begin{align*}
             \tilde{K} = \frac{1}{\mathcal{F}(K)}K,
             \qquad \tilde{L}= \frac{1}{\mathcal{F}(L)} L, 
             \qquad\text{and}\qquad \tilde{\lambda} = \frac{\lambda \mathcal{F}(L)}{(1-\lambda) \mathcal{F}(K) + \lambda \mathcal{F}(L) }.
        \end{align*}
        Then the log-concavity of $\mathcal{F}$ entails that
        \begin{align*}
            \frac{\mathcal{F}((1-\lambda)K+\lambda L)}{(1-\lambda)\mathcal{F}(K) + \lambda\mathcal{F}(L)}
            = \mathcal{F}((1-\tilde{\lambda})\tilde{K} + \tilde{\lambda}\tilde{L})
            \geq \mathcal{F}(\tilde{K})^{1-\tilde{\lambda}}\mathcal{F}(\tilde{L})^{1-\tilde{\lambda}}
            = 1,
        \end{align*}
        yielding inequality~\cref{eq:B-M_mean_sec:additive}.

        If $K$ and $L$ are homothetic, we have equality in~\cref{eq:B-M_mean_sec:additive}. Conversely, if equality holds in~\cref{eq:B-M_mean_sec:additive}, then equality is attained in \cref{eq:B-M_mean_sec} by $\tilde{K}$, $\tilde{L}$, and $\tilde{\lambda}$. The associated equality condition states that $\tilde{K}$ and $\tilde{L}$ are translates, and thus, $K$ and $L$ are homothetic.
    \end{proof}

    We conclude this section with two remarks: Firstly, we point out that the family of inequalities~\cref{eq:B-M_mean_sec} does not follow trivially from a set inclusion that the definition of the mean section operators might suggest. Secondly, we note that our argument can be extended to also involve Minkowski endomorphisms. 

    \begin{rem}\label{rem:B-M_mean_sec:nontrivial}
        Given two convex bodies $K,L\in \K^n$ and $\lambda \in (0,1)$, it is easy to see that for any affine subspace $E\in\mathrm{AGr}_k(\R^n)$,
        \begin{equation*}
            ((1-\lambda)K + \lambda L) \cap E
            \supseteq (1-\lambda)(K\cap E) + \lambda (L\cap E).
        \end{equation*}
        From the definition~\cref{eq:MSO_defi} of the mean section operator $\MSO_k$ (where we tacitly integrate only over those subspaces intersecting the body in question),
        a naive computation might suggest that $\MSO_k((1-\lambda)K + \lambda L)$ contains the body $(1-\lambda)\MSO_k K + \lambda\MSO_k L$, which would imply inequality \cref{eq:B-M_mean_sec} directly from the monotonicity and log-concavity of intrinsic volumes.
        
        To illustrate that this is in fact very rarely the case, take
        $K=r_1 \BB$ and $L=r_2 \BB$ to be two balls around the origin with different radii $r_1 \neq r_2$. If $2\leq k\leq n-1$, then $\MSO_k$ is homogeneous of degree $j=n-k+1\geq 2$ and the function $[0,\infty)\ni r\mapsto r^{j}$ is strictly convex. Then we have
        \begin{align*}
            &\MSO_k((1-\lambda)K + \lambda L)
            = ((1-\lambda)r_1 + \lambda r_2)^{j} \MSO_k \BB \\
            &\qquad \subsetneq ((1-\lambda)r_1^{j} + \lambda r_2^{j}) \MSO_k \BB 
            = (1-\lambda) \MSO_k K + \lambda \MSO_k L.
        \end{align*}
    \end{rem}
    
    \begin{rem}
        We would like to point out that our argument can be used to obtain a slightly stronger inequality than~\cref{eq:B-M_mean_sec} involving an additional geometric operator. A \emph{Minkowski endomorphism} is a map $\Phi: \K^n\to\K^n$ that is Minkowski additive, continuous in the Hausdorff distance, and commutes with all rigid motions. Moreover, a Minkowski endomorphism $\Phi$ is called \emph{monotone} if $K\subseteq L$ implies $\Phi K\subseteq \Phi L$, and \emph{weakly monotone} if $K\subseteq L$ together with $s(K)=s(K)=0$ implies $\Phi K \subseteq \Phi L$.
        
        Kiderlen~\cite{Kiderlen2006} and Dorrek~\cite{Dorrek2017} showed that they can be described in terms of the spherical convolution: For every Minkowski endomorphism, there exists a unique signed measure $\mu_{\Phi}$ on $[-1,1]$, called the \emph{generating measure} of $\Phi$, such that
        $h_{\Phi K}=h_K\ast \mu_{\Phi}$ for all $K\in\K^n$. For this, one needs to extend definition~\cref{eq:spherical_convol} to a function $f\in C(\S^{n-1})$ and a signed measure $\mu$ on $[-1,1]$.
        
        A Minkowski endomorphism $\Phi$ is monotone if and only if $\mu_{\Phi}$ is a positive measure, and $\Phi$ is weakly monotone if and only if $\mu_{\Phi}$ is \emph{weakly positive}, that is, the sum of a positive measure and a linear function.
        Two less obvious facts due to Kiderlen~\cite{Kiderlen2006} and Dorrek~\cite{Dorrek2017} are that every weakly positive measure generates a Minkowski endomorphism, but not every Minkowski endomorphism is weakly monotone.
        
        If $\Phi:\K^n\to\K^n$ is a weakly monotone Minkowski endomorphism, then one can obtain a similar inequality as~\cref{eq:B-M_mean_sec}, where $\MSO_k$ is replaced by the composition $\Phi\circ \MSO_k$. The essential modification of the proof consists in applying the Brunn--Minkowski type inequality~\cref{eq:B-M-ineq_zeta} to the iterated convolution product
        $\zeta = c_{n,k}(S_{i-1}(M,{}\cdot{}) \ast g_k) \ast \mu$. 
        Following the same argument, one can show that for all $K,L\in\K^n$ and $\lambda\in (0,1)$,
        \begin{equation*}
            V_i(\Phi ( \MSO_k((1-\lambda)K+\lambda L)))
            \geq V_i(\Phi ( \MSO_k K))^{1-\lambda} V_i(\Phi  (\MSO_k L))^{\lambda}.
        \end{equation*}
    \end{rem}

\section*{Acknowledgments}

Firstly, we are obliged to Franz Schuster for presenting us with the problem of Brunn--Minkowski type inequalities involving mean section operators, and for continuously encouraging us to work on it.
We thank Ramon van Handel for the illuminating discussions that nudged us in the right direction.
We would also like to thank Christos Saroglou for giving the impetus to add \cref{rem:B-M_mean_sec:nontrivial}.

The first-named author was supported by the Deutsche Forschungsgesellschaft (DFG), project number \hyperlink{https://gepris.dfg.de/project/520350299}{520350299}. The second-named author was partially supported by grant PID2025-169631NB-I00, funded by the Spanish Ministry of Science, Innovation and Universities and the State Research Agency (MICIU/AEI/10.13039/501100011033).

\bibliography{references}
    
\end{document}